\newcommand{\n}{\noindent}
\newcommand{\vp}{\varepsilon}
\newcommand{\bb}[1]{\mathbb{#1}}
\newcommand{\cl}[1]{\mathcal{#1}}
\theoremstyle{plain}
\newtheorem{thm}{Theorem}[section]
\newtheorem{lem}[thm]{Lemma}
\newtheorem{pro}[thm]{Proposition}
\newtheorem{cor}[thm]{Corollary}
\theoremstyle{definition}
\newtheorem{dfn}[thm]{Definition}
\theoremstyle{remark}
\newtheorem{rem}[thm]{Remark}
\numberwithin{equation}{section}
\def\tilde{\widetilde}
\renewcommand{\tilde}{\widetilde}
 \def\R{\bb  R}
\def\RR{\bb  R}
\def\CC{\bb  C}
\def\KK{\bb  K}
\def\E{\bb  E}
\def\F{\bb  F}
\def\P{\bb  P}
\def\T{\bb  T}
\def\d{\delta}
\def\NN{\bb  N}
\def\N{\bb  N}
\def\RR{\bb  R}
\def\Q{\bb  Q}
\def\PP{\bb  P}
\def\CC{\bb  C}
\def\KK{\bb  K}
\def\RR{\bb  R}
\def\CC{\bb  C}
\def\KK{\bb  K}
\def\E{\bb  E}
\def\F{\bb  F}
\def\P{\bb  P}
\def\T{\bb  T}
\def\d{\delta}
\def\NN{\bb  N}
\def\ZZ{\bb  Z}
\def\Z{\bb  Z}
\def\RR{\bb  R}
\def\PP{\bb  P}
\def\CC{\bb  C}
\def\KK{\bb  K}
\def\FF{\bb  F}
\def\ov{\overline}
\def\phi{\varphi}
\def\ie{{\it  i.e.\  }}
\def\eg{{\it  e.g.}}
\def\n{\noindent}
\def\nl{\nolimits}
\def\tr{\rm  tr}
\def\a{\alpha}
\def\C{\mathscr{C}}
\def\B{\mathscr{B}}
\def\I{\cl  I}
\def\e{\cl  E}
\begin{document}

\title{ The lifting property for $C^*$-algebras : from local to global ?
    }

\author{by\\  
Gilles  Pisier  \\
Texas  A\&M  University\\
College  Station,  TX  77843,  U.  S.  A.}

%%%%%%%%%
\def\C{\mathscr{C}}
\def\B{\mathscr{B}}
\def\I{\cl  I}
\def\e{\cl  E}
\def\G{\bb G}
%%%%%%%

\def\R{\bb R}
\def\RR{\bb R}
 
\def\CC{\bb C}
\def\KK{\bb K}
\def\E{\bb E}
\def\F{\bb F}
\def\P{\bb P}
\def\T{\bb T}

\def\d{\delta}
\def\NN{\bb N}
\def\N{\bb N}
\def\RR{\bb R}
\def\Q{\bb Q}
\def\PP{\bb P}
\def\CC{\bb C}
\def\KK{\bb K}
\def\RR{\bb R}
\def\CC{\bb C}
\def\KK{\bb K}
\def\E{\bb E}
\def\F{\bb F}
\def\P{\bb P}
\def\T{\bb T}

\def\d{\delta}
\def\NN{\bb N}
\def\ZZ{\bb Z}
\def\Z{\bb Z}
\def\RR{\bb R}
\def\PP{\bb P}
\def\CC{\bb C}
\def\KK{\bb K}
\def\FF{\bb F}

\def\v{\varphi}
\def\ov{\overline}
\def\phi{\varphi}
\def\ie{{\it i.e.\ }}
\def\eg{{\it e.g.}}
\def\n{\noindent}
\def\nl{\nolimits}
\def\tr{{\rm tr}}

 \pagenumbering{roman}
 \maketitle
 
\begin{abstract}    This note is motivated by Kirchberg's conjecture
that the local lifting property (LLP) implies
the lifting property (LP) for separable $C^*$-algebras.
The author recently constructed by a ``local" method an  example of 
$C^*$-algebra with the LLP  and the weak expectation property
(WEP) which might be a counterexample.
 We give here several  conditions
 all equivalent to 
  the validity
 of the implication LLP $\Rightarrow$ LP
 for WEP $C^*$-algebras, or equivalently for quotients of WEP
 $C^*$-algebras (QWEP),  that hopefully clarify the 
 nature of the problem.
 But unfortunately
 we cannot decide whether they  
 hold true. These conditions highlight the notion of ``controllable" finite dimensional (f.d.) operator space, in connection with certain $C^*$-tensor products. The latter led us to 
 a closely related variant of  local reflexivity.
 \end{abstract}

 \thispagestyle{empty}

\setcounter{page}{1}

 \pagenumbering{arabic}

 MSC (2010): 46L06, 46L07, 46L09
 
Key words: $C^*$-algebras, von Neumann algebras, lifting property, tensor products

\bigskip

 Let $A,C$ be $C^*$-algebras. Let $I\subset C$ be a closed self-adjoint two-sided ideal so that the quotient $C/I$ is a $C^*$-algebra.
 Assuming $A$ unital, let $S \subset  A$ be an operator system, meaning a self-adjoint subspace
 containing the unit.
 Consider a map $u: S \to C/I$.
 We say that $u$ is liftable if there is a unital completely positive 
 (u.c.p. in short) map 
$ \hat u : S \to C$ lifting $u$. We say that $u$ is locally liftable
if for any finite dimensional (f.d. in short) operator system $E\subset A$
the restriction $u_{|E}$ is liftable.

 In \cite{Kir} Kirchberg defined the lifting property (LP in short) and the local lifting property (LLP in short) for a unital $C^*$-algebra $A$ as follows:\\
 The algebra $A$ has the LP (resp. LLP) if any u.c.p.
 map $u: A \to C/I$ into an arbitrary quotient is liftable (resp. locally liftable).\\
 When $A$ is not unital,    $A$ has the LP (resp. LLP) if its unitization does.
 
  In some sense $C^*(\F_\infty )$ is the fundamental example of LP
  for $C^*$-algebras. It plays an analogous role to
  that of $\ell_1$-spaces in Banach space theory.
  
 In the remarkable last section of his outstanding paper  \cite{Kir}, Kirchberg
  formulated seven equivalent conjectures (labelled as B1,$\cdots$, B7) all equivalent
 to the validity of the Connes embedding problem (labelled as B1)
 which asks whether any finite von Neumann algebra
 on $\ell_2$ embeds in an ultraproduct of matrix algebras.  
 One of these conjectures (namely B3) states that there is a unique $C^*$-norm on $C^*(\F_\infty ) \otimes C^*(\F_\infty )$ where $\F_\infty$ denotes the free group
 with  infinitely countably many generators.
 In the   paper
\cite{JNVWY} (see also \cite{Vid}) a negative solution is proposed, based on the equivalence
of B3 with an equivalent conjecture of Tsirelson developed in quantum information theory. See \cite{P6} for a detailed account of the equivalence of these various conjectures. 

In \cite{Kir} Kirchberg proved that his (equivalent) B-conjectures
 imply his conjecture D which states that for separable $C^*$-algebras
 the LLP implies the LP (in other words they are equivalent).
 The latter conjecture is the motivation for the present paper.
 Although we do not reach a conclusion, we hope that our results
 may advance the situation. 
 The difficulty is that,
 apart from the nuclear $C^*$-algebras, very
 few examples of LLP $C^*$-algebras are known except for $C^*(\F_\infty )$
  and  algebras derived from it. 
However, we recently constructed
 in \cite{155} a new type of non-nuclear LLP $C^*$-algebras, 
 for which it does not seem clear at all that they have the LP;
the failure of all our attempts to either prove or disprove that is what led to the present paper. 
 The special feature of the latter examples is that they have Lance's weak expectation property (WEP in short), which by another of  Kirchberg's results in \cite{Kir} is equivalent to the statement that, if $A$ denotes the $C^*$-algebra,
 there is a unique $C^*$-norm on $A\otimes C^*(\F_\infty )$.
 Kirchberg's   conjecture B3 states that $C^*(\F_\infty )$ has the WEP or equivalently that {\it } any separable  $C^*$-algebra is a quotient of a WEP 
 $C^*$-algebra (these are called QWEP).
 He also proved in \cite{Kir} that $A$ has the LLP if and only if
  there is a unique $C^*$-norm on $A\otimes B(\ell_2)$. Thus the question whether   the algebras we constructed in \cite{155} have the LP
  seemed like a very natural way to tackle Kirchberg's conjecture D.
  Two roads opened to us: \\
  $\bullet$ The most ambitious one would be
  to prove that one of these algebras  (which have LLP and WEP but are not nuclear) fails the LP. This would invalidate conjecture D and hence give an alternate and hopefully much simpler solution to the Connes embedding problem.
 \\
   $\bullet$ A much less ambitious project would be   to prove that
   conjecture D actually  implies (conversely) the (equivalent) conjectures B. In that case
   one would deduce a negative answer for D from a negative
   solution to the Connes embedding problem.
 
 This leaves aside the possibility that conjecture D might hold at least for all
WEP (or all QWEP) $C^*$-algebras in which case our examples in \cite{155}
would have the LP.  
However, if it turns out that
 conjecture D is not valid, exhibiting a WEP $C^*$-algebra with the LP remains
  a quite interesting goal,  considerably strengthening   \cite{155}.

Unfortunately, we have been unable to decide any of these three alternatives.
Nevertheless, we obtained several equivalent forms
of the validity of LLP $\Rightarrow$ LP for WEP (and separable)
 $C^*$-algebras, which seem very close to the goal of 
 at least the less ambitious
 project.
 
 For instance, we show that if a single one of our examples in \cite{155}
 has the LP then all WEP and LLP (separable) $C^*$-algebras also have LP.
 Moreover,  denoting for simplicity
 $$ \C=C^*(\F_\infty)$$
 we prove the equivalence of the following (conjectural) assertions:\\
  {\rm (i)} For WEP (and separable)
 $C^*$-algebras, the LLP implies the LP.\\
{\rm (ii)} There is a completely isometric embedding $f: \C \to \C$ such that the minimal and maximal $C^*$-norms
   coincide on  the { algebraic} tensor product 
   $ f(\C ) \otimes \C $.
\\{\rm (iii)} Any (or some) completely  isometric embedding
$j: \C \to B(\ell_2) $ is such that
the map
 $$Id_{\ell_\infty(\C)} \otimes j   :    \ell_\infty(\C)\otimes     \C \to \ell_\infty(\C) \otimes   B(\ell_2)$$
 is contractive from the minimal tensor product to the maximal one.

We feel (ii) is very close to Kirchberg's B3 conjecture, which asserts that
the min and max norms are equivalent on $\C \otimes \C$. Indeed, the latter is just
(ii) for a surjective $f$.

On the discouraging side, even if there is a   a $*$-homomorphism $f$ 
as in (ii) 
we do not know
   whether   B3 follows.

Our work also leads to some surprising classes of f.d. operator subspaces
of $\C$.
Let us say that a  f.d.  subspace $E\subset \C$ is {\it rigid}
if any completely contractive map $u: E \to \C$ is the restriction to $E$ of a $*$-homomorphism. If $E\subset \C$  the dual operator space
$E^*$ also embeds completely isometrically in $\C$ (and conversely). The spaces 
 that are completely isometric to a space with rigid  dual
are called  {\it controllable}. 
We show that   (i) is equivalent to the assertion that {\it all} 
 f.d.  subspaces $E\subset \C$ are completely isometric to rigid ones (see Lemma \ref{LP4} and Theorem \ref{t1}). Equivalently all subspaces $E\subset \C$ are controllable.
 The controllable  spaces form an interesting class of f.d. operator subspaces of $
 \C$ that probably deserves more investigation.
 It can be shown that if the dual $E^*$ is 1-exact then $E$ is completely isometric to a rigid space but
 at the time of this writing we could not decide whether the latter is true  when $E$ is the
 $n$-dimensional  commutative $C^*$-algebra $\ell_\infty^n$.
  We conjecture it is not,   already for $n=3$ i.e. for $\ell_\infty^3$. Equivalently,
  we conjecture that its operator space dual $\ell_1^3$    is not controllable.
  Actually, the conjecture that $M_k(\ell_1^3)$ is controllable for all $k\ge 1$ is equivalent
  to the validity of the assertion (i) by the linearization
  trick of \cite{93.} (see Remark \ref{r2}).

We will always restrict our considerations of the LP to the separable case.
In the non-separable case very little seems to be known on the LP.
It is apparently open whether $C^*(\F)$ has the LP when $\F$ is an
uncountable free group. In the non-separable case,
it is natural to consider (as e.g. in \cite[\S  7]{157}) the class of $C^*$-algebras $A$
such that all separable $C^*$-subalgebras $B\subset A$
are contained in a larger separable one with LP.

We end the paper with a generalization of the local reflexivity (LR in short) property
on which our previous paper \cite{157} is based.
    \medskip
    
  \n{\bf Notation and   background}
    
      \medskip
    
   \n As often, we abbreviate completely bounded by c.b.,
    completely positive by c.p. and completely contractive by c.c.
    We will also abbreviate operator space (or operator spaces) by o.s.
    and finite dimensional by f.d.
    We abbreviate completely isometric
    and completely isometrically by c.i. \\
    An o.s. is a closed subspace of a $C^*$-algebra or of $B(H)$.
    The duality of o.s. is a consequence of Ruan's characterization
    (see e.g.  \cite{ER}) of the sequences of norms on $(M_n(E))_{n\ge 1}$ (where $M_n(E)$ denotes the space of $n \times n$-matrices with entries in a vector space $E$) that come from 
    an embedding of $E$ into $B(H)$ for some $H$.
Given an o.s. $E \subset B(H)$ with (Banach space sense) dual $E^*$  there is an $\cl H$ and an isometric embedding
$j: E^*\to B(\cl H)$  that induces 
isometric isomorphisms $M_n(j(E^*) )\simeq CB(E,M_n)$ for all $n$.
The embedding $j$ allows one to consider $E^*$ as an o.s.
This is what is called the dual o.s. structure on $E^*$.
We will often refer to it as the o.s. dual of $E$. An important property due independently to
Effros-Ruan and Blecher-Paulsen (see e.g. \cite[p. 42]{ER} or \cite[p. 41]{P4})
is that $E\simeq (E^*)^*$ c.i. for any f.d. $E$.

We denote by $Id_E$ the identity map on $E$.
    
    We reserve the notation $E\otimes F$ for the {\it algebraic} tensor product of two o.s.
    
    Recall that if $E,F$ are completely isomorphic operator spaces, we set
$$d_{cb}(E,F)=\inf \{ \|u\|_{cb} \|u^{-1}\|_{cb} \}$$
where the inf runs over all the complete isomorphisms $u:\ E \to F$.

 \begin{dfn}\label{d10} We will say that an o.s. $X$ locally embeds in an o.s. $Y$ if
  for any f.d. $E\subset X$ and any $\vp>0$ there is $F\subset Y$ such that
  $d_{cb}(E,F) <1+\vp$.
\end{dfn}

We refer to \cite{ER,P4} for more background on operator spaces.

Let   $(D_i)_{i\in I}$ be a   family 
 of $C^*$-algebras. We will denote by $\ell_\infty(\{D_i\mid \in I\})$ or simply by
 $\ell_\infty(\{D_i\})$ the $C^*$-algebra formed of   the   families
    $d=(d_i)_{i\in I}$ in $\prod_{i\in I} D_i$ such that $\sup\nl_{i\in I} \|d_i\|<\infty$, equipped
    with the norm $d\mapsto \sup\nl_{i\in I}\|d_i\|$.

 The following classical fact will be invoked repeatedly.
 \begin{lem}\label{nsf} Let $\F$ be any uncountable free group and let $X\subset C^*(\F)$
 be a separable subspace. There is  a $C^*$-subalgebra $\C'\subset C^*(\F)$
  such that $X \subset \C'$,  the algebra
  $\C'$ is (unitally) isomorphic to $C^*(\F_\infty)$ and there is a unital
  c.p. and contractive projection $P:  C^*(\F)\to \C'$. Consequently,
  for any other $C^*$-algebra $A$ the norm induced on $A\otimes \C'$
  (algebraic tensor product)  
  by $A\otimes_{\max} C^*(\F)$ coincides with the norm in $A\otimes_{\max} \C'$.
 \end{lem}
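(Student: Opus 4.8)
The plan is to realize $\C'$ as the copy of $C^*(\F_\infty)$ sitting over a suitable countable free subgroup, and to produce the projection from a group-theoretic retraction that is available precisely because $\F$ is free. Write $\F$ as the free group on an (uncountable) set $S$. First I would reduce to a countable free subgroup: since $X$ is separable and $\CC[\F]$ is dense in $C^*(\F)$, a countable dense subset of $X$ can be approximated by finite linear combinations of group elements, and the generators from $S$ occurring in these combinations form a countable subset $S_0\subset S$; hence $X$ lies in the closed subalgebra generated by $\langle S_0\rangle$. Enlarging $S_0$ inside the uncountable set $S$, I fix a countably infinite $S_1\supset S_0$ and put $\F'=\langle S_1\rangle$. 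Being generated by a subset of a free basis, $\F'$ is free on $S_1$, so $\F'\cong\F_\infty$, and $X\subset \C':=C^*(\lambda(\F'))$, where $\lambda:\F\to C^*(\F)$ is the universal representation.

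Next I would build the retraction. Because $\F$ is free on $S$, the set map sending $s\mapsto s$ for $s\in S_1$ and $s\mapsto e$ for $s\in S\setminus S_1$ extends to a group homomorphism $r:\F\to\F'$ with $r|_{\F'}=\mathrm{id}$. By the universal property of the full group $C^*$-algebra, $r$ induces a unital $*$-homomorphism $r_*:C^*(\F)\to C^*(\F')$, while $\lambda|_{\F'}$ induces a unital $*$-homomorphism $\tilde\lambda:C^*(\F')\to C^*(\F)$ with range $\C'$. Since $r$ fixes $\F'$ pointwise, $r_*\circ\tilde\lambda=\mathrm{id}_{C^*(\F')}$; in particular $\tilde\lambda$ is isometric, so $\C'\cong C^*(\F')\cong C^*(\F_\infty)$, which gives the first two assertions. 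Setting $P:=\tilde\lambda\circ r_*:C^*(\F)\to C^*(\F)$, one has $P^2=P$ with range $\C'$, and $P$ is unital, contractive and completely positive as a composition of $*$-homomorphisms; this is the required projection.

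Finally the tensor-norm statement follows by a functoriality squeeze. Fix any $C^*$-algebra $A$ and $z\in A\otimes\C'$. Since the maximal tensor norm is functorial for $*$-homomorphisms, both $Id_A\otimes\tilde\lambda:A\otimes_{\max}\C'\to A\otimes_{\max}C^*(\F)$ and $Id_A\otimes r_*:A\otimes_{\max}C^*(\F)\to A\otimes_{\max}\C'$ are contractive, and their composite is $Id_{A\otimes\C'}$ because $r_*\tilde\lambda=\mathrm{id}$. Reading off the two contractions gives $\|z\|_{A\otimes_{\max}\C'}\le\|z\|_{A\otimes_{\max}C^*(\F)}\le\|z\|_{A\otimes_{\max}\C'}$, forcing equality, which is exactly the coincidence of the induced norm with the norm of $A\otimes_{\max}\C'$.

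I expect the only point requiring care is the reduction step: isolating the countable generating set and checking that $\F'$ is genuinely free on $S_1$, so that the identification $\C'\cong C^*(\F_\infty)$ is legitimate. The genuine crux — that freeness supplies a $*$-homomorphic retraction $r_*$, which upgrades a mere completely positive projection to one splitting through $C^*(\F')$ — is immediate once $r$ is written down, and the tensor consequence is then a soft functoriality argument rather than a real obstacle.
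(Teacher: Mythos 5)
Your proof is correct and takes essentially the same route as the paper: the paper also places $X$ inside the $C^*$-subalgebra generated by a countably infinite subset of the free generators (a copy of $C^*(\F_\infty)$) and then invokes the "well-known" conditional expectation onto it, which is precisely the retraction-induced projection $P=\tilde\lambda\circ r_*$ that you construct explicitly. The final tensor-norm claim is, in the paper too, just max-functoriality applied to the u.c.p.\ projection, so your write-up simply supplies the details behind the paper's citations.
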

 \begin{proof} Since $X$ is separable it lies in the closure  $\C'$ of 
 a subgroup of $\F$ generated by a countably infinite set of free generators.
 The latter is a copy of $\F_\infty$. As is well known (see e.g. \cite{P6}) there is a conditional expectation operator   $P:  C^*(\F)\to \C'$.
 \end{proof}
 
 \begin{rem}[On Arveson's principle]
 We will often invoke Arveson's principle. This
  deals with certain  classes   $\cl F(X,C)$ (that we call admissible) formed of maps from a separable
o.s. $X$ to a $C^*$-algebra  $C$. Let $q: C \to Q$ be a surjective $*$-homomorphism
between $C^*$-algebras. A map 
$u: A \to Q$ is called $\cl F$-liftable if there is $\hat u\in \cl F(X,C)$ such that
$u=q \hat u$.
 Arveson's principle  (see \cite[p. 351]{[Ar4]})  says that pointwise limits of $\cl F$-liftable maps are $\cl F$-liftable.
 The typical example  of admissible $\cl F$ is $\cl F(X,C)=B_{CB(X,C)}$.
 Assuming $X$ is a $C^*$-algebra, then $\cl F(X,C)=\{ u \text{ c.c. and c.p. }\}$ is another basic example.
 By definition a class is   admissible if $\cl F(X,C)$ is bounded in norm,  pointwise closed
 and satisfies the following form of non-commutative convexity:
   for any pair $f,g$ in $\cl F$
   and any $\sigma \in C_+$ with $\|\sigma\| \le 1$ 
   the mapping
   $$x\mapsto \sigma^{1/2} f(x)  \sigma^{1/2}+ (1-\sigma)^{1/2} g(x) (1-\sigma)^{1/2} $$
   belongs to $\cl F$. 
   \end{rem}
 
  \begin{rem}[Examples]\label{r10}
  Choi and Effros \cite{[CE5]} proved that all separable nuclear $C^*$-algebras
  have the LP. Later on, Kirchberg \cite{Kiuhf}  proved that $C^*(\F)$
  has the LP when $\F$ is any countable free group. While the basic idea
  to lift homomorphisms from $\F$ to the unitary group of
  a quotient $A/I$   is somewhat  intuitive,
  the  proof for general u.c.p. maps is non trivial. 
  It uses Kasparov's version of the Stinespring dilation for such maps. See \cite{BO}
  for an account of that first proof. A different second proof  
  can be derived from
  \cite{157}, using the criterion appearing in Theorem \ref{t15},
    the description of the unit ball of $E\otimes_{\max} C$
  when $E\subset \C$ is the span of 
  a finite set of free unitary generators (see (7) and (9) in  \cite{93.}), and the linearization trick of
  \cite{93.}. 
  That second proof can be equivalently rewritten  by combining the proof of  \cite[Th. 17]{93.}
  with the criterion involving the nor-tensor product in \cite[Th. 0.13]{157}.
  These two proofs seem to be the only known ones that $C^*(\F)$
  has the LP. 
  \end{rem}
    
  \begin{rem}[Stability under free product]
  In   \cite{93.} we proved that the free product of 
  an arbitrary family of unital $C^*$-algebras $(A_i)_{i\in I}$ with LLP
  also has the LLP. Here and below by ``free product" we mean
  the full (or maximal) unital free product. \\
  Let $A$ be a separable unital $C^*$-algebra and let $q: C^*(\F) \to A$ be a surjective unital $*$-homomorphism. Then
$A$ has the LP if and only if there is a u.c.p. map
$u: A \to C^*(\F)$ lifting $q$. Indeed, if this holds the LP 
will be inherited by $A$ from $C^*(\F)$. 
 Taking the LP of $C^*(\F)$ for granted,  Boca proved in \cite{Boca2} that the LP is stable by full countable free products of unital $C^*$-algebra $(A_i)_{i\in I}$. He uses a simple
  application of his remarkable result on free products of u.c.p. maps
  in \cite{Boca} (see also \cite{Boca3,DK}).
  This can be applied
  to an arbitrary family of unital $C^*$-algebras $(A_i)_{i\in I}$ and
  arbitrary free groups
  with surjective unital $*$-homomorphisms $q_i: C^*(\F_i) \to A_i$ admitting
u.c.p. liftings $u_i: A_i \to C^*(\F_i)$.
Boca's theorem from \cite{Boca} yields a u.c.p. lifting
for the quotient map $\ast_{i\in I} q_i : C^*(\ast_{i\in I} \F_i) \to \ast_{i\in I} A_i$.
In the separable case this shows that the LP is stable by countable free products,
but one first needs to know that $C^*(\F_\infty)$ has the LP
(for which two proofs are described in Remark \ref{r10}).
  \\
  A direct proof (from scratch) can  be derived from the more recent criterion in Theorem \ref{t15}
  and  the linearization trick of 
  \cite{93.}. This is similar to the second proof in Remark \ref{r10}.
  
Given this,  the best way to unify the Choi-Effros and Kirchberg theorems
is perhaps to state that any  
 countable free product of separable unital {\it nuclear} $C^*$-algebras has the LP.

   \end{rem}
 \begin{rem}[Counterexamples] It is natural to say that a discrete group $G$ has the LP
 if $C^*(G)$ has it. 
 In the present remark all groups are assumed {\it countable}.
 By the preceding discussion we know that   free groups and   amenable groups have the LP. Thus it is not  at all obvious to find groups failing the LP or the LLP. As it turns out all the known counterexamples to the LP or the LLP  use Kazhdan's property (T). Ozawa \cite{Ozpams} proved that a continuum  of (T) groups failing the LP 
 exists but without producing an explicit example. Later on    Thom 
  \cite{Thom} produced an explicit example failing the LLP.
  More recently, Ioana, Spaas and Wiersma
 showed that $SL(n,\Z)$ fails the LLP for all $n\ge 3$.
 They also showed that 
  any   infinitely presented group
with property (T) must fail the LP. By \cite{Ozpams}  there is a continuum of such groups.
 It remains an open question whether all discrete
 groups with property (T) fail the LP or even the LLP.
    \end{rem}

 \section{Preliminaries}
 
 We start by recalling the main result from \cite{157} on the LP.

 \begin{thm}\label{t15} Let $\C=C^*(\F_\infty)$.
 A separable $C^*$-algebra $A$ has the LP if and only if
 for any set $I$, any f.d. $E\subset A$ and any 
 family $t=(t_i)_{i\in I}$ in $\C \otimes E$ we have
 \begin{equation}\label{e15}
 \|t\|_{\ell_\infty(I,\C) \otimes_{\max}  A} \le \sup_{i\in I}
 \|t_i\|_{\C \otimes_{\max}  A} .\end{equation}
 For this to hold it suffices to check the case when $I=\N$.\\
 Moreover, \eqref{e15} holds whenever $A=C^*(\F)$ for any
 (not necessarly countable) free group.
 \end{thm}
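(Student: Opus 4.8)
\emph{Reformulation.} The plan is first to turn the LP into a concrete lifting problem. I would assume $A$ unital (replacing it by its unitization) and fix a surjective unital $*$-homomorphism $q:\C\to A$ with kernel $J$; such a $q$ exists because every separable unital $C^*$-algebra is generated by countably many unitaries and hence is a quotient of $\C=C^*(\F_\infty)$. As recalled above, $A$ then has the LP if and only if $q$ admits a u.c.p. lifting $u:A\to\C$, i.e. $qu=Id_A$. I would also record that one half of \eqref{e15} is automatic: each coordinate evaluation $e_i:\ell_\infty(I,\C)\to\C$ is a $*$-homomorphism, so $e_i\otimes Id_A$ is contractive on $\otimes_{\max}$ and $\sup_i\|t_i\|_{\C\otimes_{\max}A}\le\|t\|_{\ell_\infty(I,\C)\otimes_{\max}A}$; thus \eqref{e15} says exactly that the $*$-homomorphism $\prod_i(e_i\otimes Id_A)$ into $\ell_\infty(\{\C\otimes_{\max}A\})$ is isometric on $\ell_\infty(I,\C)\otimes E$.

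\emph{The implication LP $\Rightarrow$ \eqref{e15}.} Here I would argue functorially, reducing everything to the case $A=\C$. Given a u.c.p. lifting $u:A\to\C$ of $q$ and $t\in\ell_\infty(I,\C)\otimes E$, set $s=(Id\otimes u)(t)$. Since $u$ is u.c.p., $Id\otimes u$ is contractive on $\otimes_{\max}$; since $q$ is a $*$-homomorphism, $Id\otimes q$ is contractive and $(Id\otimes q)(s)=(Id\otimes qu)(t)=t$. Hence
\[
\|t\|_{\ell_\infty(I,\C)\otimes_{\max}A}\le\|s\|_{\ell_\infty(I,\C)\otimes_{\max}\C}\le\sup_i\|s_i\|_{\C\otimes_{\max}\C}\le\sup_i\|t_i\|_{\C\otimes_{\max}A},
\]
the middle step being \eqref{e15} for $A=\C$ and the last using contractivity of $Id\otimes u$ on $\C\otimes_{\max}A$. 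So this direction is formal once the ``moreover'' clause is in hand. That clause I would prove directly, with no appeal to the LP: when $E$ is the span of $1$ and finitely many free generators, the unit ball of $E\otimes_{\max}D$ has the explicit, coordinatewise-in-$D$ description of \cite{93.}, which gives \eqref{e15} for $A=\C$ upon taking $D=\ell_\infty(I,\C)$ and $D=\C$; the general f.d. $E\subset\C$ is then reduced to this case by the linearization trick of \cite{93.}. For an arbitrary free group $\F$, any f.d. $E\subset C^*(\F)$ lies in a copy $\C'\cong C^*(\F_\infty)$ carrying a u.c.p. contractive projection, so by Lemma \ref{nsf} the norms of $t$ and of the $t_i$ are unchanged on passing to $\C'$, reducing $A=C^*(\F)$ to $A=\C$.

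\emph{The converse \eqref{e15} $\Rightarrow$ LP.} This is the step I expect to be the crux, and here I would argue by duality. The goal is a u.c.p. lifting $u:A\to\C$ of $q$; fixing an increasing exhaustion $E_1\subset E_2\subset\cdots$ of $A$ by f.d. operator systems and using Arveson's principle, it suffices to produce u.c.p. maps $E_n\to\C$ whose compositions with $q$ approximate the inclusions, in a way uniform enough to admit a pointwise limit. Existence of such approximate local liftings is a convex feasibility problem, and I would show by Hahn--Banach separation — in the duality attached to the maximal tensor product and the cone of c.p. maps into $\C$ — that any failure is witnessed by a family $t=(t_i)_{i\in I}$ with $\|t\|_{\ell_\infty(I,\C)\otimes_{\max}A}>\sup_i\|t_i\|_{\C\otimes_{\max}A}$. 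The index $i$ here runs over the candidate liftings being tested, so the left-hand norm expresses the demand for a \emph{single} map serving all coordinates at once, while the right-hand side records the one-at-a-time bounds that are available; \eqref{e15} then rules out the obstruction and delivers the liftings.

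\emph{Reduction to $I=\N$ and the main obstacle.} This reduction is not a separate combinatorial lemma but falls out of the equivalence cycle: the implication just described uses only $I=\N$ (by separability, countably many tests suffice, indexed by the exhaustion), whereas the functorial argument for LP $\Rightarrow$ \eqref{e15} yields \eqref{e15} for \emph{every} $I$. Hence the $I=\N$ form, the all-$I$ form, and the LP are all equivalent. The hardest part of the whole scheme will be making the separation duality in the converse precise — identifying the separating functional as a genuine element of $\ell_\infty(I,\C)\otimes E$ whose coordinatewise $\C\otimes_{\max}A$-norms are controlled — and then upgrading the resulting approximate local liftings to an honest global u.c.p. lifting via perturbation and Arveson's principle.
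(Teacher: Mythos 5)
A preliminary remark on the comparison itself: this paper never proves Theorem \ref{t15} --- it is quoted as the main result of \cite{157} --- so your attempt can only be measured against the ingredients the paper displays around it. On that score, your first two steps are essentially right and match what the paper indicates. The implication LP $\Rightarrow$ \eqref{e15} via a u.c.p.\ lifting $u$ of a fixed surjection $q:\C\to A$ is correct: $Id\otimes u$ and $Id\otimes q$ are contractive for $\otimes_{\max}$ and $(Id\otimes q)(Id\otimes u)(t)=t$, so everything reduces to \eqref{e15} for $A=\C$. Likewise your plan for the ``moreover'' clause --- the explicit description of the unit ball of $D\otimes_{\max}E$ when $E$ is the span of $1$ and free generators ((7) and (9) in \cite{93.}), the linearization trick, and Lemma \ref{nsf} to pass from $C^*(\F)$ to a copy of $\C$ --- is precisely the route endorsed in Remark \ref{r10}.

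The genuine gap is the converse, \eqref{e15} $\Rightarrow$ LP, which is the whole content of the theorem and which you leave as a declared intention: ``making the separation duality in the converse precise'' is not a step of a proof, it is the proof. What is actually needed (and what \cite{157} supplies) is: (a) from \eqref{e15}, applied to the tautological family indexed by $I=B_{\C\otimes_{\max}E}$, together with the projectivity of $\otimes_{\max}$ (Lemma \ref{02}) and Lemma \ref{nsf}, one extracts a single element $t_E\in B_{\C\otimes_{\max}E}$ controlling all of $B_{\C\otimes_{\max}E}$ --- this is the ``only if'' half of Proposition \ref{LP1}, which does not use the LP; (b) one must then convert this max-controllability into an honest u.c.p.\ lifting of an arbitrary u.c.p.\ map $A\to C/\cl I$, and the known mechanism is not a bare Hahn--Banach separation producing a violating family, but a local-reflexivity principle for the mb/mM norms proved by a bipolar argument (\cite[Th.\ 5.2]{157}, whose generalization is Theorem \ref{t3'} here), combined with the splitting of $q^{**}:C^{**}\to (C/\cl I)^{**}$ to lift into the bidual, Mazur's lemma to pass from weak* to pointwise norm convergence, and only then Arveson's principle (compare Corollary \ref{L21} and Proposition \ref{p17}). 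None of this is present in your sketch, and your heuristic that the index $i$ ``runs over the candidate liftings'' does not by itself produce the required element of $\ell_\infty(I,\C)\otimes E$. Two smaller inaccuracies: the reduction to $I=\N$ rests on the separability of $\C\otimes_{\max}A$ (so that the balls $B_{\C\otimes_{\max}E}$ being controlled have countable dense subsets, cf.\ the density argument in Proposition \ref{31}), not on the f.d.\ exhaustion of $A$; and your reformulation ``$A$ has LP iff $q$ lifts'' invokes Kirchberg's theorem \cite{Kiuhf} that $\C$ has the LP, an external input worth flagging, since Remark \ref{r10} presents Theorem \ref{t15} as yielding an independent proof of that very theorem, and your scheme would be circular if used for that purpose.
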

 
 \begin{rem}   Assume that the Kirchberg conjecture (and the Connes embedding problem) holds. 
 We will show  that LLP $\Rightarrow$ LP for separable $C^*$-algebras
 (which is the same as conjecture (B3) $\Rightarrow$ (D) in  Kirchberg's  \cite{Kir}).
  The conjecture is that     $\C$ has the WEP.
 By a general result $\ell_\infty(\C)$ also has it (see \cite[p. 193]{P6}).
    By Kirchberg's fundamental theorem (see \cite[p. 195]{P6}) we have
    $B\otimes_{\min} C=B\otimes_{\max} C$ whenever $C$ has LLP and $B$ has WEP. Therefore
     $\ell_\infty(\C) \otimes_{\min} C=\ell_\infty(\C) \otimes_{\max} C$.
     Using this last identity, Theorem \ref{t15} shows that $C$ has the LP  if it is separable and has the LLP.
 \end{rem}
  \begin{rem} Note that $\ell_\infty(\C)$ fails the LLP.
  Indeed, by \cite{JP}
  the algebra $\ell_\infty(\{M_n\mid n\ge 1\})$ 
  (which clearly embeds c.i. in $\ell_\infty(\C)$)  does not locally embed in $\C$.
More generally any $C^*$-algebra $C$ that contains c.i. copies of $M_n$ for all $n$ is such that $\ell_\infty(C)$ fails LLP.
  \end{rem}
  In companion to  $\C=C^*(\F_\infty)$, let $\B=B(\ell_2)$.
  We denote by $$j : \C \to \B$$
  an isometric $*$-homomorphism.
  
  Let $A,B,C$ be $C^*$-algebras.
  Let $E\subset A$. We say that a mapping $u: E \to B$
  is $C$-nuclear if
  $$\| id_C \otimes u: C \otimes_{\min} E \to C\otimes_{\max}  B \|=1
 ,$$
  or equivalently if $  \| u \otimes id_C: E \otimes_{\min} C \to B\otimes_{\max}  C \|=1.$
  
  With this terminology, a $C^*$-algebra $A$ is WEP (resp. LLP) if and only if $Id_A$ is $\C$-nuclear
  (resp. $\B$-nuclear). See \cite{P6} for an exposition of these topics.

  \begin{lem}\label{l1} 
  Assume that $B$ satisfies \eqref{e15}  (e.g.  $B=\C$ or $B=C^*(\F)$ for any
   free group $\F$). Then any $\C$-nuclear 
   mapping $u: E \to B$ is  $\ell_\infty(\C)$-nuclear.
 \end{lem}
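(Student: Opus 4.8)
The plan is to prove directly that $\|id_{\ell_\infty(\C)}\otimes u : \ell_\infty(\C)\otimes_{\min} E \to \ell_\infty(\C)\otimes_{\max} B\|\le 1$, the reverse inequality being easy. First I would observe that it suffices to bound a single element $z$ of the algebraic tensor product $\ell_\infty(\C)\otimes E$. Since $z$ is a finite sum $\sum_{k} a_k\otimes e_k$, its $E$-part spans a f.d. subspace $E_0\subset E$, so replacing $E$ by $E_0$ I may assume $E$ is f.d. (all $\max$-norms below being computed in the ambient algebra $B$). Writing $a_k=(a_k(i))_{i\in\N}$ with $a_k\in\ell_\infty(\C)$, I form the slices $z_i=\sum_k a_k(i)\otimes e_k\in\C\otimes E$, so that $z$ corresponds to the bounded family $(z_i)_i$, and $w:=(id_{\ell_\infty(\C)}\otimes u)(z)$ corresponds to the family $(w_i)_i$ with $w_i=(id_{\C}\otimes u)(z_i)\in\C\otimes u(E_0)$, where $u(E_0)\subset B$ is again f.d. and the coefficients $a_k\in\ell_\infty(\C)$ are bounded, so $w\in\ell_\infty(\C)\otimes u(E_0)$ genuinely.

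The heart of the argument is a three-step sandwich. The first step is the classical slice formula for the spatial (minimal) tensor product with an $\ell_\infty$-product,
\[
\|z\|_{\ell_\infty(\C)\otimes_{\min} E}=\sup_i \|z_i\|_{\C\otimes_{\min} E},
\]
obtained by representing $\ell_\infty(\C)$ block-diagonally on $\bigoplus_i H_i$ and $E$ on some $K$, so that $z$ acts block-diagonally with $i$-th block $z_i$ on $H_i\otimes K$. The second step applies the hypothesis that $u$ is $\C$-nuclear slicewise: for each $i$,
\[
\|w_i\|_{\C\otimes_{\max} B}\le \|z_i\|_{\C\otimes_{\min} E}\le \sup_j\|z_j\|_{\C\otimes_{\min} E}=\|z\|_{\ell_\infty(\C)\otimes_{\min} E}.
\]
The third and decisive step invokes the assumption that $B$ satisfies \eqref{e15}: applied to the f.d. subspace $u(E_0)\subset B$ and the family $(w_i)_i$ it gives
\[
\|w\|_{\ell_\infty(\C)\otimes_{\max} B}\le \sup_i\|w_i\|_{\C\otimes_{\max} B}.
\]
Chaining the three inequalities yields $\|w\|_{\ell_\infty(\C)\otimes_{\max} B}\le\|z\|_{\ell_\infty(\C)\otimes_{\min} E}$, the desired bound. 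To upgrade $\le 1$ to equality I would restrict to constant families: a constant sequence has the same $\min$-norm as its value by the slice formula, and composing with an evaluation $*$-homomorphism $\mathrm{ev}_{i_0}\otimes id_B:\ell_\infty(\C)\otimes_{\max} B\to\C\otimes_{\max} B$ recovers the $\C$-nuclearity norm $1$ from below.

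I expect the only genuinely delicate point to be the slice formula of the first step, where one must use that $\otimes_{\min}$ is injective and compatible with $\ell_\infty$-products; everything else is formal once the slices are set up. Conceptually the decisive input is \eqref{e15}: it is precisely the device that lets one pass from slicewise control of the $\max$-norm back up to the $\ell_\infty(\C)\otimes_{\max}$-norm, something the minimal tensor product does for free (step one) but the maximal one does not. Thus the lemma should be read as saying that property \eqref{e15} of $B$ is exactly what converts $\C$-nuclearity of $u$ into $\ell_\infty(\C)$-nuclearity.
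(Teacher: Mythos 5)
Your proof is correct and is essentially the paper's own argument: the paper likewise combines the isometric inclusion $\ell_\infty(\C)\otimes_{\min}E\subset\ell_\infty(\C\otimes_{\min}E)$ (your slice formula), slicewise application of $\C$-nuclearity, and \eqref{e15} giving $\ell_\infty(\C)\otimes_{\max}B\subset\ell_\infty(\C\otimes_{\max}B)$, then restricts to the algebraic tensor product. Your reduction to a f.d.\ subspace $u(E_0)\subset B$ and the closing remark recovering the norm-one normalization are just slightly more explicit renderings of what the paper leaves implicit.
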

   \begin{proof}
 Consider 
  $$Id \otimes u: \ell_\infty(\C) \otimes E \to \ell_\infty(\C)  \otimes  B.$$
  Note that we have an isometric inclusion
  $$\ell_\infty(\C)   \otimes_{\min} E \subset \ell_\infty(\C  \otimes_{\min} E) $$
  while  by \eqref{e15} we have    an isometric inclusion
   $$\ell_\infty(\C)  \otimes_{\max} B \subset \ell_\infty(\C  \otimes_{\max} B) .$$
  If $u$ is $\C$-nuclear,  obviously $Id_\C   \otimes  u$ defines a  contractive map
   $$\ell_\infty(\C  \otimes_{\min} E) \to \ell_\infty(\C  \otimes_{\max} B),$$
  from which the lemma follows by restriction to $\ell_\infty(\C)   \otimes E$.
  \end{proof}
  
 Let $E\subset A$ and $u: E \to B$. We will denote the norm that controls
 $\ell_\infty(\C)$-nuclearity by $\|u\|_{mM}$, i.e. we set
 \begin{equation}\label{mM}
  \|u\|_{mM}=\| Id_{\ell_\infty(\C)} \otimes u: \ell_\infty(\C) \otimes_{\min} E \to 
  \ell_\infty(\C) \otimes_{\max} B\|.\end{equation}
  
  Thus Lemma \ref{l1} tells us that if $B$  satisfies \eqref{e15}
 \begin{equation}\label{mM'}
  \|u\|_{mM}=\| Id_{\C}  \otimes u: \C \otimes_{\min} E \to 
  \C \otimes_{\max} B\|.   \end{equation}
  
 If moreover $A$ has  the WEP (with  $B$ still satisfying \eqref{e15}) then for any $u: A \to B$ we have
 \begin{equation}\label{mM11}
  \|u\|_{mM}=\| Id_{\C}  \otimes u: \C \otimes_{\max} A \to 
  \C \otimes_{\max} B\|.   \end{equation}
  
  \begin{pro}\label{p1} A separable WEP $C^*$-algebra $A$ has the LP
  if and only if $Id_A$ is $\ell_\infty(\C)$-nuclear, or equivalently if and only if
  $$  \|Id_A\|_{mM}=1.$$
  \end{pro}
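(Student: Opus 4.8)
The plan is to prove both implications by combining Theorem~\ref{t15}, which characterizes the LP of a separable $A$ by the inequality \eqref{e15}, with Lemma~\ref{l1} and the fact recalled in the text that the WEP of $A$ amounts to saying that $Id_A$ is $\C$-nuclear, i.e.\ that $\C\otimes_{\min}A=\C\otimes_{\max}A$ isometrically. Throughout one uses that $\|Id_A\|_{mM}\ge 1$ automatically, since the max-norm dominates the min-norm, so in each direction the content is the reverse inequality.

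For ``LP $\Rightarrow\|Id_A\|_{mM}=1$'' I would first invoke Theorem~\ref{t15} to rephrase the LP of $A$ as the statement that $A$ satisfies \eqref{e15}. This is exactly the hypothesis needed to let $A$ itself play the role of the target algebra $B$ in Lemma~\ref{l1}. Since $A$ is WEP, the map $u=Id_A$ is $\C$-nuclear, so Lemma~\ref{l1}, applied with $E=B=A$ and $u=Id_A$, yields at once that $Id_A$ is $\ell_\infty(\C)$-nuclear, which is precisely $\|Id_A\|_{mM}=1$.

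For the converse I would, by Theorem~\ref{t15}, only need to verify \eqref{e15} in the case $I=\N$. So fix a f.d.\ $E\subset A$ and a family $t=(t_i)_{i\in\N}$ in $\C\otimes E$ with $\sup_i\|t_i\|_{\C\otimes_{\max}A}\le 1$. Because $E$ is finite-dimensional, the coordinate functionals $Id_\C\otimes\phi_k$ are uniformly bounded on $\C\otimes_{\min}E$, so $t$ genuinely defines an element of the algebraic tensor product $\ell_\infty(\C)\otimes E\subset\ell_\infty(\C)\otimes A$. Using the WEP of $A$ to replace each coordinate max-norm by the corresponding min-norm, and then the isometric inclusion $\ell_\infty(\C)\otimes_{\min}A\subset\ell_\infty(\C\otimes_{\min}A)$ already used in the proof of Lemma~\ref{l1}, I obtain $\|t\|_{\ell_\infty(\C)\otimes_{\min}A}=\sup_i\|t_i\|_{\C\otimes_{\min}A}=\sup_i\|t_i\|_{\C\otimes_{\max}A}\le 1$. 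Finally the hypothesis $\|Id_A\|_{mM}=1$ says exactly that the min-norm dominates the max-norm on $\ell_\infty(\C)\otimes A$, whence $\|t\|_{\ell_\infty(\C)\otimes_{\max}A}\le\|t\|_{\ell_\infty(\C)\otimes_{\min}A}\le 1$, which is \eqref{e15}.

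The two facts doing the real work are the compatibility of the injective (minimal) tensor norm with $\ell_\infty$-sums and the WEP hypothesis, which is what lets me pass freely between the min- and max-norms on the factor $\C\otimes A$. The step I would watch most carefully is the reduction of the converse: one must check that the family $t$, a priori only coordinatewise bounded in the max-norm, actually lands in the algebraic tensor product $\ell_\infty(\C)\otimes E$ and not merely in the larger $\ell_\infty(\C\otimes A)$. This is where finite-dimensionality of $E$ is essential, and it is really the only delicate bookkeeping step; the rest is an assembly of the isometric inclusions and norm identities already available.
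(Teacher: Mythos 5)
Your proof is correct and takes essentially the same route as the paper's: both directions come down to Theorem \ref{t15} together with the isometric inclusions $\ell_\infty(\C)\otimes_{\min}A\subset\ell_\infty(\C\otimes_{\min}A)$ and (via \eqref{e15}) $\ell_\infty(\C)\otimes_{\max}A\subset\ell_\infty(\C\otimes_{\max}A)$, plus the WEP identification $\C\otimes_{\min}A=\C\otimes_{\max}A$ of the coordinate norms. Your appeal to Lemma \ref{l1} in the LP $\Rightarrow$ $\|Id_A\|_{mM}=1$ direction simply packages the same chain of inclusions that the paper writes out explicitly, and your careful bookkeeping that a coordinatewise max-bounded family lies in the algebraic tensor product $\ell_\infty(\C)\otimes E$ is a point the paper leaves implicit in the interpretation of \eqref{e15}.
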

   \begin{proof} Assume $  \|Id_A\|_{mM}=1.$ A fortiori the pair $(A, \C)$
   is nuclear so $A$ is WEP. In addition 
   we have isometrically
   $$\ell_\infty(\C) \otimes_{\max} A=
   \ell_\infty(\C) \otimes_{\min} A
   \subset \ell_\infty(\C \otimes_{\min} A) =\ell_\infty(\C \otimes_{\max} A),$$
   which means $A$ has the LP by Theorem \ref{t15}.\\
   Conversely, if $A$ has both LP and WEP, then $(A, \C)$ is a nuclear pair
   and 
   $$\ell_\infty(\C) \otimes_{\max} A\subset \ell_\infty(\C \otimes_{\max} A) =\ell_\infty(\C \otimes_{\min} A)$$
   and we also have isometrically
  $$ \ell_\infty(\C) \otimes_{\min} A
   \subset \ell_\infty(\C \otimes_{\min} A) ,$$
  so we conclude that $\ell_\infty(\C) \otimes_{\max} A=\ell_\infty(\C) \otimes_{\min} A$. 
    \end{proof}
  \begin{lem}\label{02}  Let $E\subset A$ be a f.d. subspace of a $C^*$-algebra.
  Let $q: C \to B$ be a surjective $*$-homomorphism.
  Let $\a$ be either $\min$ or $\max$. Assume
  $C \otimes_\a A/ \ker(q) \otimes_\a A \simeq B \otimes_\a A$ (which always holds for $\a=\max$).
  Then $q \otimes Id_E$ is a metric surjection from
  $(C\otimes E, \|\cdot \|_{\a})$  
  onto $(B\otimes E, \|\cdot \|_{\a})$.
  More precisely, this map takes the closed unit ball onto the closed unit ball.
    \end{lem}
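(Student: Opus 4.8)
The plan is to reduce the statement to a single lifting assertion and then produce the lift by compressing a fixed algebraic lift with an approximate unit of $\ker(q)$, chosen so that the compression stays inside the subspace $C\otimes E$.

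\emph{Reduction.} Since $q$ is onto, $q\otimes Id_E:C\otimes E\to B\otimes E$ is algebraically surjective with kernel exactly $\ker(q)\otimes E$; because $\dim E<\infty$ this kernel is already norm closed (it is a copy of $\ker(q)^{\dim E}$ inside $C\otimes E\cong C^{\dim E}$). As the inclusions $C\otimes E\subset C\otimes_\a A$ and $B\otimes E\subset B\otimes_\a A$ are isometric for $\|\cdot\|_\a$ by the very definition of that norm on the tensor products with $E$, and as $q\otimes Id_E$ is the restriction of the contraction $q\otimes Id_A$, the map $q\otimes Id_E$ is automatically contractive. Thus the assertion ``closed ball onto closed ball'' is equivalent to: for every $b\in B\otimes E$ with $\|b\|_\a\le 1$ there is $c\in C\otimes E$ with $(q\otimes Id_E)(c)=b$ and $\|c\|_\a\le 1$.

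\emph{The big quotient map.} Under the hypothesis, $q\otimes Id_A:C\otimes_\a A\to B\otimes_\a A$ is a surjective $*$-homomorphism whose kernel is the closed ideal $\mathcal J:=\overline{\ker(q)\otimes A}$ (for $\a=\max$ this holds unconditionally, as recorded in the statement; for $\a=\min$ it is precisely the assumed identification). Being a quotient $*$-homomorphism between $C^*$-algebras, it carries the closed unit ball of $C\otimes_\a A$ onto that of $B\otimes_\a A$, by the standard attainment property of $C^*$-quotients (lift a positive element and cut it down by the functional calculus $t\mapsto\min(t,1)$). The issue is only that a norm-one big lift need not lie in the subspace $C\otimes E$, and a projection $A\to E$ cannot be used to push it there without paying the (generally $>1$) factor $\|P\|_{cb}$.

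\emph{Compression preserving $C\otimes E$.} Let $J=\ker(q)$ and let $(u_\lambda)$ be an increasing positive approximate unit of the ideal $J\subset C$. The crucial point is that right multiplication by $1-u_\lambda$ on the first leg preserves $C\otimes E$ (unitising $A$ if needed): if $c_1=\sum_k c_k\otimes e_k\in C\otimes E$ is any algebraic lift of $b$, then
$$c_\lambda:=c_1\,(1-u_\lambda\otimes 1)=\sum_k c_k(1-u_\lambda)\otimes e_k\in C\otimes E,$$
since $J$ is an ideal of $C$. As $q(u_\lambda)=0$ we have $q\bigl(c_k(1-u_\lambda)\bigr)=b_k$, so $c_\lambda$ is again a lift of $b$. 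Now $(u_\lambda\otimes 1)$ is an approximate unit of $\mathcal J=\ker(q\otimes Id_A)$ (it is contractive and acts as the identity in the limit on the dense set $J\otimes A$), so the standard quotient-norm formula $\lim_\lambda\|x(1-e_\lambda)\|=\|x+\mathcal J\|$ in the $C^*$-algebra $C\otimes_\a A$ gives
$$\lim_\lambda\|c_\lambda\|_\a=\|(q\otimes Id_A)(c_1)\|_{B\otimes_\a A}=\|b\|_\a.$$
Hence $\inf\{\|c\|_\a:\ c\in C\otimes E,\ (q\otimes Id_E)(c)=b\}=\|b\|_\a$, which already proves that $q\otimes Id_E$ is a metric surjection and yields the isometric identification $(C\otimes E)/(J\otimes E)\cong B\otimes E$.

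\emph{Attainment, and the main obstacle.} The remaining and, I expect, most delicate step is to replace the limit by an exact norm-$\le 1$ lift, i.e. to hit the closed ball rather than only the open one. A bare Banach-space metric surjection does not give this, so the $C^*$-structure must be used once more: one must show that $J\otimes E$ is proximinal in $(C\otimes E,\|\cdot\|_\a)$. Here I would invoke that the closed ideal $\mathcal J=\overline{J\otimes A}$ is an $M$-ideal of $C\otimes_\a A$, hence proximinal there, and combine this with the isometric identification of the previous paragraph (which says $C\otimes E$ is isometrically well positioned relative to $\mathcal J$) together with $\dim E<\infty$ to transfer proximinality to the intersection $J\otimes E=\mathcal J\cap(C\otimes E)$. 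This produces a best approximant $w\in J\otimes E$ of the fixed lift $c_1$, whence $c:=c_1-w\in C\otimes E$ lifts $b$ with $\|c\|_\a=\|b\|_\a\le 1$. The transfer of proximinality from the ideal $\mathcal J$ to the finite-dimensional slice $J\otimes E$ is the point that requires the most care; everything up to the metric surjection is routine once one notices that multiplication by $1-u_\lambda$ on the first leg keeps the element inside $C\otimes E$.
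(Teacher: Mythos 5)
Your reduction and the metric-surjection half are correct, and they follow the same classical route that the paper itself cites for the first assertion (the approximate-unit argument of \cite[\S 4.7]{P6}): right multiplication by $1-u_\lambda\otimes 1$ preserves $C\otimes E$, keeps the element a lift, and the formula $\lim_\lambda\|x(1-e_\lambda)\|=\|x+\mathcal{J}\|$ drives the norm down to $\|b\|_\alpha$. The gap is in the attainment step, which is exactly the part the lemma singles out (``closed unit ball onto closed unit ball''). Your plan --- $\mathcal{J}$ is an M-ideal, hence proximinal, in $C\otimes_\alpha A$, and this ``transfers'' to $J\otimes E=\mathcal{J}\cap(C\otimes E)$ because distances agree and $\dim E<\infty$ --- is not an argument: no mechanism for the transfer is given, and the combination of facts you propose to use (ambient proximinality, equality of distances on the subspace, finite-dimensional data) is insufficient as a general principle. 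Here is a counterexample to that principle: let $Y$ be a Banach space with a norm-nonattaining functional $f\in Y^*$ and $Z=\ker f$, which is a non-proximinal hyperplane; put $X=Y^{**}$ and $\mathcal{J}=Z^{\perp\perp}$. Then $\mathcal{J}$ is weak* closed, hence proximinal in $X$ (bounded minimizing sequences have weak* cluster points and the norm is weak* lower semicontinuous), one has $\mathcal{J}\cap Y=Z$, and $\operatorname{dist}(y,\mathcal{J})=\operatorname{dist}(y,Z)$ for all $y\in Y$ because $Y/Z\to X/\mathcal{J}\simeq (Y/Z)^{**}$ is isometric; yet $Z$ is not proximinal in $Y$. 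The same phenomenon threatens your setting: a best approximant to $c_1$ from $\mathcal{J}$ exists, but nothing you have stated forces one to exist inside the slice $J\otimes E$, and a bounded minimizing sequence in $J\otimes E$ only has weak* cluster points in $J^{**}\otimes E$, not in $J\otimes E$.

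What actually closes this step --- and what the paper's citation of \cite[Lemma A.32]{P6} and \cite[Lemma 2.4.7]{P4} points to --- is Arveson's principle, which the paper recalls in its preliminaries precisely for such uses. Identify tensors in $C\otimes E$ with linear maps $E^*\to C$ and consider the class $\mathcal{F}(E^*,C)$ of maps whose associated tensor lies in the closed unit ball of $(C\otimes E,\|\cdot\|_\alpha)$. This class is admissible: it is bounded and pointwise closed ($E$ is f.d.), and it enjoys the required noncommutative convexity because for $\sigma\in C_+$, $\|\sigma\|\le 1$, the tensor associated with $x\mapsto \sigma^{1/2}f(x)\sigma^{1/2}+(1-\sigma)^{1/2}g(x)(1-\sigma)^{1/2}$ is $(\sigma^{1/2}\otimes 1)\,t_f\,(\sigma^{1/2}\otimes 1)+((1-\sigma)^{1/2}\otimes 1)\,t_g\,((1-\sigma)^{1/2}\otimes 1)$, which again lies in $C\otimes E$ and has $\alpha$-norm at most $\max(\|t_f\|_\alpha,\|t_g\|_\alpha)$ by the row--diagonal--column factorization $\bigl(\sigma^{1/2}\ \ (1-\sigma)^{1/2}\bigr)\,\mathrm{diag}(t_f,t_g)\,\bigl(\sigma^{1/2}\ \ (1-\sigma)^{1/2}\bigr)^{t}$. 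Your own metric-surjection step supplies, for each $n$, an exact lift of $(1+1/n)^{-1}b$ belonging to this unit ball; these maps converge pointwise (in norm, since $E^*$ is f.d.) to the map associated with $b$, so Arveson's principle yields a lift of $b$ of $\alpha$-norm at most $1$. In short: keep your first two sections, but replace the proximinality-transfer paragraph by this Arveson-type limiting argument (or, equivalently, by the quasi-central approximate-unit iteration that proves Arveson's principle); as written, the attainment step is a genuine gap.
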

   \begin{proof} The argument for this is a classical
   one based on the existence of nice approximate units in  ideals
   such as $\ker(q)$,  essentially  going back to Arveson.
   For the first assertion full details can be found in \cite[\S 4.7]{P6}. 
   For the case $\a=\max$, see also \cite[\S 7.2]{P6} on the ``projectivity" of the max-tensor product.
   The second assertion can be proved by the same principle as in
  Lemma A.32 in \cite{P6} or \cite[Lemma 2.4.7]{P4}.
  \end{proof}
  
  \begin{rem}\label{rr2'}  
   If a $C^*$-algebra $C$ has LLP and is QWEP
   then it is WEP.\\
   Indeed, let $q: W \to C$ be the quotient map from a WEP $C^*$-algebra $W$. Since $Id_C$ locally lifts up into $W$, the map
   $Id_C$ is $\C$-nuclear, and hence $C$ has the WEP.
     \end{rem}

Let $E\subset A$ be an operator space
  sitting in a $C^*$-algebra $A$.
  Let $D$ be another $C^*$-algebra. 
  Recall we denote (abusively) by $D  \otimes_{\max }  E$
  the closure of $D  \otimes   E$ in $D\otimes_{\max}  A$, and we denote
  by $\| \  \|_{\max}$ the norm induced
  on  $D  \otimes_{\max }  E$ by $D  \otimes_{\max }  A$.
  We define similarly $E  \otimes_{\max }  D$. 
  We should emphasize that
  $D  \otimes_{\max }  E$ (or $E  \otimes_{\max }  D$) depends on $A$ and on the embedding
  $E\subset A$, but there will be no risk of confusion.
  
  For any linear map $u: E \to C$
  We denote
  $$\|u\|_{ mb}= \{ \| 
  Id_{ \C}\otimes u: \C \otimes_{\max } E \to  \C \otimes_{\max } C\| \}$$
  
  Since any separable $C^*$-algebra is a quotient of $\C$ it is easy to see that
  $$\|u\|_{ mb}= \sup\{ \| 
 u_D: D \otimes_{\max } E \to  D \otimes_{\max } C\| \}$$
 where the sup runs over all possible $C^*$-algebras $D$'s.
  \\ In general \begin{equation}\label{eglo1}\|u\|_{ cb}\le \|u\|_{ mb} .\end{equation}
  However,   if $C$ is WEP,   we clearly have (since $\C\otimes_{\min} C= \C\otimes_{\max} C$)
    \begin{equation}\label{eglo}  \|u\|_{mb}= \|u\|_{cb} \end{equation}
     for any $u:E \to C$.

  We set
  $$MB(E,C)=\{u: E \to C\mid \|u\|_{mb} <\infty\},$$
  and we equip it with the mb-norm.
 
  We recall (see \cite[Th. 7.4 p. 137]{P6})
  that 
  $$\|u\|_{mb}=\inf\{ \|\tilde u: A \to C^{**} \|_{dec} \}$$
  where the inf runs over all $\tilde u: A \to C^{**}$  extending $i_C u: E \to C^{**}$.   \\   
  Here and throughout this note
  $$i_C:C  \to C^{**}$$ denotes the canonical inclusion.\\
  In particular if $C$ is a von Neumann algebra and $E=A$, we have 
  $\|u\|_{mb} = \|u\|_{dec} $.
  
  For any map $u: A \to C$ (defined on the whole of $A$) we have
  $\|i_C u\|_{mb}=\|i_C u\|_{dec}= \|u\|_{mb} \le \|u\|_{dec} $.
  Moreover, if $u$ is c.p. we have $ \|u\|_{dec} =\|u\|  $, and in the unital case this
  is $=\|u(1)\|  $ (see e.g.  \cite{P6}).

  \begin{pro}\label{p17} Assume $A$ separable with LP. 
  Let  $C/\cl I$ be a quotient $C^*$-algebra, let $q: C \to C/\cl I$ denote the quotient map.
  Then 
   any $u\in {MB}(A, C/\cl I)$ admits a lifting
$\hat u: A \to C$ with $\|\hat u\|_{{MB}(A,C)} =\|  u\|_{{MB}(A,C/\cl I)}.$
  \end{pro}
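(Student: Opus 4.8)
The plan is to convert the $mb$-estimate on $u$ into complete positivity on a $2\times2$ amplification, to lift that amplification using the LP of $A$, and to recover $u$ via Arveson's principle. Write $B=C/\cl I$ and, after rescaling, assume $\|u\|_{{MB}(A,B)}=1$. Since $Id_\C\otimes q:\C\otimes_{\max}C\to\C\otimes_{\max}B$ is a contractive $*$-homomorphism, every lift $\hat u$ automatically satisfies $\|u\|_{mb}=\|q\hat u\|_{mb}\le\|\hat u\|_{mb}$; hence it suffices to produce one lift with $\|\hat u\|_{mb}\le1$, and the asserted equality of norms will follow.

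Because $u$ is defined on all of $A$, the recalled identity gives $\|u\|_{mb}=\|i_B u\|_{dec}$ with $i_B u:A\to B^{**}$, and $B^{**}$ is a von Neumann algebra, where $mb$ and $dec$ coincide. By the $2\times2$ description of the decomposable norm into a von Neumann algebra, $\|i_B u\|_{dec}\le1$ means there are completely positive contractions $\theta_1,\theta_2:A\to B^{**}$ making the block map $\begin{pmatrix}\theta_1&i_B u\\(i_B u)^\dagger&\theta_2\end{pmatrix}:M_2(A)\to M_2(B^{**})$ completely positive (with $w^\dagger(x):=w(x^*)^*$); adjusting the diagonal by a state I may take it unital, obtaining a u.c.p. map $\Theta:M_2(A)\to M_2(B)^{**}$ whose $(1,2)$-corner $a\mapsto\Theta(a\otimes e_{12})_{12}$ equals $i_B u$. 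Here I record the key fact, a consequence of the Stinespring dilation (Paulsen's off-diagonal corner principle): the $(1,2)$-corner of an \emph{arbitrary} u.c.p. map $M_2(A)\to M_2(D)$ is always a decomposable contraction into $D$.

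Next I would descend from the bidual and lift. As $A$, hence $M_2(A)$, is separable, there is a sequence of u.c.p. maps $\Theta_n:M_2(A)\to M_2(B)$ with $\Theta_n\to\Theta$ in the point-weak$^*$ topology, since u.c.p. maps into a bidual are point-weak$^*$ approximable by u.c.p. maps into the algebra itself. Because $A$ has the LP, so does $M_2(A)$ (stability of the LP under amplification by the nuclear algebra $M_2$), so each $\Theta_n$ lifts through $Id_{M_2}\otimes q$ to a u.c.p. map $\widetilde\Theta_n:M_2(A)\to M_2(C)$. Let $\hat u_n:A\to C$ be its $(1,2)$-corner: by the corner principle $\|\hat u_n\|_{dec}\le1$, whence $\|\hat u_n\|_{mb}\le1$; and since corner-extraction commutes with the entrywise action of $q$, the map $q\hat u_n$ is the $(1,2)$-corner of $\Theta_n$, so $q\hat u_n\to u$ weakly.

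Finally I would invoke Arveson's principle for the admissible family $\cl F(A,D)=\{v:A\to D\mid\|v\|_{mb}\le1\}$. This family is norm-bounded (as $\|\cdot\|_{cb}\le\|\cdot\|_{mb}$ by \eqref{eglo1}) and pointwise closed (the $mb$-norm is lower semicontinuous, being a supremum over tensors of point-continuous quantities); the non-commutative convexity required for admissibility follows from the bimodule and order behaviour of $\otimes_{\max}$, and is the one genuinely technical verification. Each $q\hat u_n$ is $\cl F$-liftable and converges weakly to $u$; using the convexity of $\cl F$ together with Mazur's lemma (and a diagonal argument over a countable dense subset of the separable $A$) I would replace the $q\hat u_n$ by convex combinations converging to $u$ point-norm while remaining $\cl F$-liftable. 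Arveson's principle then yields $\hat u\in\cl F(A,C)$ with $q\hat u=u$, i.e. $\|\hat u\|_{mb}\le1$, as required. The main obstacle I anticipate is the careful assembly of the bidual-to-algebra approximation with the upgrade from weak to point-norm convergence demanded by Arveson's principle; the amplified LP and the convexity of the $mb$-ball are the supporting ingredients.
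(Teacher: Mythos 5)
Your overall architecture (reduce to $\|u\|_{mb}\le 1$, use $\|u\|_{mb}=\|i_Bu\|_{dec}$ to get a c.p. $2\times 2$ block map into $M_2(B^{**})=M_2(B)^{**}$, lift, extract the $(1,2)$-corner, finish with Mazur and Arveson) is reasonable, and several of its ingredients are correct: the corner principle via Stinespring, the admissibility of the class $\{v:\|v\|_{mb}\le 1\}$, and the initial reduction. But there is a genuine gap at the central step: the assertion that ``u.c.p. maps into a bidual are point-weak$^*$ approximable by u.c.p. maps into the algebra itself'' is \emph{false} as a general principle. If it were true, then your own endgame would prove that \emph{every} separable $C^*$-algebra has the LP: given any u.c.p. $u:X\to C/\cl I$, compose with the normal embedding $(C/\cl I)^{**}\subset C^{**}$ (unitalizing with a state on the complementary summand), approximate point-weak$^*$ by u.c.p. maps $\Phi_i:X\to C$, push down by $q$ to get u.c.p. maps $q\Phi_i\to u$ point-weakly, and apply Mazur plus Arveson's principle for the admissible class of c.p. contractions to obtain a c.p. lifting of $u$. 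This contradicts the counterexamples recalled in the paper (Ozawa's continuum of property (T) groups failing the LP \cite{Ozpams}, and $SL(n,\Z)$, $n\ge 3$, failing even the LLP \cite{[ISW]}). The point is that such bidual-descent statements are local-reflexivity--type statements, which fail for general $C^*$-algebras; in this circle of ideas they are never free. Indeed, in the paper and in \cite{157} the analogous descent (Theorem \ref{t3'}, used exactly this way in Corollary \ref{L21}, and \cite[Th. 5.2]{157}) is not a background fact but is \emph{equivalent} to the LP of $A$ --- it is precisely where the hypothesis enters. Your proposal instead spends the LP only on the easy step of lifting maps that already land in the honest quotient $M_2(B)$, and assumes the hard step for free.

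The step you need is true in your situation, but only because $M_2(A)$ has the LP, and it requires a real argument, for instance: represent $M_2(B)^{**}$ via the universal representation and consider the $C^*$-algebra $\tilde C\subset \ell_\infty\bigl(I;M_2(B)\bigr)$ of bounded nets that converge in the strong$^*$ topology (this is a $C^*$-algebra because multiplication is jointly strong$^*$-continuous on bounded sets); the map ``strong$^*$-limit'' is a surjective $*$-homomorphism $\tilde C\to M_2(B)^{**}$ by Kaplansky density. Applying the LP of $M_2(A)$ to $\Theta$ viewed as a u.c.p. map into this quotient yields a u.c.p. lifting into $\tilde C$, whose coordinates form a net of c.p. contractions $M_2(A)\to M_2(B)$ converging point-strong$^*$ (hence point-weak$^*$) to $\Theta$; one can even combine this quotient with $q$ so that the approximants come already lifted to $M_2(C)$, using the second assertion of Lemma \ref{02}. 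With that insertion your proof closes up and is genuinely different from the paper's, which simply cites \cite[Th. 4.3]{157} (a proof that, like Corollary \ref{L21} here, goes through an mb/mM local reflexivity theorem established by a bipolar/duality argument). As written, however, the proposal's key step is unjustified and, stated as a general fact, wrong.
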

   \begin{proof}  See \cite[Th. 4.3]{157}.
  \end{proof}

  It is worthwhile to recall here that  a WEP $C^*$-algebra
  locally embeds in $\C$
  (in the sense of Definition \ref{d10})  if and only if  it has the LLP (see \cite[Prop. 3.7]{155}).
  
  We will use the following extension property for  
 maps from a subspace of an LLP to a WEP due (in some variant) to Kirchberg. See  \cite[Th. 21.4 and Remark 21.5 p. 360]{P6}
 for a detailed proof.
\begin{pro}\label{ext6} Let $X$ be a separable o.s. that locally embeds in $\C$ and let $A$ be a WEP $C^*$-algebra.
Then, for any  $\vp>0$,  any $u: E \to A$ defined on a
f.d. subspace $E\subset X$
admits an extension $\tilde u: X \to A$ with
$\|\tilde u\|_{cb}\le (1+\vp) \|u\|_{cb}$.
\end{pro}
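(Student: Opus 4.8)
The plan is to produce the extension in two stages: first extend $u$ into the bidual $A^{**}$, and then descend from $A^{**}$ down to $A$. Realize $A\subset B(H)$ through a faithful representation. Since $A$ is WEP, Lance's weak expectation furnishes a u.c.p.\ (hence completely contractive) map $P:B(H)\to A^{**}$ with $P|_A=i_A$, where $i_A:A\to A^{**}$ is the canonical inclusion. For the first stage I would apply the operator space Hahn--Banach theorem (Wittstock--Paulsen): the c.b.\ map $u:E\to A\subset B(H)$ extends to $w:X\to B(H)$ with $\|w\|_{cb}=\|u\|_{cb}$. Setting $v=Pw:X\to A^{**}$ gives $v|_E=Pu=i_Au$ (because $u(E)\subset A$ and $P|_A=i_A$) and $\|v\|_{cb}\le\|u\|_{cb}$.

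It is worth stressing that this first stage uses neither the separability of $X$ nor its local embedding in $\C$: for an arbitrary operator space it already yields an extension into $A^{**}$ with the correct cb-norm. Consequently both hypotheses on $X$ must be spent entirely on the passage from $A^{**}$ to $A$, and this passage is the heart of the matter. One should also notice that this descent cannot be carried out by a naive approximation of $v$. On a f.d.\ $F$ with $E\subset F\subset X$ one has $CB(F,A)^{**}=CB(F,A^{**})$ completely isometrically, so Goldstine's theorem does approximate $v|_F$ by $A$-valued maps of cb-norm $\le\|u\|_{cb}$ --- but only in the point-weak$^*$ topology. A point-norm limit of $A$-valued maps necessarily takes values in the norm-closed subalgebra $A$, so $v$, which genuinely exploits $A^{**}$, cannot be norm-approximated at all; the descent therefore has to construct a new $A$-valued map rather than approximate $v$.

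For that construction I would use the local embedding of $X$ in $\C$ together with the good global properties of $\C$. Since $A$ is WEP we have $\|u\|_{cb}=\|u\|_{mb}$ (the analogue of \eqref{eglo}), so the relevant data is governed by the maximal tensor norm, $\C\otimes_{\min}A=\C\otimes_{\max}A$. The local embedding of the separable $X$ in $\C$ lets one model each f.d.\ piece of $X$, up to $1+\vp$, inside $\C$, and the lifting property of $\C$ (Remark \ref{r10}) then supplies genuine, not merely approximate, $A$-valued solutions of the corresponding f.d.\ extension problems. Finally Arveson's principle, applied along an exhaustion $E\subset F_1\subset F_2\subset\cdots$ with dense union in $X$, assembles these compatible f.d.\ solutions into a single map $\tilde u:X\to A$ with $\tilde u|_E=u$ and $\|\tilde u\|_{cb}\le(1+\vp)\|u\|_{cb}$, the accumulated error being absorbed into the factor $1+\vp$.

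I expect this last step to be the main obstacle. The delicate point is precisely that the bidual-level extension $v$ is produced with no reference to the hypotheses on $X$, so that any limiting procedure faithful to $v$ returns a map valued in $A^{**}$; it is only the interplay of the local embedding of $X$ in $\C$ (through the lifting property of $\C$) with the WEP of $A$ (through the identity $\C\otimes_{\min}A=\C\otimes_{\max}A$) that rules out the bidual and forces a genuine $A$-valued extension, all the while keeping the cb-norm below $(1+\vp)\|u\|_{cb}$ and preserving exact agreement with $u$ on $E$.
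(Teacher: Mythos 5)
The paper itself does not prove this proposition: it is quoted as a theorem due (in some variant) to Kirchberg, with \cite[Th. 21.4 and Remark 21.5 p. 360]{P6} cited for a detailed proof. So your proposal has to stand on its own as a proof of that theorem, and it has a genuine gap. Your first stage (extension into $A^{**}$ via injectivity of $B(H)$ and the weak expectation) is correct, and you rightly note it uses neither hypothesis on $X$; the overall skeleton ``go to $A^{**}$, then descend'' is in fact a reasonable one. But the second stage, which you yourself identify as the heart of the matter, is asserted rather than proved. The sentence ``the lifting property of $\C$ then supplies genuine, not merely approximate, $A$-valued solutions of the corresponding f.d.\ extension problems'' is exactly what needs proof, and no mechanism is given: the LP of $\C$ concerns lifting u.c.p.\ maps defined on $\C$ through a surjective $*$-homomorphism $q:C\to C/\cl I$, whereas in your extension problem there is no quotient map, no map defined on $\C$, and no candidate lifting in sight. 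For the same reason Arveson's principle cannot ``assemble'' the finite-dimensional solutions: it says that pointwise limits of liftable maps are liftable, always relative to a fixed surjection, and with target the non-dual space $A$ a pointwise limit of your f.d.\ solutions exists a priori only point-weak$^*$, i.e.\ in $A^{**}$ --- precisely the problem you set out to avoid. (If each f.d.\ step produced an \emph{exact} $A$-valued extension, plain induction along the exhaustion would suffice and no limiting principle would be needed; producing those exact steps is the whole difficulty.)

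There is also an outright error that masks the crux: the claim that $CB(F,A)^{**}=CB(F,A^{**})$ holds completely isometrically for f.d.\ $F$. This holds as an identification of vector spaces, but the \emph{isometric} identity for all f.d.\ $F$ is precisely local reflexivity of $A$ in the sense of Effros--Haagerup \cite{EH}, which fails for general $C^*$-algebras --- in particular $B(H)$, a WEP algebra covered by the proposition, is not locally reflexive. In general one only has a contraction $CB(F,A)^{**}\to CB(F,A^{**})$, so $\|v_{|F}\|_{CB(F,A^{**})}\le 1$ does not place $v_{|F}$ in the bidual unit ball of $CB(F,A)$, and Goldstine gives nothing. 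Indeed, if your claim were true, your scheme would prove the extension theorem for an \emph{arbitrary} separable o.s.\ $X$, with the hypothesis of local embeddability in $\C$ never used --- a red flag. The known proof supplies a substitute for this failed local reflexivity, and that substitute is exactly where the two hypotheses interact: for f.d.\ $F\subset \C$ and $A$ WEP one has $\|v\|_{cb}=\|v\|_{mb}$ by \eqref{eglo}; by \cite[Th. 7.4 p. 137]{P6} such a $v$ extends to a map $\C\to A^{**}$ with dec-norm close to $\|v\|_{cb}$; the local reflexivity principle adapted to these norms (the mb-version of \cite[Th. 5.2]{157}, cf.\ Theorem \ref{t3'}, Corollary \ref{L21} and Remark \ref{fg} here), whose validity for subspaces of $\C$ rests on the LP of $\C$, brings one back to $A$-valued maps point-weak$^*$; and a Mazur convex-combination argument together with a crude finite-dimensional correction restores exact agreement on the smaller space, after which induction along an exhaustion $E\subset F_1\subset F_2\subset\cdots$ concludes. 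None of this machinery appears in your proposal, so the descent from $A^{**}$ to $A$ --- the actual content of Kirchberg's theorem --- remains unproved.
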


  We will say that a 
  linear map $u: D \to C$ between $C^*$-algebras $D,C$
   locally  decomposably factors
  (resp. locally mb-factors) through
  another one $A$ if  there is a net of  maps
  $u_i: D \to A $, $v_i : A \to C$ with $\|u_i\|_{dec} \|v_i\|_{dec} \le 1$
  (resp.  $\|u_i\|_{mb} \|v_i\|_{mb} \le 1$)
  such that $v_iu_i: D \to C$ tends pointwise to $u$.
  
  \begin{pro}\label{pp1} Let $A$ be   WEP, LLP and such that 
  $\C$ locally embeds in   $A$. Then for any WEP and LLP 
  $C^*$-algebra $C$, the identity map $Id_C$  locally  mb-factors through
    $A$.
  \end{pro}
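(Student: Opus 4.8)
The plan is to produce, for each finite-dimensional $E\subset C$ and each $\vp>0$, a pair of globally defined maps $u\colon C\to A$ and $v\colon A\to C$ whose composition is the identity on $E$ and whose $mb$-norms have product at most $1$; letting $E$ increase and $\vp\to 0$ along the obvious directed set of pairs $(E,\vp)$ then yields the net required by the definition of local $mb$-factorization.

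First I would collapse the local embeddings into a single chain. Since $C$ is WEP and LLP, by \cite[Prop. 3.7]{155} it locally embeds in $\C$; combining this with the hypothesis that $\C$ locally embeds in $A$ and using the transitivity of local embedding (Definition \ref{d10}), I obtain that $C$ locally embeds in $A$. Hence, given f.d. $E\subset C$ and $\vp>0$, there is a f.d. subspace $G\subset A$ and a complete isomorphism $\phi\colon E\to G$ which I may normalize so that $\|\phi\|_{cb}\le 1$ and $\|\phi^{-1}\|_{cb}\le 1+\vp$.

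The core step is to globalize $\phi$ and $\phi^{-1}$ by the Kirchberg extension property (Proposition \ref{ext6}), applied in both directions. Since $C$ is WEP and LLP it locally embeds in $\C$, and $A$ is WEP, so $\phi\colon E\to A$ extends to $u\colon C\to A$ with $\|u\|_{cb}\le(1+\vp)\|\phi\|_{cb}\le 1+\vp$. Symmetrically, since $A$ is WEP and LLP it locally embeds in $\C$, and $C$ is WEP, so $\phi^{-1}\colon G\to C$ extends to $v\colon A\to C$ with $\|v\|_{cb}\le(1+\vp)\|\phi^{-1}\|_{cb}\le(1+\vp)^2$. Because $u$ extends $\phi$ and $v$ extends $\phi^{-1}$, for every $x\in E$ we have $u(x)=\phi(x)\in G$, whence $vu(x)=\phi^{-1}\phi(x)=x$; that is, $vu_{|E}=Id_E$ exactly. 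Now, both targets being WEP, \eqref{eglo} gives $\|u\|_{mb}=\|u\|_{cb}$ and $\|v\|_{mb}=\|v\|_{cb}$, so $\|u\|_{mb}\|v\|_{mb}\le(1+\vp)^3$. Rescaling by $\lambda=(1+\vp)^{-3/2}$ and replacing $u,v$ by $\lambda u,\lambda v$ forces $\|u\|_{mb}\|v\|_{mb}\le 1$ while $vu_{|E}=(1+\vp)^{-3}Id_E\to Id_E$. Indexing these pairs by $(E,\vp)$ produces a net with $vu\to Id_C$ pointwise.

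The main obstacle is precisely this globalization: the definition of local $mb$-factorization demands maps defined on all of $C$ and all of $A$, not merely on finite-dimensional pieces, and it is the two-sided use of the extension property — domain locally embedding in $\C$ (supplied by LLP) and target WEP — that explains why both $A$ and $C$ are assumed WEP and LLP, and why the extra hypothesis that $\C$ locally embeds in $A$ is needed (to route $E\subset C$ through $\C$ into $A$). A secondary technical point is the separability requirement in Proposition \ref{ext6}: for non-separable $C$ or $A$ one must first pass to suitable separable subalgebras (or invoke the non-separable form of the extension theorem), which is consistent with the paper's standing convention of treating the LP in the separable case. The remaining identifications — $mb=cb$ for WEP targets and the harmless rescaling to push the product of norms below $1$ — are routine.
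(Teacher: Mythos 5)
Your proof is correct and is essentially identical to the paper's: both route $E\subset C$ through the local embedding of $C$ into $\C$ (via \cite[Prop. 3.7]{155}) and hence into $A$, apply Proposition \ref{ext6} in both directions to globalize the finite-dimensional isomorphism, use \eqref{eglo} (WEP targets) to convert cb-norms into mb-norms, and pass to a rescaled net indexed by pairs $(E,\vp)$. The separability caveat you raise about Proposition \ref{ext6} applies equally to the paper's own proof, which does not comment on it, so it is not a point of divergence.
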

  \begin{proof}   
  Let $C$ be LLP. Then $C$ locally embeds in $\C$ and hence in $A$.
  Let $E\subset C$ be a f.d. subspace.
  Let $\vp>0$ and let $F\subset A$ such that   $d_{cb}(E,F) <1+\vp$. Let
 $u: E \to F$ such that   $\|u\|_{cb} <1$ and $\|u^{-1}\|_{cb} <1+\vp$.
   By Proposition \ref{ext6}
   and the fact that $A$ and $C$ are WEP (which guarantees
   $\|U\|_{mb}=\|U\|_{cb}$ for maps with range either $A$ or $C$)
   there are maps $U : C \to A$ and $V: A \to C$ respectively extending
    $u $ and   $u^{-1} $
   such that $\|U\|_{mb}<1$ and $\|V\|_{mb}<1+\vp$.
   Then $VU_{|E}= Id_E$.
  Taking for index set the set of pairs $(E,\vp)$
  and setting $u_i=U$ and $v_i= (1+\vp)^{-1} V$ for $i= (E,\vp)$, we obtain
  the announced result.
  \end{proof}

  \begin{rem}\label{rr2} Let $D$ be another $C^*$-algebra.
   If  $Id_C$ locally  decomposably factors (resp. locally mb-factors) through
   $A$, then
   $$A \otimes_{\min} D=A \otimes_{\max}  D \Rightarrow C \otimes_{\min}  D=C \otimes_{\max} D.$$
   This assertion is easy to check 
   using
   the fact that decomposable maps
   are ``tensorizable" for the maximal tensor product (see \cite[p. 138]{P6});
   or more precisely, that any map $u$ between $C^*$-algebras satisfies
   $$\|u\|_{cb}\le \|u\|_{mb} \le  \|u\|_{dec}  .$$
   Moreover, assuming $A$ and $C$ separable,  if $A$ has the LP then so does $C$. This can be checked using Theorem \ref{t15}.
  \end{rem}
 By Remark \ref{rr2} we deduce from the preceding proposition:
 
      \begin{cor}\label{c16} Let $A_1,A_2$ be  $C^*$-algebras with WEP and LLP.
      Assume that $\C$ locally embeds in each of them.
       Then for any $C^*$-algebra $D$
      $$A_1 \otimes_{\min} D=A_1 \otimes_{\max}  D \Leftrightarrow
      A_2 \otimes_{\min} D=A_2 \otimes_{\max}  D .$$
      Moreover, assuming both separable, $A_1$ has the LP
      if and only if so does $A_2$. 
    \end{cor}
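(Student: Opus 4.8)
The plan is to obtain the statement as an immediate consequence of Proposition \ref{pp1} combined with Remark \ref{rr2}, exploiting the fact that the hypotheses are completely symmetric in $A_1$ and $A_2$. Since each of $A_1,A_2$ is WEP, LLP, and admits a local embedding of $\C$, each one simultaneously qualifies as the ``source'' algebra $A$ and as the ``target'' algebra $C$ appearing in Proposition \ref{pp1}. This symmetry is the one thing to keep in mind throughout: it is what permits running the whole argument in both directions.

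First I would fix the roles $A=A_1$ and $C=A_2$. Because $A_1$ is WEP, LLP, and $\C$ locally embeds in $A_1$, while $A_2$ is WEP and LLP, Proposition \ref{pp1} shows that $Id_{A_2}$ locally mb-factors through $A_1$. Remark \ref{rr2} then yields, for any $C^*$-algebra $D$, the implication
$$A_1 \otimes_{\min} D = A_1 \otimes_{\max} D \ \Longrightarrow\ A_2 \otimes_{\min} D = A_2 \otimes_{\max} D.$$
Next I would interchange the roles, taking $A=A_2$ and $C=A_1$, which is legitimate precisely by the symmetry of the hypotheses. The same two results deliver the reverse implication, and combining the two directions produces the asserted equivalence of tensor-norm identities.

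For the ``moreover'' clause I would proceed identically, now additionally invoking separability. Applying Proposition \ref{pp1} with $A=A_1$, $C=A_2$ gives that $Id_{A_2}$ locally mb-factors through $A_1$; the second assertion of Remark \ref{rr2}, which propagates the LP along a local mb-factorization in the separable case, then shows that $A_1$ having the LP forces $A_2$ to have it as well. Swapping $A_1$ and $A_2$ gives the converse, so the two LP statements are equivalent.

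There is no genuine obstacle here: the corollary is essentially a bookkeeping consequence of the two cited results, and all the real work has already been done in Proposition \ref{pp1}. The only points deserving attention are, first, confirming that the symmetry of the hypotheses is exactly what licenses applying Proposition \ref{pp1} in both directions, and second, noting that separability is needed only for the LP half, since the tensor-norm equivalence follows from the part of Remark \ref{rr2} that makes no separability assumption.
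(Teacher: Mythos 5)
Your proposal is correct and is essentially identical to the paper's own proof: the paper derives Corollary \ref{c16} precisely by applying Proposition \ref{pp1} (to get that $Id_{A_2}$ locally mb-factors through $A_1$, and vice versa by the symmetry of the hypotheses) and then invoking the two assertions of Remark \ref{rr2}, the second of which handles the separable LP transfer. Your added observations — that the symmetry licenses both directions and that separability enters only for the LP clause — are accurate readings of exactly how those two cited results are meant to be combined.
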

    
   \begin{pro}\label{3} If there is an $A$ with WEP and LP such that
  $\C$  locally embeds in $A$, then for any QWEP $C^*$-algebra $C$    the implication LLP $\Rightarrow$ LP holds.
  \end{pro}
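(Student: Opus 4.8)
The plan is to chain together three facts already in hand: that QWEP together with LLP forces WEP (Remark \ref{rr2'}), that $Id_C$ then locally $mb$-factors through $A$ (Proposition \ref{pp1}), and that such a local factorization transfers the LP from $A$ down to $C$ (Remark \ref{rr2}). So I fix a separable QWEP $C^*$-algebra $C$ with the LLP (the LP being a separable notion throughout, I also read the hypothesis on $A$ as providing a separable such $A$), and the goal is to deduce that $C$ has the LP. The first step is to observe that $C$ is automatically WEP: writing $C$ as a quotient $q\colon W \to C$ of a WEP algebra $W$, the LLP says $Id_C$ lifts locally into $W$, so $Id_C$ is $\C$-nuclear, which is precisely Remark \ref{rr2'}.

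Next I would line up the hypotheses needed to run Proposition \ref{pp1}. By assumption $A$ is WEP and $\C$ locally embeds in $A$; moreover $A$ has the LP, hence a fortiori the LLP (any liftable map is locally liftable). On the other side $C$ is WEP (just established) and LLP (by hypothesis). These are exactly the inputs of Proposition \ref{pp1}, so I conclude that $Id_C$ locally $mb$-factors through $A$: for each f.d. $E \subset C$ and each $\vp>0$ there are maps $U\colon C \to A$ and $V\colon A \to C$ restricting to $Id_E$ on $E$, with $\|U\|_{mb}\|V\|_{mb}$ close to $1$. Internally this step rests on the local embedding of $C$ into $\C$ (valid for WEP $+$ LLP algebras) composed with the embedding of $\C$ into $A$, together with the Kirchberg-type extension property of Proposition \ref{ext6}, but all of that is packaged already.

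Finally I would invoke Remark \ref{rr2}: since $A$ and $C$ are separable and $A$ has the LP, the local $mb$-factorization of $Id_C$ through $A$ yields the LP for $C$. Concretely this is a verification through the tensor criterion of Theorem \ref{t15} --- the maps $U,V$ push the relevant $\ell_\infty(I,\C)\otimes_{\max}$ estimates from $A$ onto $C$ one finite-dimensional $E$ at a time --- so no new estimate is required beyond the ones already assembled. The only genuine point of care is the separability bookkeeping: one must know the factoring maps and the embedding of $\C$ can be confined to a separable part of $A$, which is exactly where the separability of both $A$ and $C$ is used. I expect this packaging to be the only mild obstacle; the substantive work lives inside Proposition \ref{pp1} and Remark \ref{rr2}, whose conclusions I am entitled to use.
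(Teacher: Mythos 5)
Your proof is correct and follows exactly the paper's own route: the paper's proof is the one-line citation of Remark \ref{rr2'} (QWEP $+$ LLP $\Rightarrow$ WEP), Proposition \ref{pp1} (local mb-factorization of $Id_C$ through $A$), and Remark \ref{rr2} (transfer of the LP along such a factorization), which is precisely the chain you assembled. Your added remarks --- that LP implies LLP so $A$ meets the hypotheses of Proposition \ref{pp1}, and the separability reading of the hypotheses --- are accurate and consistent with the paper's conventions.
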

  \begin{proof} This follows immediately from Proposition \ref{pp1}
   and Remarks \ref{rr2} and \ref{rr2'}.
     \end{proof}

     \begin{lem}  Let $C,D$ be   $C^*$-algebras.
   Let $I$ be any index set.\\
  If $D$ is separable then for any 
  $s\in \ell_\infty(I;D) \otimes C$ we have
  \begin{equation}\label{ee1} \|s\|_{\max}=\sup \|(p_J \otimes Id_C) s\|_{ \ell_\infty(J;D) \otimes_{\max} C}\end{equation}
  where the sup runs over all countable subsets $J\subset I$ and $p_J: \ell_\infty(I;D)\to \ell_\infty(J;D) $ denotes the canonical    map of restriction to $J$.
  \end{lem}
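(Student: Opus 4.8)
The plan is to establish the nontrivial inequality
$\|s\|_{\max}\le \sup_J \|(p_J\otimes Id_C)s\|_{\ell_\infty(J;D)\otimes_{\max}C}$,
since the reverse is immediate: each restriction $p_J:\ell_\infty(I;D)\to\ell_\infty(J;D)$ is a (surjective) $*$-homomorphism, so $p_J\otimes Id_C$ is contractive from $\ell_\infty(I;D)\otimes_{\max}C$ onto $\ell_\infty(J;D)\otimes_{\max}C$, whence $\|(p_J\otimes Id_C)s\|_{\max}\le\|s\|_{\max}$ for every countable $J$. I would fix a finite representation $s=\sum_{k=1}^n a_k\otimes c_k$, with $a_k=(a_k(i))_{i\in I}\in\ell_\infty(I;D)$ and $c_k\in C$, and set $K=\sum_{k=1}^n\|c_k\|$.

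The separability of $D$ enters through a reduction of the index set. Consider the single map $a\colon I\to D^n$, $a(i)=(a_1(i),\dots,a_n(i))$. Since $D$, hence $D^n$, is separable, the subset $a(I)\subset D^n$ is separable, so there is a \emph{countable} set $I_0\subset I$ such that $\{a(i):i\in I_0\}$ is dense in $\{a(i):i\in I\}$ for the (say, maximum) norm on $D^n$. I would fix this $I_0$ once and for all and prove that already $\|(p_{I_0}\otimes Id_C)s\|_{\max}=\|s\|_{\max}$, which clearly suffices.

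To compare the two, fix $\delta>0$ and, using density, choose a selection $r\colon I\to I_0$ (with $r|_{I_0}=\mathrm{id}$) such that $\max_k\|a_k(i)-a_k(r(i))\|<\delta$ for all $i$. The crucial observation is that pullback along the set map $r$, namely $\phi_r\colon\ell_\infty(I_0;D)\to\ell_\infty(I;D)$ with $(\phi_r b)_i=b_{r(i)}$, is a contractive $*$-homomorphism (it is simply composition with $r$ in each coordinate). Hence $\phi_r\otimes Id_C$ is contractive on the maximal tensor products, and the element $s':=(\phi_r\otimes Id_C)(p_{I_0}\otimes Id_C)s=\sum_k a_k'\otimes c_k$, where $a_k'(i)=a_k(r(i))$, satisfies $\|s'\|_{\max}\le\|(p_{I_0}\otimes Id_C)s\|_{\max}$. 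By the choice of $r$ we have $\|a_k-a_k'\|_{\ell_\infty(I;D)}=\sup_i\|a_k(i)-a_k(r(i))\|\le\delta$, and since the maximal $C^*$-norm is a cross-norm, $\|s-s'\|_{\max}\le\sum_k\|a_k-a_k'\|\,\|c_k\|\le\delta K$. Therefore $\|s\|_{\max}\le\|s'\|_{\max}+\delta K\le\|(p_{I_0}\otimes Id_C)s\|_{\max}+\delta K$; letting $\delta\to0$ with $I_0$ fixed gives $\|s\|_{\max}\le\|(p_{I_0}\otimes Id_C)s\|_{\max}$, as desired.

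The conceptual heart of the argument, and the only place that needs thought, is the middle step: recognizing that the maximal tensor norm of $s$ depends only on the \emph{set of value-patterns} $\{a(i):i\in I\}$, and that replacing each coordinate by a nearby representative is implemented by an honest $*$-homomorphism $\phi_r$, so the whole passage to $I_0$ is norm-decreasing up to a $\delta$-perturbation controlled by the cross-norm. Everything else, the easy inequality and the perturbation estimate, is routine. I would expect the main subtlety to be purely one of formulation, ensuring that $\phi_r$ and $p_{I_0}$ are manifestly $*$-homomorphisms so that their behaviour on $\otimes_{\max}$ is automatic; note in particular that no appeal to representations ``living at infinity'' (on the corona $\ell_\infty(I;D)/c_0(I;D)$) is needed, precisely because the reduction is carried out at the level of the generating elements $a_k$ rather than of a fixed representation.
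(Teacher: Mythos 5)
Your proof is correct and takes essentially the same route as the paper's: both use the separability of $D$ to produce a countable $J\subset I$ together with a selection map $r\colon I\to J$ under which the first-leg elements move by at most $\varepsilon$, observe that the pullback $\phi_r$ (the paper's $f_J$, built there from a countable partition of $D$ into small-diameter sets) is a $*$-homomorphism and hence max-contractive, and conclude by the same cross-norm perturbation estimate. The only cosmetic difference is that you fix a single countable $I_0$ (via a dense set of value-patterns in $D^n$) and let only $r$ depend on $\delta$, whereas the paper lets $J$ vary with $\varepsilon$ and handles finitely many first legs by refining partitions rather than by passing to $D^n$.
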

  
   \begin{proof} 
  Fix $\vp>0$. Let $x\in \ell_\infty(I;D)$ viewed as a bounded function $x: I \to D$.
  Let $\{B_n\mid n\in \NN\}$ be a covering of 
    $D$ 
  by disjoint sets of diameter $<\vp$. 
  Note that the disjoint sets $I^x_n =x^{-1} (B_n)$  ($n\in \N$) cover $I$.
  We choose and  fix $i_n\in I^x_n$. Let $J=\{i_n\}$. Note
  that $J$ depends on $x$.
  Let $f_J: \ell_\infty(J;D) \to \ell_\infty(I;D)$ be the embedding defined by
  $$f_J(y)= \sum\nl_n 1_{I^x_n} y(i_n).$$
  Clearly 
  $  \| f_J p_J (x) - x\| \le \vp$.
  We now apply the same idea with $x$ replaced by a finite set
  $\{x\mid x\in S\}$.
  We replace   $\{I^x_n\mid n\in \N\}$ by the still countable partition
  $$ \{  \cap_{x\in S } I^x_{n(x)} \mid n  \in \N^S\}.$$
  With the obvious analogue of $f_J$ we now have 
  $$ \forall x\in S\quad  \| f_J p_J (x) - x\| \le \vp.$$
   Let $s\in \ell_\infty(I;D) \otimes C$.
   By what precedes we can find for any $\vp>0$ 
  a countable $J$ such that 
  $$\|( f_J p_J  \otimes Id_C)(s) -s \|_{\max} \le \vp.$$
  Therefore
  $$\|s \|_{\max} \le \|( f_J p_J  \otimes Id_C)(s)   \|_{\max} + \vp
  \le \|(  p_J  \otimes Id_C)(s)  \|_{\max} + \vp.$$
  This proves \eqref{ee1}.
     \end{proof}

     \begin{pro}\label{pp2}  Let $X$ be an   o.s. and  $C,D$  $C^*$-algebras.
  If    a linear map $u: X \to C$ is $\ell_\infty(D)$-nuclear with $D$ separable
  then it is $\ell_\infty(I;D)$-nuclear for any index set $I$. 
  \end{pro}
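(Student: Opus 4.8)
The plan is to unwind the definition and reduce everything, via the preceding Lemma, to the countable case that is already assumed. Recall that $u$ being $\ell_\infty(I;D)$-nuclear means that for every $t$ in the algebraic tensor product $\ell_\infty(I;D)\otimes X$ one has $\|(Id\otimes u)(t)\|_{\ell_\infty(I;D)\otimes_{\max}C}\le\|t\|_{\ell_\infty(I;D)\otimes_{\min}X}$ (the reverse inequality, which pins the norm at exactly $1$, is immediate from the assumed countable case by embedding a single countable block, so I will concentrate on this inequality). The first move is to record an intermediate reduction: the hypothesis that $u$ is $\ell_\infty(D)$-nuclear, i.e. $\ell_\infty(\N;D)$-nuclear, already forces $u$ to be $\ell_\infty(J;D)$-nuclear for \emph{every} countable index set $J$.

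For that intermediate step I would identify $J$ with a subset of $\N$ and introduce the coordinate embedding $\iota:\ell_\infty(J;D)\to\ell_\infty(\N;D)$ (extension by $0$) and the coordinate restriction $\rho:\ell_\infty(\N;D)\to\ell_\infty(J;D)$, both $*$-homomorphisms, satisfying $\rho\iota=Id$. Given $s\in\ell_\infty(J;D)\otimes X$, injectivity of the minimal tensor product gives $\|(\iota\otimes Id)(s)\|_{\min}=\|s\|_{\min}$; applying the assumed $\ell_\infty(\N;D)$-nuclearity to $(\iota\otimes Id)(s)$ and then the $\max$-contractive map $\rho\otimes Id_C$ (using $\rho\iota=Id$ and $(Id\otimes u)(\iota\otimes Id)=(\iota\otimes Id)(Id\otimes u)$) returns $\|(Id\otimes u)(s)\|_{\ell_\infty(J;D)\otimes_{\max}C}\le\|s\|_{\ell_\infty(J;D)\otimes_{\min}X}$, which is exactly $\ell_\infty(J;D)$-nuclearity.

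With this in hand the main assembly uses the preceding Lemma. Given $t\in\ell_\infty(I;D)\otimes X$, apply that Lemma to $s=(Id\otimes u)(t)\in\ell_\infty(I;D)\otimes C$ to obtain $\|(Id\otimes u)(t)\|_{\max}=\sup_J\|(p_J\otimes Id_C)(Id\otimes u)(t)\|_{\ell_\infty(J;D)\otimes_{\max}C}$, the supremum over countable $J\subset I$. Since $p_J\otimes Id_C$ commutes with $Id\otimes u$ on elementary tensors, each term equals $\|(Id\otimes u)(p_J\otimes Id_X)(t)\|_{\ell_\infty(J;D)\otimes_{\max}C}$, which by the countable nuclearity just established is at most $\|(p_J\otimes Id_X)(t)\|_{\ell_\infty(J;D)\otimes_{\min}X}$. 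Finally $p_J$ is a surjective $*$-homomorphism, hence completely contractive, so $p_J\otimes Id_X$ is contractive for the minimal norm and this last quantity is $\le\|t\|_{\ell_\infty(I;D)\otimes_{\min}X}$. Taking the supremum over $J$ yields the required inequality.

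The main obstacle to keep in view is the non-injectivity of the maximal tensor product: one cannot simply regard $\ell_\infty(J;D)\otimes_{\max}C$ as sitting isometrically inside $\ell_\infty(I;D)\otimes_{\max}C$, so every passage between index sets must be routed through maps rather than inclusions. This is precisely what the preceding Lemma handles for the reduction from $I$ to countable $J$, and what the conditional-expectation identity $\rho\iota=Id$ handles for the reduction from $J$ to $\N$; in both places the two facts doing the work are that $*$-homomorphisms are contractive on the maximal tensor product while the minimal tensor product is injective. The only remaining care is the routine bookkeeping that $p_J\otimes Id$ and $Id\otimes u$ commute.
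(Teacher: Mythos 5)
Your proof is correct and is exactly the argument the paper intends: the paper's proof consists of the single line ``This is immediate by \eqref{ee1}'', and what you have written is precisely that reduction spelled out --- apply the preceding Lemma to $(Id\otimes u)(t)$, commute $p_J\otimes Id$ past $Id\otimes u$, and invoke the countable case, with the (correct) extra observation that $\ell_\infty(J;D)$-nuclearity for countable $J$ follows from the hypothesis via the $*$-homomorphisms $\iota,\rho$ with $\rho\iota=Id$. No discrepancy with the paper's approach.
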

  
   \begin{proof}  
    This is immediate by \eqref{ee1}.
     \end{proof}
     \begin{rem}
     By Proposition \ref{pp2} we may replace ${\ell_\infty(\C)}$ by ${\ell_\infty(I;\C)}$
for any infinite set $I$ in \eqref{mM}.
\end{rem}

\section{Controllable operator spaces}

To abbreviate, we introduce
a new definition.
\begin{dfn} 
Let $\a$ be a $C^*$-norm on $\C \otimes A$ (e.g. $\a=\min$ or $\a=\max$).
 A f.d. subspace $E\subset A$ will be called $\a$-controllable 
if there is $t_E\in B_{\C \otimes_{\a} E} $ such that for any $t\in B_{\C \otimes_{\a} E} $ there is a unital $*$-homomorphism
 $\pi: \C \to \C$ such that $t=[\pi\otimes Id_E]( t_E)$.

\end{dfn}

     The next result was stated in \cite[Th. 10.5]{157}.
We include the proof for convenience.

 \begin{pro}\label{LP1} A    (resp. separable) $C^*$-algebra $A$  
 satisfies \eqref{e15} (resp. has the LP)    if and only if  
 any f.d. subspace $E\subset A$   is $\max$-controllable.  \end{pro}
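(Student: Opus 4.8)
The plan is to first observe that the two versions are really one: by Theorem~\ref{t15} the LP of a separable $A$ is equivalent to the validity of \eqref{e15}, and $\max$-controllability of a f.d. $E\subset A$ only involves the separable space $\C\otimes_{\max}E$, so I would reduce everything to proving that \emph{$A$ satisfies \eqref{e15} if and only if every f.d. $E\subset A$ is $\max$-controllable}. For the easy direction ($\max$-controllable $\Rightarrow$ \eqref{e15}) I would fix an index set $I$, a f.d. $E\subset A$ and a family $t=(t_i)_{i\in I}$ in $\C\otimes E$, and after rescaling assume $\sup_i\|t_i\|_{\C\otimes_{\max}A}\le 1$, so each $t_i\in B_{\C\otimes_{\max}E}$. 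Taking a controlling element $t_E\in B_{\C\otimes_{\max}E}$ and unital $*$-homomorphisms $\pi_i:\C\to\C$ with $t_i=[\pi_i\otimes Id_E](t_E)$, I would assemble them into a single unital $*$-homomorphism $\Pi=(\pi_i)_{i\in I}:\C\to\ell_\infty(I,\C)$ (well defined since $\sup_i\|\pi_i(c)\|\le\|c\|$), for which $[\Pi\otimes Id_E](t_E)=t$. As $\Pi$ is a $*$-homomorphism, $\Pi\otimes Id_A$ is contractive from $\C\otimes_{\max}A$ to $\ell_\infty(I,\C)\otimes_{\max}A$, whence $\|t\|_{\ell_\infty(I,\C)\otimes_{\max}A}\le\|t_E\|_{\C\otimes_{\max}A}\le 1$, which is exactly \eqref{e15}.

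The substantive direction is \eqref{e15} $\Rightarrow$ $\max$-controllable. Fix a f.d. $E\subset A$ with basis $(e_k)_{k\le d}$. Since $\C$ and $E$ are separable, $\C\otimes_{\max}E$ is a separable Banach space, so I would pick a sequence $(t_n)_{n\in\N}$ in the algebraic tensor product that is dense in $B_{\C\otimes_{\max}E}$, say $t_n=\sum_k c_{k,n}\otimes e_k$ with $c_{k,n}\in\C$. Applying \eqref{e15} with $I=\N$, the single element $\theta=(t_n)_n=\sum_k \mathbf c_k\otimes e_k$, where $\mathbf c_k=(c_{k,n})_n\in\ell_\infty(\N;\C)$, satisfies $\|\theta\|_{\ell_\infty(\N;\C)\otimes_{\max}A}\le 1$. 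Let $\mathcal D\subset\ell_\infty(\N;\C)$ be the separable unital $C^*$-subalgebra generated by the $\mathbf c_k$, and let $q:\C\twoheadrightarrow\mathcal D$ be a surjective unital $*$-homomorphism (every separable $C^*$-algebra is a quotient of $\C$). Provided one knows that $\|\theta\|_{\mathcal D\otimes_{\max}A}\le 1$, Lemma~\ref{02} (metric surjectivity of $q\otimes Id_E$, closed ball onto closed ball) yields $t_E\in B_{\C\otimes_{\max}E}$ with $[q\otimes Id_E](t_E)=\theta$. Writing $P_n:\ell_\infty(\N;\C)\to\C$ for the $n$-th coordinate $*$-homomorphism and setting $\pi_n:=P_n\circ q:\C\to\C$, one gets $[\pi_n\otimes Id_E](t_E)=[P_n\otimes Id_E](\theta)=t_n$, so $t_E$ controls the dense family $(t_n)$.

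Two points remain, and they carry the real difficulty. The first, which I expect to be the main obstacle, is the \emph{norm control in the descent}: one must ensure $\|\theta\|_{\mathcal D\otimes_{\max}A}\le\sup_n\|t_n\|_{\C\otimes_{\max}A}\ (\le 1)$. The reverse inequality is automatic via the $P_n$, but since the maximal tensor product is \emph{not} injective, passing from $\ell_\infty(\N;\C)$ to the subalgebra $\mathcal D$ can a priori inflate the norm. Equality here amounts to realizing $\mathcal D$ as the range of a conditional expectation of $\ell_\infty(\N;\C)$, so that $\mathcal D\otimes_{\max}A\hookrightarrow\ell_\infty(\N;\C)\otimes_{\max}A$ isometrically; this is precisely the complemented-copy phenomenon underlying Lemma~\ref{nsf}, and reconciling the separability of $\mathcal D$ with the existence of such a projection is the delicate heart of the matter.

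The second point is the \emph{passage from the dense family to the whole ball}: for an arbitrary $t\in B_{\C\otimes_{\max}E}$ the definition demands a single unital $*$-homomorphism $\pi$ with $t=[\pi\otimes Id_E](t_E)$, and a naive limiting argument fails because point-weak$^*$ limits of $*$-homomorphisms are merely u.c.p. This is exactly the setting of Arveson's principle, which, combined with the metric-surjectivity (closed ball onto closed ball) of Lemma~\ref{02}, I would use to upgrade density of the orbit $\{[\pi\otimes Id_E](t_E)\}$ to exact surjectivity onto $B_{\C\otimes_{\max}E}$. Once both points are secured, the separable LP-statement follows by combining the established equivalence with Theorem~\ref{t15}.
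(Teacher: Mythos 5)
Your reduction of the separable case to \eqref{e15} via Theorem \ref{t15} and your ``easy'' direction (controllable $\Rightarrow$ \eqref{e15}) are correct and coincide with the paper's argument. The trouble is in the converse, and both difficulties you flag are genuine, but neither of your proposed fixes works; both are artifacts of your decision to index over a countable dense sequence rather than over the whole unit ball. Take the first one, the norm descent. There is in general \emph{no} conditional expectation from $\ell_\infty(\N;\C)$ onto the separable subalgebra $\mathcal{D}$ generated by the coefficients of $\theta$: being the range of a conditional expectation is a very restrictive property, and Lemma \ref{nsf} supplies it only for the special subalgebras of $C^*(\F)$ generated by a subset of the free generators, not for arbitrary separable subalgebras of an $\ell_\infty$-product. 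Since the max tensor product is not injective, the inequality $\|\theta\|_{\mathcal{D}\otimes_{\max}A}\le 1$ is precisely what you cannot obtain along this route. The paper performs the two moves in the opposite order: it first \emph{lifts} along a surjection $q\colon C^*(\F)\to\ell_\infty(I,\C)$ ($\F$ a sufficiently big free group) using Lemma \ref{02}, which for $\alpha=\max$ needs no hypothesis and takes closed ball onto closed ball, obtaining $T_E\in B_{C^*(\F)\otimes_{\max}E}$; only then does it \emph{descend}: the coefficients of $T_E$ span a separable set, hence lie in a copy $C'\simeq\C$ generated by countably many free generators, and by Lemma \ref{nsf} this $C'$ is complemented, so the norm induced on $C'\otimes E$ coincides with the intrinsic max norm. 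The controlling homomorphisms are then the compositions of $q|_{C'}$ with the coordinate projections of $\ell_\infty(I,\C)$.

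The second gap is worse: Arveson's principle cannot carry you from the dense orbit to the whole ball. That principle applies to \emph{admissible} classes of maps, which must be stable under the non-commutative convex combinations $x\mapsto \sigma^{1/2}f(x)\sigma^{1/2}+(1-\sigma)^{1/2}g(x)(1-\sigma)^{1/2}$; unital $*$-homomorphisms form no such class, and pointwise weak* limits of $*$-homomorphisms are merely u.c.p.\ maps into $\C^{**}$, whereas the definition of $\max$-controllability demands the exact identity $t=[\pi\otimes Id_E](t_E)$ with $\pi$ an honest unital $*$-homomorphism into $\C$, for \emph{every} $t$ in the ball. The paper removes the need for any approximation by taking as index set $I=B_{\C\otimes_{\max}E}$ itself, with the tautological family $X_T=T$. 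This is legitimate because \eqref{e15} is assumed for \emph{arbitrary} index sets: the clause ``it suffices to check the case $I=\N$'' in Theorem \ref{t15} is for verifying \eqref{e15}, not for applying it. With the whole ball as index set, the ball-onto-ball conclusion of Lemma \ref{02} yields exact controllability of every element simultaneously, and no limiting argument is needed. If you make these two changes (lift first into $C^*(\F)$, and index over the full ball), your argument becomes the paper's proof; as written, both gaps remain open.
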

  
   \begin{proof}  Assume $A$ satisfies  \eqref{e15}.
   Let $I= B_{\C \otimes_{\max} E} $.
   Let $X=(X_T)_{T\in I}\in \ell_\infty(I, \C \otimes_{\max} E)$ be defined by
   $X_T=T$.  Then clearly $\|X\|=1$. By \eqref{e15} 
      we have $\|X\|_{  \ell_\infty(I, \C) \otimes_{\max} E} =1$.
    Let $C=C^*(\F)$ with $\F$ a sufficiently big free group so that
   we have a quotient $*$-homomorphism
   $q: C \to \ell_\infty(I;\C)$. By Lemma \ref{02} there is $T_E\in B_{C \otimes_{\max} E} $
   lifting $X$. 
    Since a finite set of elements of $C=C^*(\F)$ 
   is ``supported" by a countable free subgroup, 
   there is a copy of $\C$, say $\C\simeq C' \subset C$
   such that  $T_E\in C' \otimes E$ and ${C' \otimes_{\max}  \C} \subset {C\otimes_{\max}  \C}$ isometrically (see Lemma \ref{nsf}).
Thus $T_E$ can be identified with an element $T_E\in B_{C' \otimes_{\max}  E}$.
   Let $\pi_i: \ell_\infty(I;\C) \to \C$ denote the $i$-th coordinate. Then it is easy to check
   that for any $T\in I$   we have  
   $T=[\pi\otimes Id_E]( T_E)$.
   with $\pi = \pi_T q_{|C'}$. Since $C'$ is a copy of $\C$, this proves the only if part.  \\
   To check the if part, assume $E$ $\max$-controllable and controled by
   $t_E\in B_{\C \otimes_{\max} E} $;
   if $t$ and $I$ are  as in Theorem \ref{t15}, with $t_i\in B_{\C \otimes_{\max} E} $
     then    
   for any $i\in I$ there is a unital $*$-homomorphism $\pi_i: \C \to \C$  
   such that $t_i= [\pi_i\otimes Id_E] (t_E)$ and hence $t= [\pi\otimes Id_E] (t_E)$
   where $\pi =(\pi_i)_{i\in I}: \C \to \ell_\infty(I,\C)$ is also 
   a  unital $*$-homomorphism. Then
   $\|t\|_{\ell_\infty(I,\C) \otimes_{\max} E} \le \|t_E\|_{\max}\le 1$.
    By homogeneity  \eqref{e15} follows. \end{proof}
     
We will need to consider the analogue of the property in Proposition \ref{LP1}
for the minimal tensor product. 
     
     \begin{dfn}
     A f.d. o.s. $E$ will be called controllable if it is 
$\min$-controllable.
 In that case we will say that $t_E$ controls $B_{\C \otimes_{\min} E} $
 or simply that it controls $E$.
\end{dfn}

By Proposition \ref{LP1},  if $A$ satisfies \eqref{e15} (e.g. is separable with LP) and has the WEP then any f.d. $E \subset A$ is controllable.
      
       \begin{pro}  \label{31}
     Let $E $ be a f.d.o.s. The following are equivalent:
     \begin{itemize}
     \item[{\rm (i)}] The space $E$ is  controllable.
     \item[{\rm(ii)}] For any set  $I$, any free group $\F$   and any 
     surjective unital $*$-homomorphism $q: C^*(\F) \to \ell_\infty(I,\C)$  
     the mapping $q\otimes Id_E$ is a metric surjection
     from ${C^*(\F) \otimes_{\min} E}$ to ${\ell_\infty(I,\C) \otimes_{\min} E}$.
     \item[{\rm(iii)}] Same as (ii) for $I=\N$ and some $q$.
     \end{itemize}
     \end{pro}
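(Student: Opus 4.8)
The plan is to establish the cycle (i) $\Rightarrow$ (ii) $\Rightarrow$ (iii) $\Rightarrow$ (i). I will use freely that $\C=C^*(\F_\infty)$ has the LP (Remark \ref{r10}), that the minimal tensor product is functorial under u.c.p. maps (so $v\otimes Id_E$ is a complete contraction on $\min$ whenever $v$ is u.c.p.), that $\C\otimes_{\min}E$ is separable because $E$ is f.d., and that for f.d. $E$ one has $\ell_\infty(I,\C)\otimes_{\min}E=\ell_\infty(I,\C\otimes_{\min}E)$ isometrically for every index set $I$.

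For (i) $\Rightarrow$ (ii), fix a controlling element $t_E\in B_{\C\otimes_{\min}E}$ and a surjective unital $*$-homomorphism $q:C^*(\F)\to\ell_\infty(I,\C)$. Given $s\in B_{\ell_\infty(I,\C)\otimes_{\min}E}$, let $\pi_i:\ell_\infty(I,\C)\to\C$ be the coordinate maps; then $s_i:=[\pi_i\otimes Id_E](s)\in B_{\C\otimes_{\min}E}$, so controllability gives unital $*$-homomorphisms $\rho_i:\C\to\C$ with $s_i=[\rho_i\otimes Id_E](t_E)$. The assembled map $\rho=(\rho_i)_i:\C\to\ell_\infty(I,\C)$ is a unital $*$-homomorphism with $[\rho\otimes Id_E](t_E)=s$. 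Since $\ell_\infty(I,\C)=C^*(\F)/\ker q$ and $\C$ has the LP, the u.c.p. map $\rho$ lifts to a u.c.p. $\hat\rho:\C\to C^*(\F)$ with $q\hat\rho=\rho$; then $\hat s:=[\hat\rho\otimes Id_E](t_E)$ has $\|\hat s\|_{\min}\le\|t_E\|\le1$ and $[q\otimes Id_E](\hat s)=s$. Hence $q\otimes Id_E$ carries the closed unit ball onto the closed unit ball, which is in particular a metric surjection. The implication (ii) $\Rightarrow$ (iii) is then immediate: $\ell_\infty(\N,\C)$ is a quotient of $C^*(\F)$ for a sufficiently large free group $\F$, and one specialises (ii) to $I=\N$ and this $q$.

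The substance is (iii) $\Rightarrow$ (i). Choose a countable dense set $\{T_n\}\subset B_{\C\otimes_{\min}E}$ and form the universal element $X=(T_n)_n\in\ell_\infty(\N,\C)\otimes_{\min}E$, of norm $\le1$. The hypothesis of (iii) is exactly the $\min$-quotient identity needed to invoke the closed-ball refinement of Lemma \ref{02}, so $X$ lifts to some $\hat X\in C^*(\F)\otimes E$ with $\|\hat X\|_{\min}\le1$ and $[q\otimes Id_E](\hat X)=X$. Because $E$ is f.d., $\hat X$ has finitely many coefficients in $C^*(\F)$; by Lemma \ref{nsf} these lie in a copy $\C'\cong\C$, and choosing an isomorphism $\theta:\C\to\C'$ we obtain $t_E\in B_{\C\otimes_{\min}E}$ with $[\theta\otimes Id_E](t_E)=\hat X$. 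For each $n$ the unital $*$-homomorphism $\pi_n q\,\theta:\C\to\C$ then satisfies $[\pi_n q\,\theta\otimes Id_E](t_E)=T_n$, so $t_E$ reaches the dense set $\{T_n\}$, and every reachable element lies in $B_{\C\otimes_{\min}E}$.

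It remains to pass from the dense set to all of $B_{\C\otimes_{\min}E}$; this is the step I expect to be the main obstacle. Equivalently, one must show that the reachable set $\{[\pi\otimes Id_E](t_E):\pi:\C\to\C \text{ a unital } *\text{-homomorphism}\}$ is closed. Here there is one easy half: if $t=\lim_k[\pi_k\otimes Id_E](t_E)$, then applying slice maps shows that $\pi_k$ converges in norm on each coefficient of $t_E$, hence point-norm on the separable subalgebra $\mathcal D\subset\C$ they generate, and the limit is an honest unital $*$-homomorphism $\gamma_0:\mathcal D\to\C$ with $[\gamma_0\otimes Id_E](t_E)=t$. The genuine difficulty is to \emph{globalise} $\gamma_0$, i.e. to produce a unital $*$-homomorphism $\C\to\C$ restricting to $\gamma_0$ on $\mathcal D$: $*$-homomorphisms do not extend from arbitrary subalgebras, and a bare weak$^*$ or ultrapower limit of the $\pi_k$ only yields a u.c.p. map into $\B=B(\ell_2)$ rather than a $*$-homomorphism into $\C$. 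Resolving this — presumably by exploiting the free-factor position of $\C'$ furnished by Lemma \ref{nsf}, together with the LP of $\C$ (and, plausibly, a local-reflexivity-type descent from the bidual) — is where the real work lies; with it in hand, closedness of the reachable set gives $\{[\pi\otimes Id_E](t_E)\}=B_{\C\otimes_{\min}E}$, i.e. $E$ is controllable.
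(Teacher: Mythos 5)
Your implications (i) $\Rightarrow$ (ii) and (ii) $\Rightarrow$ (iii) are correct and essentially identical to the paper's: decompose into coordinates, assemble the unital $*$-homomorphism $\rho=(\rho_i)$, lift it through $q$ using the LP of $\C$, and conclude that the closed ball maps onto the closed ball. The first half of your (iii) $\Rightarrow$ (i) (lifting the tensor built from a countable dense set $\{T_n\}$, relocating its coefficients into a copy of $\C$ via Lemma \ref{nsf}, so that the reachable set of $t_E$ contains $\{T_n\}$) also coincides with what the paper does. But the gap is exactly where you locate it, and it is genuine: you have no way to pass from a dense reachable set to the whole ball. The closedness of the reachable set that you hope for is not proved in the paper either, and there is no evident reason it should hold for the particular $t_E$ you constructed; as you note yourself, a point-norm limit of unital $*$-homomorphisms $\pi_k:\C\to\C$ only yields a $*$-homomorphism on the separable subalgebra generated by the coefficients of $t_E$, and $*$-homomorphisms into $\C$ do not extend from subalgebras. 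As it stands, this route is a dead end.

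The paper avoids the question entirely by routing the last step through (ii) rather than directly to (i): it proves (iii) $\Rightarrow$ (ii), and (i) then follows from the separately proved implication (ii) $\Rightarrow$ (i). Two observations make this work. First, metric surjectivity is a statement about norms only, so by the open mapping theorem (successive approximation) it suffices that the image under $q\otimes Id_E$ of the closed unit ball be \emph{dense} in the closed unit ball of $\ell_\infty(I,\C)\otimes_{\min}E$; your dense reachable set delivers precisely this. Indeed, for an arbitrary index set $I$, any element of $B_{\ell_\infty(I,\C)\otimes_{\min}E}$ can be approximated (using separability of $\C$) by one whose coordinates are constant on a countable partition of $I$ and lie in $\{T_n\}$; such an element equals $[\rho\otimes Id_E](t_E)$ for a unital $*$-homomorphism $\rho:\C\to\ell_\infty(I,\C)$, and lifting $\rho$ through $q$ by the LP of $\C$ puts it exactly in the image of $B_{C^*(\F)\otimes_{\min}E}$. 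Second, once (ii) is known for \emph{every} index set, one reruns the lifting argument with $I=B_{\C\otimes_{\min}E}$ itself (the full ball, not a countable dense subset) and the tautological tensor $t=(t_i)_{i\in I}$, $t_i=i$: the closed-ball refinement of Lemma \ref{02} --- which you already invoke --- lifts $t$ exactly to some $t_E'\in B_{C^*(\F)\otimes_{\min}E}$, whose coordinate homomorphisms then reach every element of the ball \emph{exactly}, with no limiting procedure; Lemma \ref{nsf} finishes as before. So the missing idea is not a closedness argument but the open-mapping upgrade from density to metric surjectivity, followed by re-lifting over the full ball.
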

      \begin{proof} (i) $\Rightarrow$ (ii) : For any $i\in I$ 
      and $t_i\in B_{\C \otimes_{\min} E} $ there is a unital $*$-homomorphism $\pi_i: \C \to \C$ such that $t_i= (\pi_i \otimes Id_E)(t_E)$.
      Let $\pi=(\pi_i): \C \to \ell_\infty(I,\C) $.
      By the LP of $\C$ there is a unital c.p. map $\hat \pi: \C \to  C^*(\F)$
      lifting $q$. Let $\hat t= (\hat \pi\otimes Id_E)( t_E)$.
      Then $ (q\otimes Id_E) (\hat t) =(t_i)$. This shows that (ii) holds.\\  
      (ii) $\Rightarrow$ (i) : Let $I=B_{\C \otimes_{\min} E}$, and let $t\in \ell_\infty(I, \C\otimes E)\simeq \ell_\infty(I, \C)\otimes E$ be defined by $t_i=i$. Let $t_E\in B_{C^*(\F) \otimes_{\min} E}$ be a lifting of $t$.
      Let $q_i: C^*(\F) \to \C$ be the $i$-th coordinate of $q$.
       We have $ (q_i \otimes Id_E)(t_E)=t_i$. Using Lemma \ref{nsf} we may assume 
   $\F=\F_\infty$ and $t_E\in \C$ and we obtain (i).\\     
      (ii) $\Rightarrow$ (iii) is trivial.\\
      (iii) $\Rightarrow$ (ii) : Assume (iii). 
      By the same reasoning as for (ii) $\Rightarrow$ (i) we obtain 
      $t_E\in B_{\C \otimes_{\min} E}$ such that $\{(q_n \otimes Id_E)(t_E)\mid n\in \N\}$ is dense in $B_{\C \otimes_{\min} E}$. Let $I$ be an arbitrary set.
      Observe that
       by the separability of $\C$ any  finite set of elements of $\ell_\infty(I, \C)$
      can be approximated by ones that are constant on a common countable partition
      of $I$. With this observation, the argument for (i) $\Rightarrow$ (ii)  now shows that the image under $q\otimes Id_E$ of  
      $B_{\C \otimes_{\min} E}$
      is dense in
      the unit ball
      of $\ell_\infty(I, \C)$.
      By the open mapping theorem, this means that (ii) holds.
  \end{proof}
  
  By well known properties of ideals in $C^*$-algebras (see e.g. \cite[p. 103]{P6}) it can be shown
  that if $E$ satisfies  (ii) (or (iii)) in Proposition \ref{31} then any subspace of $E$ also does.
  This implies the next statement which can also be derived from Theorem \ref{t11} below.
  \begin{cor} Any subspace of a f.d. controllable $E $ is controllable.
\end{cor}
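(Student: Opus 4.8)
The plan is to reduce the statement to the metric‑surjection characterization of controllability proved in Proposition \ref{31}, and then to show that this property passes to subspaces by a compression argument with an approximate unit of the relevant ideal. So let $E_0\subset E$ be a subspace of a controllable f.d.\ operator space $E$. By the implication (i)$\Rightarrow$(ii) of Proposition \ref{31} I may fix an arbitrary index set $I$, a free group $\F$ and a surjective unital $*$-homomorphism $q:C^*(\F)\to\ell_\infty(I,\C)$; writing $C=C^*(\F)$, $B=\ell_\infty(I,\C)$ and $J=\ker(q)$, it suffices to prove that $q\otimes Id_{E_0}$ is a metric surjection from $C\otimes_{\min}E_0$ onto $B\otimes_{\min}E_0$, since controllability of $E_0$ will then follow from the converse implication (ii)$\Rightarrow$(i) of the same proposition.

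First I would fix a basis $e_1,\dots,e_d$ of $E$ with $e_1,\dots,e_{d_0}$ spanning $E_0$. As $E$ is f.d., for any $C^*$-algebra $D$ the space $D\otimes_{\min}E$ is $D\otimes E$ with its (complete) min norm, and injectivity of $\otimes_{\min}$ gives the isometric inclusions $D\otimes_{\min}E_0\subset D\otimes_{\min}E$. Expanding elements in the basis, one checks at once that $\ker(q\otimes Id_E)=J\otimes E$, that $\ker(q\otimes Id_{E_0})=J\otimes E_0$, and that $(J\otimes E)\cap(C\otimes E_0)=J\otimes E_0$. Since the controllability of $E$ makes $q\otimes Id_E$ a metric surjection, the induced map $(C\otimes_{\min}E)/(J\otimes E)\to B\otimes_{\min}E$ is isometric; unwinding the norms, the assertion that $q\otimes Id_{E_0}$ is a metric surjection reduces to the single identity
\[
\inf\{\|v-w\|_{\min}: w\in J\otimes E_0\}=\inf\{\|v-w\|_{\min}: w\in J\otimes E\},\qquad v\in C\otimes E_0,
\]
the inequality $\ge$ being trivial since the right-hand infimum runs over a larger set.

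The heart of the proof, and the only non‑formal step, is the reverse inequality, and this is exactly where the properties of ideals enter — the existence of a (quasicentral) approximate unit $(u_\lambda)$ for $J$, as already used in the proof of Lemma \ref{02} and going back to Arveson. Given $v\in C\otimes E_0$ and $w\in J\otimes E$ nearly realizing the right-hand infimum, I would set $D_\lambda=(1-u_\lambda)^{1/2}\otimes 1$ and compress. The mechanism is twofold. Since $\|D_\lambda\|\le 1$, the compression $D_\lambda(v-w)D_\lambda$ has norm at most $\|v-w\|$. Writing $(1-u_\lambda)^{1/2}=1-r_\lambda$ with $r_\lambda\in J$ (functional calculus, using that $C$ is unital), one computes $D_\lambda v D_\lambda=v-w^0_\lambda$ with $w^0_\lambda\in J\otimes E_0$, because the basis components of $v$ beyond $e_{d_0}$ are zero; meanwhile $D_\lambda w D_\lambda\to 0$ since $w\in J\otimes E$ and $(u_\lambda)$ is an approximate unit of $J$. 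Combining these gives $\|v-w^0_\lambda\|\le\|v-w\|+o(1)$, which yields the reverse inequality up to $\vp$ and hence the identity.

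Once the norm identity is in hand, the sharper statement that the closed unit ball maps onto the closed unit ball follows from the standard successive-approximation argument on complete spaces, exactly as in the second assertion of Lemma \ref{02}. I expect the reverse norm inequality to be the main obstacle: in the operator-space category there is in general no contractive projection $E\to E_0$, so one cannot simply truncate the approximating element $w$ to its $E_0$-components, and it is precisely the ideal compression $D_\lambda\,\cdot\,D_\lambda$ that plays the role of such a projection. (As the surrounding text notes, the corollary can alternatively be deduced from Theorem \ref{t11}, but the argument above is self-contained given Proposition \ref{31}.)
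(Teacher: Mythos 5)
Your proposal is correct and takes essentially the same route as the paper: the paper also deduces the corollary by showing that property (ii) of Proposition \ref{31} passes to subspaces, invoking ``well known properties of ideals in $C^*$-algebras'' --- which is exactly the approximate-unit compression argument (distance to $J\otimes E_0$ versus $J\otimes E$) that you spell out. The only difference is that the paper leaves those ideal-theoretic details to a citation and mentions an alternative derivation from Theorem \ref{t11}, whereas you supply the details explicitly.
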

       \begin{rem}\label{r2}
       Let $q: C^*(\F) \to \ell_\infty(I,\C)$ be as in Proposition \ref{31}.
        Let $\cl I=\ker(q)$ so that $\ell_\infty(I,\C)=C^*(\F)/\cl I$.
        With the latter identification,
        we have a natural map
        $$\Phi_E: \ell_\infty(I,\C) \otimes_{\min} E  \to [C^*(\F)\otimes_{\min} E]/[\cl I\otimes_{\min} E],$$
        and $E$ is controllable if and only if
        $\| \Phi_E \|=1.$
        Assume that $E\subset B(H)$ is a unital subspace spanned by unitary operators. Let $A$ be the
        $C^*$-algebra generated by $E$.
        Note that $[C^*(\F)\otimes_{\min} E]/[\cl I\otimes_{\min} E]$
        naturally embeds in $[C^*(\F)\otimes_{\min} A]/[\cl I\otimes_{\min} A]$
        so that $\Phi_A$ can be identified with  the unital $*$-homomorphism extending $\Phi_E$.
        Then the linearization trick of \cite{93.}
        shows that $\| \Phi_E \|_{cb}=1$
        if and only if $\| \Phi_A \|=1$, or equivalently
        if and only if every f.d. subspace of $A$ is controllable.
        It is easy to check that $\| Id_{M_k} \otimes \Phi_E \|=1$
         if and only if $M_k(E)$ is controllable.
         Thus $M_k(E)$  is controllable for all $k\ge 1$
         if and only if  every f.d. subspace of $A$ is controllable.

       \end{rem}
     \begin{rem} Let $A$ be a $C^*$-algebra.
     The assertion that (ii) holds for any f.d. subspace $E\subset A$
     is equivalent to:
       \begin{itemize}
     \item[{\rm (ii)'}] The sequence 
     $$\{0\} \to  {\cl I \otimes_{\min} A} \to   {C^*(\F) \otimes_{\min} A}\to {\ell_\infty(I,\C) \otimes_{\min} A}$$
      \end{itemize} is exact
       where $\cl I=\ker(q) $.
       Therefore, any f.d. subspace of an exact  $C^*$-algebra is controllable.\\
      
     \end{rem}

      \begin{lem}\label{l11} 
Let $u: E \to B$ be a linear map from a f.d.o.s. to a $C^*$-algebra.\\
For any $C^*$-algebra $C$, we denote
$\|u\|_{C}= \|Id_C \otimes u : C \otimes_{\min} E \to C \otimes_{\max} B \|.$\\
If $E$ is controllable 
with $t_E$ controling $B_{\C \otimes_{\min} E} $
we have $\|u\|_{\C}     = \|[Id_\C \otimes u] (t_E)\|_{\max}$,\\ and 
$\|u\|_{\ell_\infty(\C) }= \|u\|_{\C}     $ 
or equivalently by \eqref{mM}
\begin{equation}\label{e29}
\|u\|_{mM} = \|u\|_{\C} =   \|[Id_\C \otimes u] (t_E)\|_{\max} .\end{equation}
If $E$ is controllable and $B$ WEP then $\|u\|_{mM} = \|u\|_{cb}     .$ 
  \end{lem}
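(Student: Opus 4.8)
The strategy is to express every norm appearing in the statement in terms of the single quantity $\|[Id_\C \otimes u](t_E)\|_{\max}$. Two mechanisms drive this: controllability of $E$, which replaces an arbitrary element of a relevant unit ball by the image of $t_E$ under a unital $*$-homomorphism of $\C$, and the elementary fact that a unital $*$-homomorphism $\pi$ induces a contraction $\pi \otimes Id_B$ for the maximal tensor product. First I would prove $\|u\|_\C = \|[Id_\C \otimes u](t_E)\|_{\max}$. Since $\|t_E\|_{\min} \le 1$, the inequality $\|u\|_\C \ge \|[Id_\C \otimes u](t_E)\|_{\max}$ is immediate from the definition of $\|u\|_\C$. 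For the reverse, take any $t \in B_{\C \otimes_{\min} E}$; controllability gives a unital $*$-homomorphism $\pi : \C \to \C$ with $t = [\pi \otimes Id_E](t_E)$. Writing $t_E$ as a finite sum one checks the identity $[Id_\C \otimes u](t) = [\pi \otimes Id_B]\bigl([Id_\C \otimes u](t_E)\bigr)$, and since $\pi \otimes Id_B$ is contractive on $\C \otimes_{\max} B$ we obtain $\|[Id_\C \otimes u](t)\|_{\max} \le \|[Id_\C \otimes u](t_E)\|_{\max}$; taking the supremum over $t$ yields the claim.

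I would next show that $\|u\|_{\ell_\infty(\C)}$ equals this same quantity, which gives $\|u\|_{\ell_\infty(\C)} = \|u\|_\C$ and hence \eqref{e29}. The lower bound comes from applying the diagonal $*$-homomorphism $\Delta : \C \to \ell_\infty(\C)$ to $t_E$: because a coordinate projection $\pi_i$ is a $*$-homomorphism with $\pi_i \Delta = Id_\C$, the map $\Delta \otimes Id_B$ is isometric for the maximal tensor product, so evaluating at $[\Delta \otimes Id_E](t_E) \in B_{\ell_\infty(\C) \otimes_{\min} E}$ recovers $\|[Id_\C \otimes u](t_E)\|_{\max}$. For the upper bound I would use the isometric inclusion $\ell_\infty(\C) \otimes_{\min} E \subset \ell_\infty(\C \otimes_{\min} E)$ (valid since $E$ is f.d.), which realizes any $T \in B_{\ell_\infty(\C) \otimes_{\min} E}$ as a bounded family $(T_n)$ with each $T_n \in B_{\C \otimes_{\min} E}$; controlling each $T_n$ by some $\pi_n : \C \to \C$ and assembling $\pi := (\pi_n) : \C \to \ell_\infty(\C)$, a single unital $*$-homomorphism, gives $T = [\pi \otimes Id_E](t_E)$, after which the same max-contractivity argument bounds $\|[Id_{\ell_\infty(\C)} \otimes u](T)\|_{\max}$ by $\|[Id_\C \otimes u](t_E)\|_{\max}$.

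For the last assertion, suppose $B$ is WEP. Then $\C \otimes_{\min} B = \C \otimes_{\max} B$ isometrically, so $\|u\|_\C = \|Id_\C \otimes u : \C \otimes_{\min} E \to \C \otimes_{\min} B\| \le \|u\|_{cb}$ by functoriality of the minimal tensor product. Conversely, since the unit ball of $\C \otimes_{\max} E$ is contained in that of $\C \otimes_{\min} E$ and both maps land in $\C \otimes_{\max} B$, we have $\|u\|_\C \ge \|u\|_{mb}$, while $\|u\|_{mb} = \|u\|_{cb}$ by \eqref{eglo} because $B$ is WEP; together with \eqref{e29} these give $\|u\|_{mM} = \|u\|_\C = \|u\|_{cb}$.

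The one genuinely substantive step is the upper bound $\|u\|_{\ell_\infty(\C)} \le \|[Id_\C \otimes u](t_E)\|_{\max}$ in the second paragraph: it is here that controllability is indispensable, since one must produce a \emph{single} $*$-homomorphism $\pi$ handling all coordinates of $T$ simultaneously. This is exactly why the finite-dimensionality of $E$ (through the coordinatewise realization inside $\ell_\infty(\C \otimes_{\min} E)$) cannot be dispensed with, and everything else reduces to bookkeeping with the max-contractivity of $*$-homomorphisms and the inclusion relations between the min and max unit balls.
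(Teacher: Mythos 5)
Your proof is correct and follows essentially the same route as the paper's: controllability reduces every norm to $\|[Id_\C \otimes u](t_E)\|_{\max}$ via the max-contractivity of $*$-homomorphisms, and the key step is exactly the paper's, namely controlling each coordinate $t_i$ and assembling the homomorphisms into a single $\pi=(\pi_i):\C \to \ell_\infty(\C)$. The only cosmetic differences are that you spell out the paper's ``obvious'' inequality $\|u\|_{\C}\le \|u\|_{mM}$ via the diagonal embedding $\Delta:\C\to\ell_\infty(\C)$, and that you settle the WEP case through $\|u\|_{cb}=\|u\|_{mb}\le \|u\|_{\C}\le\|u\|_{cb}$ using \eqref{eglo}, a slightly more detailed bookkeeping than the paper's one-line appeal to $\C\otimes_{\min}B=\C\otimes_{\max}B$.
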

  
   \begin{proof} Assume  that $t_E\in B_{\C \otimes_{\min} E}$   controls $E$. 
   For any $t\in B_{\C \otimes_{\min} E}$ there is a unital $*$-homomorphism
   $\pi_t: \C \to \C$ such that $t=[\pi_t \otimes Id_E] (t_E)$. We have
   $ [Id_\C \otimes u] (t)= [\pi_t \otimes u] (t_E)=[\pi_t \otimes Id_B] [ Id_\C \otimes u] (t_E)$ and hence
   $\| [Id_\C \otimes u] (t) \|_{\max} \le \| [Id_\C \otimes u] (t_E) \|_{\max}$, which proves the first equality.
   
   Let $I$ be any set, let $t$ be in the unit ball of
    $\ell_\infty(I,\C) \otimes_{\min} E$ or equivalently of $\ell_\infty(I,\C \otimes_{\min} E)$.
We have
$t=(t_i)_{i\in I}$ with  $t_i\in B_{\C \otimes_{\min} E}$.
For any $i\in I$ there is a unital $*$-homomorphism $\pi_i: \C \to \C$  
   such that $t_i= [\pi_i\otimes Id] (t_E)$ and hence $t= [\pi\otimes Id_E] (t_E)$
   where $\pi =(\pi_i)_{i\in I}: \C \to \ell_\infty(I,\C)$ is  
   a  unital $*$-homomorphism. Now $(Id_{\ell_\infty(I,\C)} \otimes u) ( t)=  [\pi\otimes u] (t_E)= [\pi\otimes Id_{B}][Id_\C \otimes u] (t_E)$
   and hence $\|(Id_{\ell_\infty(I,\C)} \otimes u) ( t) \|_{\ell_\infty(I,\C) \otimes_{\max} B} 
   \le \| [Id_\C \otimes u] (t_E)   \|_{\C \otimes_{\max} B} =    \|u\|_{\C}$.
   This shows that $\|u\|_{mM} \le  \|u\|_{\C}$. The converse is obvious.
   If $B$ has the WEP
then $\C \otimes_{\min} B=\C \otimes_{\max} B$   and hence $\|u\|_{\C}  = \|u\|_{cb}.   $ 
   \end{proof}

\begin{thm}\label{t11} Let $E\subset B(H)$ be a f.d.o.s. The following assertions are equivalent:
 \begin{itemize}  
 \item[{\rm (i)}] $E$ is controllable.
   \item[{\rm (ii)}] The   inclusion $i_E: E \to B(H)$ satisfies
$\|i_E\|_{mM}=1$.
 \item[{\rm (iii)}] There is a completely isometric
 map    $u$ from  $E$ to a $C^*$-algebra $B$ such that
 $\|u\|_{mM}\le 1 $.
 \item[{\rm (iv)}] There is a completely isometric
 map $\hat u$ from   $E$ to   $\C$ such that
 $\|\hat u\|_{mM}\le 1 $.
\item[{\rm (v)}] There is a c.i. embedding $f: E \to \C$ 
    such that the min and max norms coincide on $\C \otimes f(E)$.
\end{itemize} 
Moreover, if $E$ is controllable and $B$   WEP (e.g. $B=B(H)$) then any completely isometric
 $u:E \to B$ satisfies $\|u\|_{mM}= 1 $.
\end{thm}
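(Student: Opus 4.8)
The plan is to get the quantitative ``moreover'' statement and (i)$\Rightarrow$(ii) for free from Lemma \ref{l11}, make (ii)$\Rightarrow$(iii) trivial, close the circle among (i),(ii),(iii) by a single lifting argument, and then handle (iv),(v) by identifying the relevant norms and reducing to $\max$-controllability. For the moreover part: the last line of Lemma \ref{l11} says that if $E$ is controllable and $B$ is WEP then $\|u\|_{mM}=\|u\|_{cb}$ for every $u\colon E\to B$, so any completely isometric $u$ (which has $\|u\|_{cb}=1$) satisfies $\|u\|_{mM}=1$. Taking $B=B(H)$, $u=i_E$ gives (i)$\Rightarrow$(ii), and (ii)$\Rightarrow$(iii) is immediate with the same data.

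The first genuinely interesting step is (iii)$\Rightarrow$(i). Assume $u\colon E\to B$ is completely isometric with $\|u\|_{mM}\le 1$. By Proposition \ref{pp2}, $u$ is $\ell_\infty(I,\C)$-nuclear for every index set $I$; I would fix such an $I$ and a surjective unital $*$-homomorphism $q\colon C^*(\F)\to\ell_\infty(I,\C)$ and verify condition (ii) of Proposition \ref{31}, i.e. that $q\otimes Id_E$ is a metric surjection in the minimal norm. Given $X\in B_{\ell_\infty(I,\C)\otimes_{\min}E}$, push it forward to $Y=(Id\otimes u)(X)$, which lies in the unit ball of $\ell_\infty(I,\C)\otimes_{\max}B$ precisely because $\|u\|_{mM}\le 1$. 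Now lift $Y$ through $q\otimes Id$ in the \emph{maximal} norm, which is legitimate unconditionally by Lemma \ref{02}, obtaining $\hat Y$ in the closed unit ball of $C^*(\F)\otimes_{\max}B$ supported on $u(E)$. Writing $\hat Y=(Id\otimes u)(\hat X)$, complete isometry of $u$ lets me transport the bound back to the minimal norm:
$$\|\hat X\|_{C^*(\F)\otimes_{\min}E}=\|\hat Y\|_{C^*(\F)\otimes_{\min}u(E)}\le\|\hat Y\|_{C^*(\F)\otimes_{\max}B}\le 1,$$
while $(q\otimes Id_E)(\hat X)=X$ since $Id\otimes u$ is injective. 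So $\hat X$ is a norm-one lift, $q\otimes Id_E$ is a metric surjection, and $E$ is controllable by Proposition \ref{31}. I stress that this uses neither the WEP of $B$ nor $B=\C$, only that $u$ is completely isometric and $\ell_\infty$-nuclear.

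Next I would record (iv)$\Leftrightarrow$(v). For completely isometric $\hat u\colon E\to\C$, Lemma \ref{l1} (applicable since $\C$ satisfies \eqref{e15} by Theorem \ref{t15}) gives $\|\hat u\|_{mM}=\|Id_\C\otimes\hat u\colon\C\otimes_{\min}E\to\C\otimes_{\max}\C\|$, which, $\hat u$ being a complete isometry, is exactly $\|Id\colon\C\otimes_{\min}f(E)\to\C\otimes_{\max}f(E)\|$ with $f=\hat u$; hence $\|\hat u\|_{mM}\le 1$ is the same as $\min=\max$ on $\C\otimes f(E)$. For (v)$\Rightarrow$(i), put $F=f(E)\subset\C$: Proposition \ref{LP1} shows $F$ is $\max$-controllable by some $s\in B_{\C\otimes_{\max}F}$, and when $\min=\max$ on $\C\otimes F$ the two unit balls coincide, so the same $s$ controls $B_{\C\otimes_{\min}F}$; thus $F$, and therefore $E$ (controllability being a complete-isometry invariant, transported via $f$), is controllable.

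This leaves (i)$\Rightarrow$(iv), which I expect to be the main obstacle. By Lemma \ref{l11}, for controllable $E$ and any completely isometric $f\colon E\to\C$ one has $\|f\|_{mM}=\|[Id_\C\otimes f](t_E)\|_{\C\otimes_{\max}\C}$, so (iv) asks for \emph{one} embedding making this at most $1$, i.e. $\min=\max$ on $\C\otimes f(E)$. The difficulty is that $\|f\|_{\C}\le 1$ is automatic for a WEP target (Kirchberg's $\C\otimes_{\min}B=\C\otimes_{\max}B$), but $\C$ is \emph{not} WEP and a generic copy of $E$ inside $\C$ inflates the maximal norm; transferring the bound $\|[Id\otimes u](t_E)\|_{\max}\le1$ from a WEP target into $\C$ cannot be done by dilating the offending complete contractions, since Stinespring/Wittstock destroys the commuting representations that the maximal norm requires. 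My plan is to build a \emph{special} embedding adapted to the controller: $t_E=\sum_k c_k\otimes e_k$ yields a completely isometric $\Lambda\colon E^*\to\C$, $\Lambda(\xi)=(\mathrm{id}\otimes\xi)(t_E)$ (complete isometry because $t_E$ controls the whole ball of $\C\otimes_{\min}E\cong CB(E^*,\C)$), and I would combine this with the rigidity/duality picture of controllability and the identification $\C\cong\C*\C$ to place a copy of $E$ in free position relative to the first leg. Forcing this construction to yield $\min=\max$ locally is the crux, and the step I am least certain how to finish cleanly.
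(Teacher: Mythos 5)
Most of what you prove is sound, and one piece is genuinely valuable: your direct proof of (iii) $\Rightarrow$ (i) --- pushing a unit-ball element of $\ell_\infty(I,\C)\otimes_{\min}E$ forward by $u$, lifting that element in the \emph{max}-norm via Lemma \ref{02}, and pulling the bound back through the complete isometry to verify criterion (ii) of Proposition \ref{31} (with Proposition \ref{pp2} supplying $\ell_\infty(I,\C)$-nuclearity) --- is correct, and it is not how the paper argues. In fact the paper closes its cycle through (iv) and (v), and its proof of ``(iii) $\Rightarrow$ (iv)'' opens by choosing a controller $T$ for $E$, i.e.\ it invokes (i); so your standalone (iii) $\Rightarrow$ (i) is a useful complement to the paper's argument. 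Your treatment of the ``moreover'' clause, of (i) $\Rightarrow$ (ii) $\Rightarrow$ (iii), and of (iv) $\Leftrightarrow$ (v) $\Rightarrow$ (i) is also fine. Nevertheless the theorem is not proved: as you yourself admit, you never establish any implication from the group (i)--(iii) into the group (iv)--(v). You have (i) $\Leftrightarrow$ (ii) $\Leftrightarrow$ (iii), (iv) $\Leftrightarrow$ (v), and (v) $\Rightarrow$ (i); the bridge (i) $\Rightarrow$ (iv) is missing, and it carries the real content of the theorem (it is what gives Corollary \ref{c4}, that controllable spaces embed completely isometrically in $\C$).

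The missing idea is that (i) $\Rightarrow$ (iv) is again a \emph{lifting} argument, essentially the mirror image of your (iii) $\Rightarrow$ (i), not the construction of a special embedding from scratch (your sketch with $\Lambda: E^*\to\C$ and free products is the wrong road). Keep the generic inclusion $u: E\to B(\cl H)$ and a quotient $q: C^*(\F)\to B(\cl H)$; the goal is a lifting $\hat u$ of $u$ with $\|\hat u\|_{mM}\le 1$ (such a lifting is automatically c.i., and Lemma \ref{nsf} then places its separable range in a copy of $\C$). The unlocking fact is the one you quote from Lemma \ref{l11}: for controllable $E$ the \emph{entire} $mM$-norm of any map $w$ on $E$ is witnessed by the single tensor $t_E$, namely $\|w\|_{mM}=\|(Id_\C\otimes w)(t_E)\|_{\max}$ by \eqref{e29}. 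Since $B(\cl H)$ is WEP, $\|(Id_\C\otimes u)(t_E)\|_{\max}\le 1$; by Lemma \ref{02} lift this one tensor to $\hat T$ with $\|\hat T\|_{F\otimes_{\max}C^*(\F)}\le 1$, where $F\subset\C$ is f.d.\ with $t_E\in F\otimes E$. Of course $\hat T$ need not come from a map lifting $u$, so one corrects a linear lifting: pick $v$ with $qv=Id_{u(E)}$, let $(\sigma_i)$ be an approximate unit of $\ker q$, set $w_i(x)=(1-\sigma_i)x$ and $u_i=w_ivu$. Each $u_i$ still lifts $u$ (since $qw_i=q$), while $(Id_\C\otimes vu)(t_E)-\hat T\in F\otimes\ker q$, so $(Id_\C\otimes u_i)(t_E)$ differs from $(Id_\C\otimes w_i)(\hat T)$ by a term going to $0$ in max-norm; as $\|(Id_\C\otimes w_i)(\hat T)\|_{\max}\le\|\hat T\|_{\max}\le 1$, equation \eqref{e29} gives $\limsup_i\|u_i\|_{mM}\le 1$. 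Arveson's principle (the classes $\{w:\|w\|_{mM}\le 1\}$ are admissible) then upgrades the net $(u_i)$ to an exact lifting $\hat u$ of $u$ with $\|\hat u\|_{mM}\le 1$, which is (iv). Without a step of this kind --- lifting the single controlled tensor in the max-norm and letting the controller propagate that bound to the full $mM$-norm --- your two blocks of equivalences remain disconnected and the statement is not established.
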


\begin{proof} (i) $\Rightarrow$ (ii): Since $B(H)$ has the  WEP 
 this follows from Lemma \ref{l11}.\\
   (ii) $\Rightarrow$ (iii) is trivial.\\
 (iii) $\Rightarrow$ (iv): Assume (iii). Using an embedding $B\subset B(\cl H)$ we may assume
  $B= B(\cl H)$.
 Let $u:E \to B$ be  completely isometric.
 Let $\F$ be a big enough free group so that there is a quotient map $q: C^*(\F) \to B$.
    Let $v: u(E) \to C^*(\F)$ be a {\it linear} lifting.
  Let $T\in B_{\C \otimes_{\min} E} $ controling $B_{\C \otimes_{\min} E} $.
  Then ${\C \otimes_{\min} B} ={\C \otimes_{\max} B}$
  (which holds since $B= B(\cl H)$ has the WEP)  implies  $\|(Id_\C \otimes u)(T)\|_{\max} \le 1$.  Let $F\subset \C$ be f.d.
  such that $T\in F \otimes E$. By Lemma \ref{02}
  there is $\hat T \in B_{F \otimes_{\max} C^*(\F)} $ lifting $(Id \otimes u)(T)$.
  Let $0\le \sigma_i \le 1$ be an approximate unit in  $I=\ker(q)$.
  Let $w_i(x): =(1-\sigma_i) x$, $x\in C^*(\F)$. Then $(Id_\C \otimes vu) (T)-\hat T \in F \otimes I$ and hence
  $\lim_i \|(Id_\C \otimes w_i) [(Id_\C \otimes vu) (T)-\hat T]\|_{\max} =0$.
  Let $u_i=w_ivu: E \to C^*(\F)$. We have
  $\lim_i \|(Id_\C \otimes u_i)   (T) \|_{\max} \le 1$.
  Since $T$ controls $E$ it follows recalling \eqref{mM'} and \eqref{e29} 
  $$\lim\nl_i \|  u_i  \|_{mM} \le 1,$$
 and since $u_i$ lifts $u$, Arveson's principle gives us a lifting
   $\hat u: E \to C^*(\F)$ with $\|\hat u \|_{mM} = 1$. Since $u$ is c.i. so is $\hat u$.
   Lastly, being separable  the range of $\hat u$ is included in a copy $\C'$ of $\C$ sitting in
   $C^*(\F)$    and by Lemma \ref{nsf}  we obtain (iv).\\
(iv) $\Rightarrow$ (v):  Assume (iv).  A fortiori we have $\|\hat u\|_{\C}\le 1$.
This means that $f=\hat u$ satisfies (v).\\
(v) $\Rightarrow$ (i): Note  $f(E) \subset \C$ is $\max$-controllable by
Proposition \ref{LP1}. If (v) holds it must be $\min$-controllable, i.e.
controllable, and $f(E)$ is c.i. to $E$, so $E$ is controllable.\\
 When $B$ is WEP,  the last assertion follows from
 the last one in Lemma \ref{l11}.
\end{proof}
\begin{rem} An alternate proof of (iii) $\Rightarrow$ (iv) can be based on 
the local reflexivity in Theorem
\ref{t3}.
\end{rem}
\begin{cor}\label{c4} Any  controllable f.d.o.s. embeds completely isometrically in $\C$.
\end{cor}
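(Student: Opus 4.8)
The plan is to read the statement off directly from the characterization already established in Theorem \ref{t11}. The point is that min-controllability is an \emph{intrinsic} operator space property: the minimal tensor norm on $\C \otimes_{\min} E$ depends only on the operator space structure of $E$ and not on any ambient $C^*$-algebra in which $E$ is realized. So the hypothesis ``$E$ is controllable'' is a property of the abstract f.d.o.s.\ $E$ alone, and it is preserved under completely isometric identifications.

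First I would fix, once and for all, a completely isometric embedding $E \subset B(H)$ for some Hilbert space $H$; this exists simply because $E$ is an operator space (equivalently, by Ruan's characterization recalled in the background section). By the previous remark, the assumption that $E$ is controllable is exactly condition (i) of Theorem \ref{t11} applied to this concrete copy $E \subset B(H)$.

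Next I would invoke the implication (i) $\Rightarrow$ (iv) of Theorem \ref{t11}. This produces a completely isometric map $\hat u : E \to \C$ (with the additional bound $\|\hat u\|_{mM} \le 1$, which we do not even need here). Since $\hat u$ is completely isometric, its range $\hat u(E) \subset \C$ is a completely isometric copy of $E$ sitting inside $\C$, which is precisely the assertion of Corollary \ref{c4}.

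I do not expect any genuine obstacle at this stage: all of the substance of the argument, in particular the nontrivial lifting/extension step realized via Arveson's principle and Lemma \ref{nsf}, is already absorbed into the proof of (iii) $\Rightarrow$ (iv) in Theorem \ref{t11}. The corollary is therefore a direct specialization, the only thing worth spelling out being that controllability is well defined independently of the embedding of $E$, so that Theorem \ref{t11} may be applied to an arbitrary concrete realization $E \subset B(H)$.
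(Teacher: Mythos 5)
Your proof is correct and is exactly the paper's intended argument: Corollary \ref{c4} is stated as an immediate consequence of the implication (i) $\Rightarrow$ (iv) of Theorem \ref{t11}, and your observation that min-controllability is intrinsic (since $\C \otimes_{\min} E$ depends only on the operator space structure of $E$) is the only point needing mention. Nothing is missing.
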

\begin{rem}\label{jp10}
  By  \cite{JP}  
  there are  $3$-dimensional o.s. that 
  do {\it not}  embed c.i. in $\C$ and hence  are {\it not} controllable.
  \end{rem}
\begin{rem}\label{jp}
Let $E$ be a f.d.o.s. By  \cite{JP} (see also \cite[p. 351-352]{P4} or \cite[p. 351]{P6}) 
$E$ is c.i. to a subspace of $\C$ if and only if the same is true for $E^*$.
Incidentally, Corollary \ref{c4} can be deduced from Proposition \ref{31}
after observing that any f.d.o.s. $F$ embeds c.i. in $\ell_\infty(\C)$ and applying this to $F=E^*$.
The next statement shows that $E$ is controllable
        if and only if   its dual o.s. $E^*$ is c.i. to a rigid subspace of $\C$. \end{rem}
      \begin{lem}\label{LP4} 
  Let $E\subset \C$ be a  f.d. subspace.    Then $E$ is controllable if and only if 
   there is a completely isometric (c.i. in short) embedding $f: E^* \to \C$ on the dual o.s. such that
   any c.c. $v : f(E^*) \to \C$ is the restriction of a unital $*$-homomorphism $\pi : \C \to \C$.
\end{lem}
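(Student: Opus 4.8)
The plan is to translate the definition of controllability into the language of completely bounded maps on the dual operator space, using the standard identification $\C \otimes_{\min} E \cong CB(E^*,\C)$, and then to read off the rigidity statement almost directly from this dictionary.

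First I would recall the completely isometric identification $\C \otimes_{\min} E \cong CB(E^*,\C)$, valid for any f.d. $E$ (see e.g. \cite{ER,P4}), under which an element $t = \sum_k c_k \otimes e_k$ corresponds to the map $T_t : E^* \to \C$, $\phi \mapsto \sum_k \langle e_k,\phi\rangle\, c_k$. At the level $n=1$ this gives $\|t\|_{\C \otimes_{\min} E} = \|T_t\|_{cb}$, so that $t \in B_{\C \otimes_{\min} E}$ if and only if $T_t$ is completely contractive. A direct computation shows that for a unital $*$-homomorphism $\pi : \C \to \C$ the element $[\pi \otimes Id_E](t)$ corresponds to the composite $\pi \circ T_t$. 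Hence controllability of $E$ is \emph{equivalent} to the statement: there is a c.c. map $T_E : E^* \to \C$ such that every c.c. map $T : E^* \to \C$ can be written $T = \pi \circ T_E$ for some unital $*$-homomorphism $\pi : \C \to \C$. This reformulation is the heart of the argument.

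For the forward implication, suppose $E$ is controllable with $T_E$ as above. By Remark \ref{jp} (since $E \subset \C$) the dual $E^*$ admits a c.i. embedding $j : E^* \to \C$. As $j$ is completely contractive, controllability gives a unital $*$-homomorphism $\pi$ with $j = \pi \circ T_E$; since $\|\pi\|_{cb} \le 1$ and $\|T_E\|_{cb}\le 1$, at every matrix level $\|x\| = \|j_n(x)\| = \|\pi_n(T_{E,n}(x))\| \le \|T_{E,n}(x)\| \le \|x\|$, forcing $T_E$ to be completely isometric. I would then set $f := T_E$ and check that $f(E^*)$ is rigid: given any c.c. $v : f(E^*) \to \C$, the composite $v \circ T_E : E^* \to \C$ is c.c. (as $T_E$ is a complete isometry onto $f(E^*)$), so by controllability $v \circ T_E = \pi \circ T_E$ for some unital $*$-homomorphism $\pi$; since $T_E$ is a bijection onto $f(E^*)$ this says exactly $v = \pi|_{f(E^*)}$.

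For the converse, suppose $f : E^* \to \C$ is a c.i. embedding with $f(E^*)$ rigid, and take $T_E := f$, which is completely contractive. Given any c.c. $T : E^* \to \C$, the map $v := T \circ f^{-1} : f(E^*) \to \C$ is completely contractive (because $f^{-1}$ is a complete isometry), so by rigidity it is the restriction of a unital $*$-homomorphism $\pi : \C \to \C$; then $\pi \circ T_E = \pi \circ f = v \circ f = T$, which is the required factorization, and by the reformulation above $E$ is controllable. The only genuinely delicate point is the first one, namely verifying that the controlling map $T_E$ may be taken completely isometric, which is precisely what makes $f = T_E$ an honest embedding of $E^*$; everything else is a routine unwinding of the min-tensor/$CB$ duality.
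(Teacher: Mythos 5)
Your proof is correct and follows essentially the same route as the paper's: both translate controllability through the correspondence $\C \otimes_{\min} E \cong CB(E^*,\C)$ (the paper phrases this as tensors being ``associated to'' maps), both use Remark \ref{jp} to produce a c.i. embedding of $E^*$ into $\C$ whose factorization through the controlling map forces that map to be completely isometric, and both then read off rigidity by unwinding the factorization. The only cosmetic difference is in the converse, where the paper passes through the auxiliary space $F = f(E^*)^*$ and the c.i.-invariance of controllability, while you verify the reformulated condition for $E$ directly with $T_E := f$.
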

\begin{proof}  Assume $E$ controllable.
 We know by Remark \ref{jp} that there is a c.i. map $h: E^* \to \C$.
The latter corresponds to a tensor $t\in \C \otimes E$ with $\|t\|_{\min}=1$.
Since $E$ is controllable, there is $t_0 \in B_{\C \otimes_{\min} E} $
such that for any $s\in B_{\C \otimes_{\min} E} $ there is a unital $*$-homomorphism $\pi_s : \C \to \C$
such that $s=\pi_s \otimes Id_E (t_0)$. Let $f: E^* \to \C$  be associated to $t_0$.
In particular we have  $t=\pi_t \otimes Id_E (t_0)$, or equivalently $h= \pi_t f$.
This shows that $f: E^* \to \C$ is also a c.i. embedding. 
Now for any c.c. $v : f(E^*) \to \C$, $vf: E^* \to \C$ is c.c. Let $t_{vf}: E \otimes \C$ be the associated tensor so that
$ \|t_{vf}\|_{\min}=1$.
Using $s=t_{vf}$ we find $\pi_s$ such that 
$s=\pi_s \otimes Id_E (t_0)$, or equivalently $vf=    \pi_s f$ which means
$$v_{ |f(E^*)} = {\pi_s}_{ |f(E^*)}.$$
Conversely, given $f$,
 we set $F=f(E^*)^*$,
 and let $t_0\in \C \otimes F$
be associated to the inclusion $f(E^*) \to \C$. Let $t\in B_{\C \otimes_{\min} F} $, then $t$ is associated to a c.c. map $v: f(E^*) \to \C$, so it follows easily that $F$ is controllable. Since $F$  and $E$ are c.i. the same is true for $E$.
\end{proof}

        Let $E$ be a f.d. operator space. Recall  
  $$ex(E)=\inf\{ \|u\|_{cb} \|u^{-1} \|_{cb} \}$$
  where the inf runs over all $n\ge \dim(E)$ and all isomorphisms $u$
  from $E$ to a subspace of $M_n$.
  
  Let $C_u^*\langle X \rangle $ denote the unital universal $C^*$-algebra of an operator space $X$ with the canonical inclusion $k_X: X\to C_u^*\langle X \rangle $.  This unital $C^*$-algebra is characterized by the property
  that  any c.c. map $v: X \to D$ into an arbitrary unital $C^*$-algebra
  admits a unique extension to a unital $*$-homomorphism $\dot v: C_u^*\langle X \rangle \to D$. 
  The algebra $C_u^*\langle X \rangle$ is
  generated by $X$, i.e. it is  the smallest unital
  $C^*$-subalgebra containing $k_X(X)$.
  See e.g. \cite[p.160]{P4} or \cite[\S 2.7]{P6} for details.
  
   A separable operator space $X$ is said to have the OLP if
  any c.c. $u: X \to C/I$ into an arbitrary  quotient $C^*$-algebra
  admits a c.c. lifting.
  Ozawa proved in \cite{Ozllp} that this holds if and only if $C_u^*\langle X \rangle $
  has the LP.
  The next statement is but a simple reformulation of the OLP.
  We state this here to emphasize the comparison with the  subspaces that we call rigid.
    \begin{pro}\label{grr} 
    A separable operator space $X$ has the 
    OLP
 if and only if
  there is a c.i. embedding $f: X \to \C$ such that
any c.c. $v: f(X) \to D$ into an arbitrary unital $C^*$-algebra
admits an extension $\hat v: \C \to D$ that is a unital $*$-homomorphism.
  \end{pro}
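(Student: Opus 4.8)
The plan is to exploit the universal property of $A:=C_u^*\langle X\rangle$ together with the LP of $\C$, passing between $X$ and $A$ through the canonical c.i.\ inclusion $k_X:X\to A$. Since $X$ is separable, $A$ is a separable unital $C^*$-algebra, so I may fix a surjective unital $*$-homomorphism $q:\C\to A$ (any separable unital $C^*$-algebra is a quotient of $C^*(\F_\infty)$). In both directions the witnessing embedding $f:X\to\C$ is produced as a lifting of $k_X$ through $q$.

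For the forward implication I would first regard $k_X:X\to A=\C/\ker q$ as a c.c.\ map into a quotient $C^*$-algebra. The OLP of $X$ then furnishes a c.c.\ lifting $f:X\to\C$ with $qf=k_X$, and the squeeze $\|x\|=\|k_X^{(n)}(x)\|=\|q^{(n)}f^{(n)}(x)\|\le\|f^{(n)}(x)\|\le\|x\|$, valid at every matrix level, shows that $f$ is completely isometric. For the extension property, given a c.c.\ $v:f(X)\to D$ with $D$ unital, the composite $vf:X\to D$ is c.c., so the universal property of $A$ yields a unital $*$-homomorphism $\rho:A\to D$ with $\rho k_X=vf$; setting $\hat v:=\rho q$ gives a unital $*$-homomorphism $\C\to D$ with $\hat v(f(x))=\rho(qf(x))=\rho(k_X(x))=v(f(x))$, so $\hat v$ extends $v$.

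For the converse I would start from an arbitrary c.c.\ $u:X\to C/I$ and, to make the unital hypothesis applicable, pass to the unitization: write $\tilde C$ for the unitization of $C$ and regard $u$ as valued in the unital algebra $\tilde C/I$, with unital quotient map $\tilde q:\tilde C\to\tilde C/I$. Then $w:=u\circ(f^{-1}):f(X)\to\tilde C/I$ is c.c.\ (as $f$ is c.i.), so the hypothesis provides a unital $*$-homomorphism $\hat w:\C\to\tilde C/I$ extending $w$. Since $\hat w$ is u.c.p.\ and $\C$ has the LP (Kirchberg's theorem, Remark \ref{r10}), there is a u.c.p.\ map $g:\C\to\tilde C$ with $\tilde q g=\hat w$, and I would take $\tilde u:=gf:X\to\tilde C$, which is c.c.\ and satisfies $\tilde q\tilde u=\hat wf=wf=u$.

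The step requiring genuine care — and the one I expect to be the main obstacle — is that $\tilde u$ a priori takes values in the unitization $\tilde C$ rather than in $C$ itself, whereas the OLP demands a c.c.\ lift into $C$. To close this gap I would use the canonical character $\chi:\tilde C\to\CC$ with $\ker\chi=C$, together with the induced character $\bar\chi$ on $\tilde C/I$ (so that $\bar\chi\tilde q=\chi$ and $\ker\bar\chi=C/I$). Since $u(x)\in C/I$, one computes $\chi(g(f(x)))=\bar\chi(\tilde q(g(f(x))))=\bar\chi(\hat w(f(x)))=\bar\chi(u(x))=0$, whence $g(f(x))\in\ker\chi=C$ automatically. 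Thus $\tilde u$ already maps $X$ into $C$, supplying the required c.c.\ lift and completing the converse; the remaining verifications are the routine complete-contractivity bookkeeping for the various composites.
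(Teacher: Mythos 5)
Your proof is correct. The forward implication is essentially the paper's argument: the paper also lifts $k_X$ through $q:\C\to C_u^*\langle X\rangle$ to get the c.i.\ map $f$, and although it phrases the extension as $\hat v=\pi\mu P$ (with $\sigma$ a homomorphic lifting of the identity, $\mu=\sigma^{-1}$, $P=\sigma q$), that map collapses to $\pi q$, which is exactly your $\rho q$; your presentation simply strips away the retraction scaffolding.

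Your converse, however, takes a genuinely different route. The paper's converse never returns to the definition of the OLP: it uses the hypothesis to build a unital $*$-homomorphism $\pi:\C\to C_u^*\langle X\rangle$ extending $f^{-1}$, notes that $\pi\hat f=\mathrm{Id}$ on $C_u^*\langle X\rangle$ (so $C_u^*\langle X\rangle$ is a unital $*$-homomorphic retract of $\C$ and inherits its LP), and then invokes Ozawa's equivalence ``$X$ has OLP $\Leftrightarrow$ $C_u^*\langle X\rangle$ has LP'' to conclude. You instead verify the OLP directly: given c.c.\ $u:X\to C/I$, you apply the hypothesis to $u f^{-1}$ with target the unitization $\tilde C/I$, lift the resulting unital $*$-homomorphism through $\tilde q:\tilde C\to\tilde C/I$ using Kirchberg's LP of $\C$, and then use the canonical character $\chi$ with $\ker\chi=C$ to check that $g\circ f$ automatically lands in $C$; all steps check out, including the identification $\ker\bar\chi=C/I$. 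The trade-off: the paper's argument is shorter and yields the structurally useful fact that $C_u^*\langle X\rangle$ is a retract of $\C$, but it leans on Ozawa's theorem in both directions; your argument uses Ozawa's theorem in neither direction (the forward implication needs only the definition of the OLP applied to $\C/\ker q$), so the proposition becomes self-contained modulo Kirchberg's LP of $\C$ and the universal property of $C_u^*\langle X\rangle$ --- at the cost of the unitization and character bookkeeping, which is precisely the kind of non-unital detail that Ozawa's equivalence packages away.
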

    \begin{proof} 
    Assume $X$ has the OLP. By \cite{Ozllp} 
     this holds if and only if $C_u^*\langle X \rangle $ has the LP.
   We set $k=k_X: X \to C_u^*\langle X \rangle $ for simplicity.
  Let $q: \C \to C_u^*\langle X \rangle$ be a surjective unital $*$-homomorphism.
Then $k  $ admits  a c.c. lifting $f: X \to \C$, and the latter extends to a  unital
$*$-homomorphism $\sigma: C_u^*\langle X \rangle\to \C$. 
Note that $\sigma$ is a lifting of the identity on $C_u^*\langle X \rangle $,
and hence $\sigma$ defines a $*$-isomorphism between
$C_u^*\langle X \rangle $  and $\sigma(C_u^*\langle X \rangle )$. 
Let $\mu=\sigma^{-1}: \sigma(C_u^*\langle X \rangle ) \to C_u^*\langle X \rangle$.
Let $P=\sigma q: \C \to \C$.
Note that $P$ is a projection onto $\sigma(C_u^*\langle X \rangle)$ and $P$ is a unital $*$-homomorphism.
Let $v: f(X) \to D$ be a c.c. map.
Since $v f: X \to D$ is c.c. it admits an extension
$\pi: C_u^*\langle X \rangle \to D$ which is a unital $*$-homomorphism.
The composition $\hat v:=\pi   \mu P: \C \to D$ is a unital $*$-homomorphism that extends
$v: f(X)  \to D$.\\
Conversely, assume there is $f$ as in Proposition \ref{grr}.
We have a unital $*$-homomorphism
$\hat f: C_u^*\langle X \rangle \to \C$ extending $f$,
and also a unital $*$-homomorphism $\pi : \C \to C_u^*\langle X \rangle$ extending $f^{-1}_{|f(X)}$. Clearly the composition 
$\pi \hat f$ is the identity on $C_u^*\langle X \rangle$, which shows
  the latter factors through $\C$ via unital $*$-homomorphisms.
Since $\C$ has the LP,  $C_u^*\langle X \rangle$ also does.
Equivalently, $X$ has the OLP.
  \end{proof}
  \begin{cor} 
(i) Any f.d. $E$ with $ex(E)=1$ 
(e.g. $E=\ell_\infty^n$ or $E=M_n$) is controllable.
  \\ (ii) More generally, if $E$ locally embeds  in a WEP and LP $C^*$-algebra (e.g. in a nuclear algebra),  then   $E$ is controllable.
  \end{cor}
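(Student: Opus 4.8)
The plan is to deduce (i) from (ii), and to prove (ii) by a perturbation argument that never forms a limit of maps (where the maximal tensor product would misbehave) but instead transports controllability through the submultiplicativity of $\|\cdot\|_{mM}$.

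For (i): the hypothesis $ex(E)=1$ says that for each $\vp>0$ there is an embedding $u:E\to M_n$ with $\|u\|_{cb}\|u^{-1}\|_{cb}<1+\vp$, and $M_n$ sits completely isometrically in the algebra $\mathcal{K}$ of compact operators, so $E$ locally embeds (in the sense of Definition \ref{d10}) in $\mathcal{K}$. Since $\mathcal{K}$, or its unitization, is nuclear, hence WEP, and separable nuclear, hence LP by Choi--Effros (Remark \ref{r10}), part (i) is the special case of (ii) with $A$ this nuclear algebra. The stated examples qualify: $M_n$ itself and $\ell_\infty^n$ embedded as the diagonal of $M_n$ both have $ex=1$.

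For (ii): fix $\vp>0$. Since $E$ locally embeds in $A$, choose a f.d. subspace $F\subset A$ together with a complete isomorphism $u:E\to F$, normalised so that $\|u\|_{cb}=1$ and $\|u^{-1}\|_{cb}<1+\vp$. Because $A$ has WEP and LP it satisfies \eqref{e15} (by Theorem \ref{t15}; a nuclear $A$ satisfies \eqref{e15} directly), so Proposition \ref{LP1} shows that the f.d. subspace $F\subset A$ is controllable. Now fix a completely isometric embedding $i_E:E\to B(H)$ and factor it as $i_E=(i_Eu^{-1})\circ u$, where $i_Eu^{-1}:F\to B(H)$. The key point is that the domain $F$ is controllable and the target $B(H)$ is WEP, so the last assertion of Lemma \ref{l11} gives $\|i_Eu^{-1}\|_{mM}=\|i_Eu^{-1}\|_{cb}=\|u^{-1}\|_{cb}<1+\vp$, the final equality because $i_E$ is completely isometric. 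From the definition \eqref{mM} one has the submultiplicativity $\|vw\|_{mM}\le\|v\|_{mM}\|w\|_{cb}$, obtained by factoring $Id_{\ell_\infty(\C)}\otimes(vw)=(Id\otimes v)\circ(Id\otimes w)$ through $\ell_\infty(\C)\otimes_{\min}F$ and using that $Id\otimes w$ has $\min$-to-$\min$ norm $\|w\|_{cb}$. Applying this with $w=u$ and $v=i_Eu^{-1}$ yields $\|i_E\|_{mM}\le\|i_Eu^{-1}\|_{mM}\,\|u\|_{cb}<1+\vp$. Letting $\vp\to0$ gives $\|i_E\|_{mM}\le1$, and since $i_E$ is a completely isometric map into the WEP algebra $B(H)$, the implication (iii)$\Rightarrow$(i) of Theorem \ref{t11} shows that $E$ is controllable.

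The one genuinely delicate point is the transfer from the approximating spaces $F$ to $E$ itself. Each $F$ is controllable on the nose, but $E$ is only $(1+\vp)$-completely isomorphic to it, and the naive idea of packaging the approximants into an $\ell_\infty$-product (or an ultraproduct) and passing to a completely isometric limit fails, because the maximal tensor product does not commute with infinite products --- precisely the phenomenon responsible for $\ell_\infty(\C)$ failing the LLP. The device that circumvents this is to keep $E$ (and $i_E$) fixed, write $i_E=(i_Eu^{-1})\circ u$, and exploit that $\|\cdot\|_{mM}$ collapses to $\|\cdot\|_{cb}$ on maps out of the \emph{controllable} space $F$ into a WEP target; submultiplicativity then carries the estimate back to $i_E$ without ever forming a limit of maps.
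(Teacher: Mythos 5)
Your proof is correct, but it takes a route that differs from the paper's on both parts, and in one respect it is more complete. For (i) the paper does not reduce to (ii): it invokes Ozawa's theorem that $ex(E)=1$ if and only if $E^*$ has the OLP, and then combines Proposition \ref{grr} with Lemma \ref{LP4}, so that (i) is obtained through the rigidity/duality picture ($E$ is controllable iff $E^*$ is c.i. to a rigid subspace of $\C$); your reduction of (i) to (ii), via the local embedding of $E$ into the (unitization of the) compacts, is valid and more elementary, bypassing Ozawa's result entirely, though it loses the conceptual link with the OLP that the paper emphasizes in that section. For (ii) the paper's argument is two lines: take the inclusion $u:E\to B$ with $B$ WEP and LP, note $\|u\|_{mM}=1$ by Proposition \ref{p1}, and conclude by Theorem \ref{t11}; as written this treats only a genuine completely isometric embedding $E\subset B$ and leaves the passage from the hypothesis ``locally embeds'' implicit. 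Your factorization $i_E=(i_Eu^{-1})\circ u$, together with the controllability of the approximating subspace $F$ (Proposition \ref{LP1} plus the WEP of $A$), the collapse $\|\cdot\|_{mM}=\|\cdot\|_{cb}$ for maps from a controllable space into a WEP target (last assertion of Lemma \ref{l11}), and the submultiplicativity $\|vw\|_{mM}\le\|v\|_{mM}\|w\|_{cb}$, supplies exactly the perturbation step that the paper's proof of (ii) glosses over; this is the genuinely delicate point, correctly identified and soundly handled in your write-up. In short: the paper's proof of (i) is exact (no $\vp$) and tied to the OLP/rigidity theme, while yours is uniform --- (i) becomes literally a special case of (ii) --- and makes the local-to-isometric transfer in (ii) explicit rather than implicit.
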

   \begin{proof} By
  \cite{Ozllp}  a f.d. operator space $E^*$ satisfies the OLP if and only if
  $ex(E)=1$. Thus (i) follows from  Proposition \ref{grr} and
  Lemma \ref{LP4}.
  \\
  Let $u: E \to B$ be an inclusion with $B$  having WEP and LP.
  By Proposition \ref{p1} we have $\|u\|_{mM} =1$ and hence
  (ii) follows from Theorem \ref{t11}.
\end{proof}

  \begin{rem} Let $n\ge 3$.
  We conjecture that $\ell_1^n$ is not controllable.
  Equivalently (see Lemma \ref{LP4}) we conjecture that $\ell_\infty^n$
  is not c.i. to a rigid subspace of $\C$.\\
  Note that  $\ell_\infty^n$ fails the OLP and hence
 fails the stronger rigidity property 
  in Proposition \ref{grr}. Our conjecture boils down to the failure of the latter for $D=\C$.
  \end{rem}
  Nevertheless, the next statement shows that  $\ell_\infty^n$ is  somewhat ``close" to  being c.i. to a rigid o.s.
  
 \begin{pro} There is a c.i. and c.p.  map $f: \ell_\infty^n \to \C$ such that
 for any unital $C^*$-algebra $D$ 
 any contractive  and  positive map $u: f(\ell_\infty^n) \to D$ extends
 to a unital $*$-homomorphism $\pi: \C \to D$.
 \end{pro}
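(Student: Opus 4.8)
The plan is to realize the desired embedding through a universal $C^*$-algebra tailored to positive contractive maps, and then transport it into $\C$ by a lifting argument. First I would introduce the universal unital $C^*$-algebra $U$ generated by positive contractions $b_1,\ldots,b_n$ subject to the single relation $\sum_{i=1}^n b_i\le 1$ (i.e. $b_i=b_i^*\ge 0$ and $1-\sum_i b_i\ge 0$); since the generators are norm bounded this is a genuine separable unital $C^*$-algebra, with the defining property that for every unital $C^*$-algebra $D$ and every family $(q_i)$ of positive contractions in $D$ with $\sum_i q_i\le 1$ there is a unital $*$-homomorphism $\rho:U\to D$ with $\rho(b_i)=q_i$. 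The crucial point is that $f_0:\ell_\infty^n\to U$, $e_i\mapsto b_i$, is completely positive and completely isometric. Complete positivity is clear since $b_i\ge 0$. For complete isometry I would show, for $x_1,\ldots,x_n\in M_k$, that $\|\sum_i b_i\otimes x_i\|_{M_k(U)}=\max_i\|x_i\|$: the lower bound comes from choosing the $q_i$ to be mutually orthogonal projections with $\sum q_i\le 1$ (a block-diagonal image), and the upper bound follows for any admissible $(q_i)$ in any $D\subset B(H)$ from a Cauchy--Schwarz estimate, writing $q_i\otimes x_i=(q_i^{1/2}\otimes 1)(q_i^{1/2}\otimes x_i)$ and using $\sum_i q_i\otimes x_i^*x_i\le (\max_i\|x_i\|^2)(\sum_i q_i)\otimes 1\le (\max_i\|x_i\|^2)\,1$ together with $\sum_i q_i\otimes 1\le 1$.

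Next I would transport $f_0$ into $\C$. Fix a surjective unital $*$-homomorphism $\phi:\C\to U$ (every separable unital $C^*$-algebra is a quotient of $\C=C^*(\F_\infty)$). Let $\tilde f_0:\ell_\infty^{n+1}\to U$ be the unital completely positive map with $\tilde f_0(e_i)=b_i$ for $i\le n$ and $\tilde f_0(e_{n+1})=1-\sum_{i\le n}b_i\ge 0$. By the LP of the nuclear algebra $\ell_\infty^{n+1}$ (Choi--Effros, see Remark \ref{r10}), $\tilde f_0$ lifts to a u.c.p. map $f':\ell_\infty^{n+1}\to\C$ with $\phi f'=\tilde f_0$. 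Let $f:\ell_\infty^n\to\C$ be the restriction of $f'$ to $\mathrm{span}\{e_1,\ldots,e_n\}$; then $f$ is completely positive with $\|f\|_{cb}\le 1$ and $\phi f=f_0$. Since $f_0$ is completely isometric and $\phi$ is completely contractive, the inequalities $\|x\|=\|f_0^{(k)}(x)\|=\|\phi^{(k)}f^{(k)}(x)\|\le\|f^{(k)}(x)\|\le\|x\|$ force all terms to be equal, so $f$ is completely isometric. Thus $f$ is the required c.i. and c.p. map, with $\phi(f(e_i))=b_i$.

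Finally I would verify the extension property. Given a positive contractive $u:f(\ell_\infty^n)\to D$, set $q_i=u(f(e_i))\ge 0$. Then $uf:\ell_\infty^n\to D$ is positive (as $f$ is c.p. and $u$ positive) with $\|uf\|\le 1$; since a positive map on a unital $C^*$-algebra attains its norm at the unit, $\|\sum_i q_i\|=\|uf(1)\|=\|uf\|\le 1$, whence $0\le\sum_i q_i\le 1$. The universal property of $U$ then gives a unital $*$-homomorphism $\rho:U\to D$ with $\rho(b_i)=q_i$, and $\pi:=\rho\phi:\C\to D$ is a unital $*$-homomorphism with $\pi(f(e_i))=\rho(b_i)=q_i=u(f(e_i))$, so $\pi$ extends $u$ by linearity on $\mathrm{span}\{f(e_i)\}=f(\ell_\infty^n)$.

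I expect the main obstacle to be the complete-isometry identification $\mathrm{span}\{b_i\}\cong\ell_\infty^n$ inside $U$ — in particular the Cauchy--Schwarz upper bound, which is exactly what makes the slack relation $\sum_i b_i\le 1$ (rather than $\sum_i b_i=1$) achieve the goal — together with the squeeze argument ensuring that the c.p. lift $f$ stays completely isometric. It is worth noting that the relation must be $\le 1$ and not $=1$: were $f$ unital, the zero map (a positive contraction) could not extend to a unital $*$-homomorphism, so the non-unitality of $f$ is essential.
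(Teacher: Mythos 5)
Your proof is correct, and it takes a genuinely different route from the paper's. The paper encodes all positive contractions at once by forming the (non-separable) sum $\ell_\infty(\{D_w\})$ over the set of positive contractions $w:\ell_\infty^n\to D$ with $D$ separable unital, maps $\ell_\infty^n$ into it by the positive isometry $v=\oplus_w w$, lifts $v$ through a quotient map $C^*(\F)\to\ell_\infty(\{D_w\})$ using Vesterstr\o m's positive lifting theorem, gets complete positivity and complete isometry of the lift for free from the automatic facts that positive maps out of $\ell_\infty^n$ are c.p. and that norm equals cb-norm for maps into or out of $\ell_\infty^n$, and obtains the required extensions by composing the quotient map with coordinate projections (then passing from $C^*(\F)$ to $\C$ via Lemma \ref{nsf}). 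You instead package the same universality into the separable universal unital $C^*$-algebra $U=C^*\langle\, b_i=b_i^*\ge 0,\ \textstyle\sum_i b_i\le 1\,\rangle$, whose universal property is literally the desired extension statement; you check by hand that $\mathrm{span}\{b_i\}\simeq\ell_\infty^n$ completely isometrically (orthogonal projections for the lower bound, the Cauchy--Schwarz estimate for the upper bound --- this is exactly where the slack relation $\sum_i b_i\le 1$ does its work); and you replace Vesterstr\o m's theorem by the u.c.p. lifting (Choi--Effros, Remark \ref{r10}) of the nuclear algebra $\ell_\infty^{n+1}$ via the unitization trick $e_{n+1}\mapsto 1-\sum_i b_i$, together with a squeeze argument showing the restricted lift stays completely isometric. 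The trade-offs: your route avoids the external citation of Vesterstr\o m, resting only on u.c.p. lifting for f.d. commutative algebras, and since $U$ is separable with a universal property valid for \emph{arbitrary} unital $D$, no reduction to separable targets is needed; the paper's route is shorter given its cited tools, requires no construction of a universal algebra on relations, and its positivity-based lifting is what adapts directly to the $n$-positive variant mentioned in the remark following the proposition. Your closing observation is also correct and worth keeping: the relation must be $\sum_i b_i\le 1$ rather than $=1$, since a unital $f$ would make the zero map a positive contraction admitting no unital $*$-homomorphic extension.
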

  \begin{proof}
  Let $I=\{w: \ell_\infty^n \to D\mid \text{positive contraction into} \ D \text{ unital separable}  \}$. 
  Let $v: \ell_\infty^n \to \ell_\infty(\{D_w\}) $ be defined by $v(x)=\oplus_{w\in I}  w(x)$.
  Then $v$ is a positive isometry. Let $q: C^*(\F) \to \ell_\infty(\{D_w\}) $ be a surjective unital $*$-homomorphism, for some large enough free group $\F$.
  By a  result  due to Vesterstr\o m \cite{Ves} 
  there is a positive (still isometric)  lifting $\tilde v:  \ell_\infty^n \to C^*(\F)$.
   Note that $\tilde v  $ is c.i. and c.p. because on one hand for maps into $\ell_\infty^n$ the
  norm and the cb-norm are equal, and on the other hand this also holds for positive maps on $\ell_\infty^n$ 
  and the latter are c.p. 
     It follows that any positive contraction $w: \tilde v(\ell_\infty^n) \to D$ is the restriction of a unital
  $*$-homomorphism.
Indeed,  by the same argument
  as in Proposition \ref{LP1}
   $w\tilde v : \ell_\infty^n \to D_w$
  is of the form $\pi \tilde v$ for some unital $*$-homomorphism $\pi: C^*(\F) \to D_w$. Thus we may take $f=\tilde v$
  except that we must replace $ C^*(\F)$ by $\C$ using  Lemma \ref{nsf}.
     \end{proof}
  \begin{rem}
  We could replace positive by $n$-positive for any fixed $n\ge 1$.
  \end{rem}
  
  \begin{rem}
  If we drop positive
  and set $I=\{w: \ell_\infty^n \to D\mid \text{contraction into} \ D \text{ unital separable}  \}$,
   we obtain  an isometric $f: \ell_\infty^n \to \C$ such that
 for any unital separable $C^*$-algebra $D$ 
 any contractive    map $u: f(\ell_\infty^n) \to D$ extends
 to a unital $*$-homomorphism $\pi: \C \to D$.
But this is nothing but the embedding of
$X=\ell_\infty^n$ equipped with its maximal o.s. structure
(which has the OLP) 
provided by Proposition \ref{grr}.
  \end{rem}
        
      \section{Main point}
     
     In the next theorem 
     we relate the problem whether LLP $\Rightarrow$ LP
     for WEP $C^*$-algebras to the notion of controllable subspace.
     Note that   (i) $\Rightarrow$ (vii) has already been proved
     in Proposition \ref{3}.

  \begin{thm}\label{t1} The following assertions are equivalent:
  
  \item{\rm (i)} There is a separable $C^*$-algebra
  $A$ with WEP and LP such that 
  $\C$  locally embeds in $A$ (and hence $A$ is not nuclear).
     \item{\rm (ii)}
    For any f.d. subspace $E\subset \C$  and any $\vp>0$
    there is a  subspace $F\subset \C$ with $d_{cb}(E,F)<1+\vp$
    such that 
    the min and max norms coincide on $F \otimes \C$.
    In other words the inclusion $F\to \C$ is $\C$-nuclear.
      \item{\rm (iii)} For any f.d. subspace $E\subset \C$, any 
      complete contraction $v: E \to B$ with values in  a WEP $C^*$-algebra $B$
      is   $\ell_\infty(\C)$-nuclear.
      
          \item{\rm (iv)} Any complete contraction
          $u: \C \to \B$  is   $\ell_\infty(\C)$-nuclear.
          
      \item{\rm (v)} The mapping $j$ is $\ell_\infty(\C)$-nuclear.

         \item{\rm (vi)} Any f.d. subspace $E\subset \C$   
         is controllable.
          
          \item{\rm (vii)} For separable 
  WEP (or    QWEP) $C^*$-algebras)  the LLP implies the LP.
        
  \end{thm}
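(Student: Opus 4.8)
The plan is to prove the chain of equivalences in two stages: first dispose of the purely finite-dimensional conditions (ii)--(vi), which I expect to collapse quickly using Lemma~\ref{l11} and Theorem~\ref{t11}, and then bridge this cluster to the two ``global'' existence statements (i) and (vii), one direction of which, (i)$\Rightarrow$(vii), is already recorded in Proposition~\ref{3}. Throughout I will use the reduction $\|u\|_{mM}=\sup_E\|u_{|E}\|_{mM}$, the supremum taken over finite-dimensional $E\subset\C$, which is legitimate because every element of $\ell_\infty(\C)\otimes\C$ is finitely supported in the second leg and $\otimes_{\min}$ is injective.

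For the cluster I would argue as follows. The implications (vi)$\Rightarrow$(iii), (vi)$\Rightarrow$(iv), (vi)$\Rightarrow$(v) all follow from the last assertion of Lemma~\ref{l11}: once every $E\subset\C$ is controllable and the target is WEP (as $\B=B(\ell_2)$ is, and as the $B$ of (iii) is by hypothesis), one has $\|u_{|E}\|_{mM}=\|u_{|E}\|_{cb}$, so any complete contraction into a WEP algebra is automatically $\ell_\infty(\C)$-nuclear. Conversely (iii)$\Rightarrow$(vi), (iv)$\Rightarrow$(vi), (v)$\Rightarrow$(vi) come from applying the hypothesis to $j$ (or its restriction $j_{|E}:E\to\B$), which is a complete isometry with $\|j_{|E}\|_{mM}\le1$, and invoking Theorem~\ref{t11}(iii) with the WEP algebra $\B$. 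Since (iv)$\Rightarrow$(v) is trivial, this yields (iii)$\Leftrightarrow$(iv)$\Leftrightarrow$(v)$\Leftrightarrow$(vi). Finally (vi)$\Rightarrow$(ii) is immediate from Theorem~\ref{t11}(v): a controllable $E$ admits a c.i.\ copy $F=f(E)\subset\C$ on which $\min=\max$ against $\C$, so one may even take $d_{cb}(E,F)=1$.

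The technical heart is the reverse passage (ii)$\Rightarrow$(vi), which I would isolate as the statement that \emph{controllability is stable under $d_{cb}$-limits}: if for every $\vp>0$ there is a controllable $F$ with $d_{cb}(E,F)<1+\vp$, then $E$ is controllable. I expect the naive attempt through the $mM$- or maximal tensor norm to fail here, since one cannot bound a maximal tensor norm \emph{from above} along such an approximation. The fix is to argue instead with the metric-surjection picture of Proposition~\ref{31}, because $\otimes_{\min}$ is functorial for completely bounded maps in \emph{both} legs. Concretely, given a complete isomorphism $w:E\to F$ with $\|w\|_{cb}\le1$ and $\|w^{-1}\|_{cb}<1+\vp$, and a lifting of $B_{\ell_\infty(I,\C)\otimes_{\min}F}$ through $q\otimes Id_F$, one transports it by the commuting maps $Id\otimes w^{\pm1}$ to lift $B_{\ell_\infty(I,\C)\otimes_{\min}E}$ through $q\otimes Id_E$ with norm increase at most $(1+\vp)$; letting $\vp\to0$ shows $q\otimes Id_E$ is a metric surjection, i.e.\ $E$ is controllable. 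Feeding in the controllable $F\subset\C$ from (ii) gives (ii)$\Rightarrow$(vi).

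It remains to splice in (i) and (vii). Combining Proposition~\ref{p1} with Lemma~\ref{l11} and Theorem~\ref{t11}(iii), a WEP algebra $A$ has the LP if and only if every finite-dimensional $E\subset A$ is controllable (for WEP $A$ one has $\|i_E\|_{mM}=\|i_E\|_{cb}=1$ exactly when $E$ is controllable). Hence (i)$\Rightarrow$(vi): if $A$ is WEP with LP then all its finite-dimensional subspaces are controllable, and since $\C$ locally embeds in $A$, every $E\subset\C$ is a $d_{cb}$-limit of such subspaces, so controllable by the stability principle above. For the return arrow I would invoke the separable, non-nuclear, WEP and LLP algebra $A$ of \cite{155}, in which $\C$ locally embeds: under (vi) its finite-dimensional subspaces, being $d_{cb}$-limits of subspaces of $\C$, are controllable, whence $A$ has the LP and (i) holds; while under (vii) the same $A$, being WEP with LLP, has the LP directly, giving (vii)$\Rightarrow$(i). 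Together with Proposition~\ref{3} for (i)$\Rightarrow$(vii) this closes the loop. The two steps I expect to demand the most care are the $d_{cb}$-stability of controllability (carried out in the minimal-tensor/metric-surjection framework of Proposition~\ref{31}, not the maximal one) and the verification that the examples of \cite{155} genuinely admit a local embedding of $\C$.
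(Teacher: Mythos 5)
Your proposal is correct, but it follows a genuinely different architecture from the paper's. The paper proves one cycle (i)~$\Rightarrow$~(ii)~$\Rightarrow \cdots \Rightarrow$~(vii)~$\Rightarrow$~(i), and its two substantive steps use tools you avoid: for (i)~$\Rightarrow$~(ii) it takes a surjection $q:\C\to A$, a u.c.p.\ lifting $r:A\to\C$ (from the LP), sets $Y=r(A)$ with the u.c.p.\ projection $P=rq$, and transfers the max norm via $Y\otimes_{\max}\C\simeq A\otimes_{\max}\C$ to get $\min=\max$ on $Y\otimes\C$; for (ii)~$\Rightarrow$~(iii) it invokes Kirchberg's extension theorem (Proposition~\ref{ext6}) together with \eqref{eglo} to extend $vu^{-1}:F\to B$ to $g:\C\to B$ with controlled mb-norm. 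You replace both of these steps, as well as the bridges between (i)/(vii) and the finite-dimensional cluster, by a single new lemma: stability of controllability under $d_{cb}$-approximation, proved on the minimal-tensor side through the metric-surjection characterization of Proposition~\ref{31}. That lemma is sound as you argue it --- the transport by $Id\otimes w^{\pm1}$ commutes with $q\otimes Id$, the $\min$ norm (unlike $\max$) is functorial under c.b.\ maps, the $(1+\vp)(1+\delta)$ slack vanishes by the quotient-norm argument since $q\otimes Id_E$ is contractive, and Proposition~\ref{31} converts the resulting metric surjection back into controllability. Your reorganization buys several things: it is self-contained (no appeal to Proposition~\ref{ext6}); it gives (vi)~$\Rightarrow$~(i) directly without passing through (vii); and it makes explicit the transfer step that the paper's very terse proof of (vi)~$\Rightarrow$~(vii) leaves implicit (how f.d.\ subspaces of a WEP, LLP algebra inherit controllability from nearby subspaces of $\C$ --- in the paper's scheme this is supplied by the already-established (iii), in yours by the stability lemma). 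What the paper's route buys in exchange is economy: leaning on Kirchberg's extension theorem, it never needs to check that controllability survives $d_{cb}$-limits. Your remaining arrows --- the cluster (iii)~$\Leftrightarrow$~(iv)~$\Leftrightarrow$~(v)~$\Leftrightarrow$~(vi) via Lemma~\ref{l11} and Theorem~\ref{t11}, (vi)~$\Rightarrow$~(ii) via Theorem~\ref{t11}, the LP criterion from Proposition~\ref{p1} (equivalently Proposition~\ref{LP1} plus WEP), and the use of the algebra of \cite{155} for (vii)~$\Rightarrow$~(i) --- coincide with the paper's.
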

   \begin{proof} (i) $\Rightarrow$ (ii) :   After unitization, we may assume $A$ unital. Let $q: \C \to A$ be a surjective $*$-homomorphism. By the LP
   of $A$ we have a u.c.p. lifting $r: A \to \C$. Let $Y=r(A)$. Then  
    $Y\subset \C$ is completely isometric to $A$.
   Moreover,  we have a u.c.p. projection    $P:  \C \to Y$ given by $P=rq$. 
Therefore the min and max norms
      are identical on $Y\otimes \C$.
      Indeed,  this follows from the WEP of $A$ once we observe
      $$Y \otimes_{\max} \C \simeq A\otimes_{\max} \C .$$
     To check the latter note that the isomorphism $q_{|Y}:Y \to A$ with inverse $r: A\to Y$ 
      induces an isomorphism  
      $q\otimes Id_{\C} : Y \otimes_{\max} \C \to A\otimes_{\max} \C $
      with inverse 
       $r\otimes Id_{\C} : A \otimes_{\max} \C \to  Y\otimes_{\max} \C $.
       \\ Now for any $E\subset \C$ there is $F\subset Y$
   with $d_{cb}(E,F)<1+\vp$, whence $u: E \to F$ with
   $\|u\|_{cb} \|u^{-1}\|_{cb}<1+\vp$.
      Now   the min and max norms
      are identical on $F \otimes \C \subset Y\otimes \C$.
        This yields (ii).\\
   (ii) $\Rightarrow$ (iii) :  
   Let $u: E \to F$ be as before and $\vp>0$. 
   By  Lemma \ref{l1} the inclusion 
   $i_F: F\to \C$ is  $\ell_\infty(\C)$-nuclear. Then, 
   by Proposition \ref{ext6} and \eqref{eglo}, 
   since $B$ has the WEP,
   $vu^{-1}: F \to B$ extends to a map $g: \C \to B$ with
   $\|g\|_{mb} \le \|u^{-1}\|_{cb} (1+\vp)$ such that $g i_F=v u^{-1}$ . We have
   $v_{|E} = g  i_F u $.
   It follows that if $c=  \|u \|_{cb}\|u^{-1}\|_{cb}(1+\vp)$ the map
   $(1/c) v_{|E}$ is $\ell_\infty(\C)$-nuclear.
   Since this holds for any $c>1$ 
   we obtain (iii).
   \\
    (iii) $\Rightarrow$ (iv) $\Rightarrow$ (v) : obvious.  \\
       Theorem \ref{t11} shows that (v)  $\Rightarrow$ (vi).\\
     (vi) $\Rightarrow$ (vii)  : 
         Assume (vi). Let $A$ be QWEP and LLP.
         By Remark \ref{rr2'} $A$ is automatically WEP and hence $\C \otimes_{\min} E=
         \C \otimes_{\max} E$ for any $E\subset A$.
         Then  $A$ has the LP by the criterion in Proposition \ref{LP1}.
         
   (vii) $\Rightarrow$ (i)  :  obvious by \cite{155}. Indeed, it is proved there 
   that there is an $A$ with WEP and LLP   such that $\C$ locally embeds in $A$. 
    \end{proof}
  
  \begin{rem} By Lemma \ref{LP4} the properties in Theorem \ref{t1} are also equivalent
  to\\
  \item{\rm (vi)*} Any f.d. subspace $E\subset \C$  is c.i. to a rigid subspace of $\C$.
  \end{rem}
  
  \begin{pro}\label{p16}  Let $A$ be  any separable WEP, LLP
  $C^*$-algebra such that  $\C$ locally embeds in $A$. Let 
  $X$ be an o.s. that locally embeds in $\C$ and let
  $j_X: X \to B(H)$ be a c.i. embedding.
   There is a net of c.c. maps $v_i: X \to A$, $w_i: A\to B(H)$
   such that the composition $w_iv_i : X \to B(H)$ tends pointwise
   to $j_X$.
  \end{pro}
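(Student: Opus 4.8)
The plan is to follow the scheme of the proof of Proposition~\ref{pp1}, but since the source $X$ is only an operator space rather than a $C^*$-algebra, in place of the WEP of the source I would invoke the extension property of Proposition~\ref{ext6}, and in place of a WEP target I would use the injectivity of $B(H)$. First I would note that $X$ locally embeds in $A$: this follows by transitivity of local embedding (Definition~\ref{d10}) from the hypotheses that $X$ locally embeds in $\C$ and $\C$ locally embeds in $A$, simply by composing the two $d_{cb}$-estimates. I index the sought net by the directed set of pairs $i=(E,\vp)$, where $E\subset X$ is a finite-dimensional subspace and $\vp>0$, ordered by declaring $(E,\vp)\le(E',\vp')$ when $E\subset E'$ and $\vp\ge\vp'$.

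Fix $i=(E,\vp)$. Since $X$ locally embeds in $A$, I choose a f.d. subspace $F\subset A$ and a complete isomorphism $u\colon E\to F$ normalized so that $\|u\|_{cb}\le 1$ and $\|u^{-1}\|_{cb}<1+\vp$. The two legs of the factorization are built by extension. For the first leg, since $X$ locally embeds in $\C$ and $A$ is WEP, Proposition~\ref{ext6} extends $u\colon E\to A$ to a map $v\colon X\to A$ with $v_{|E}=u$ and $\|v\|_{cb}\le(1+\vp)\|u\|_{cb}\le 1+\vp$. For the second leg, the map $j_X u^{-1}\colon F\to B(H)$ satisfies $\|j_X u^{-1}\|_{cb}=\|u^{-1}\|_{cb}<1+\vp$ because $j_X$ is completely isometric; since $B(H)$ is injective, Wittstock's extension theorem extends it to $w\colon A\to B(H)$ with $w_{|F}=j_X u^{-1}$ and $\|w\|_{cb}=\|u^{-1}\|_{cb}<1+\vp$.

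The verification is then formal. For $\xi\in E$ we have $w(v(\xi))=w(u\xi)=j_X u^{-1}(u\xi)=j_X(\xi)$, so $(wv)_{|E}=(j_X)_{|E}$. Setting $v_i=(1+\vp)^{-1}v$ and $w_i=(1+\vp)^{-1}w$ makes both maps completely contractive, with $(w_iv_i)_{|E}=(1+\vp)^{-2}(j_X)_{|E}$. Given $x\in X$, for every index $i=(E,\vp)$ with $x\in E$ one gets $\|w_iv_i(x)-j_X(x)\|=(1-(1+\vp)^{-2})\|j_X(x)\|$, which tends to $0$ along the net; hence $w_iv_i\to j_X$ pointwise, as desired.

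The only step carrying real content is the construction of the first leg $v\colon X\to A$: this is precisely where the WEP of $A$ and the local embeddability of $X$ in $\C$ enter, via the Kirchberg-type extension result of Proposition~\ref{ext6} (which is why I expect that step to be the main obstacle, and which also tacitly requires $X$ to be separable, as we assume throughout). The remaining ingredients---locating $F$, extending the second leg into the injective algebra $B(H)$, the symmetric rescaling to achieve complete contractivity, and the directed-set bookkeeping---are all routine.
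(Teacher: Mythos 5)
Your proof is correct and follows essentially the same route as the paper's: factor through a nearly isometric copy $F\subset A$ of each f.d. $E\subset X$, extend one leg into $A$ via Proposition \ref{ext6} (using the WEP of $A$), extend the other into $B(H)$ by injectivity, and rescale by $(1+\vp)^{-1}$ over the net of pairs $(E,\vp)$. Your explicit remark that Proposition \ref{ext6} forces $X$ to be separable is a fair observation, as the paper uses it tacitly as well.
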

   \begin{proof}
   Let $E\subset X$ be a f.d. subspace and let $\vp>0$.
    We have an  embedding $v: E\to A$ with $\|v\|_{cb} \le 1$
    and $\|{v^{-1}}_{|v(E)}\|_{cb} \le 1+\vp$. By Proposition \ref{ext6}
    the map $v$ 
    admits an extension $\tilde v : X \to A$ with
    $\|\tilde v\|_{cb} \le 1+\vp$. By the injectivity of $B(H)$ 
    there is a map $w: A \to B(H)$ extending $ {j_X }_{|E}{v^{-1}}_{|v(E)}: v(E) \to B(H)$ with $ \|w\|_{cb} \le 1+\vp$. 
Clearly ${w \tilde v}_{|E}= {j_X}_{|E}$. 
Thus we may index the desired net by pairs $(E,\vp)$ and  for
$i=(E,\vp)$   setting $v_i=(1+\vp)^{-1} \tilde v$ and $w_i=(1+\vp)^{-1} w$
gives the announced net.
   \end{proof}
  
 \begin{pro}\label{iglo} Assume that the equivalent properties in Theorem \ref{t1} hold. Let $X$ be a 
  separable o.s. that locally embeds in $\C$.
  Then there is a c.i. embedding $f: X \to \C$
 such that the min and max norms agree on $f(X)\otimes \C$.
  \end{pro}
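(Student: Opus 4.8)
The plan is to reduce the global statement to the finite-dimensional information packaged in Theorem \ref{t1} in two moves: first produce a single completely isometric embedding of $X$ into a good \emph{WEP + LP} algebra $A$, and then push that embedding into $\C$ by means of the u.c.p.\ lifting supplied by the LP of $A$.

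First I would invoke Theorem \ref{t1}(i) to fix a separable $C^*$-algebra $A$ that is WEP and LP and into which $\C$ locally embeds; since LP implies LLP, the algebra $A$ is WEP and LLP, so (by the fact recalled just before Proposition \ref{ext6}) $A$ itself locally embeds in $\C$. Because $X$ locally embeds in $\C$ and $\C$ locally embeds in $A$, the space $X$ locally embeds in $A$. The crux is to upgrade this local embeddability into an \emph{honest} completely isometric embedding $\iota\colon X\to A$. Writing $X=\overline{\bigcup_n E_n}$ with $E_n$ finite-dimensional and increasing, I would build near-completely-isometric maps $\phi_n\colon E_n\to A$ inductively so that $\phi_{n+1}$ agrees with $\phi_n$ on $E_n$ up to a summable error while the complete-isometry distortion tends to $1$; the pointwise limit then extends to the desired $\iota$. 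The inductive step combines a near-isometric local copy $\psi\colon E_{n+1}\to A$ (local embeddability) with a correction: comparing $\psi|_{E_n}$ with $\phi_n$ yields a near-complete-isometry between their ranges, and since every subspace of $A$ locally embeds in $\C$ and $A$ is WEP, Proposition \ref{ext6} extends it to a near-completely-contractive self-map $\Theta\colon A\to A$; then $\phi_{n+1}:=\Theta\psi$ is both near-isometric and nearly matches $\phi_n$ on $E_n$.

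With $\iota$ in hand, the transfer to $\C$ is the clean part. Using the LP of $A$, fix a surjective unital $*$-homomorphism $q\colon\C\to A$ together with a u.c.p.\ lifting $r\colon A\to\C$, so that $qr=Id_A$. Put $Y=r(A)\subset\C$ and $P=rq$, a u.c.p.\ projection of $\C$ onto $Y$. Exactly as in the proof of (i)$\Rightarrow$(ii) of Theorem \ref{t1}, the WEP of $A$ forces $Y\otimes_{\max}\C\simeq A\otimes_{\max}\C$ and hence the min and max norms to coincide on $Y\otimes\C$. I then set $f:=r\iota\colon X\to\C$. Since $qf=qr\iota=\iota$ is completely isometric while $f$ is completely contractive, one has $\|x\|=\|qf(x)\|\le\|f(x)\|\le\|x\|$ at every matrix level, so $f$ is completely isometric. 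Finally $f(X)=r(\iota(X))\subset Y$, and the coincidence of min and max on $Y\otimes\C$ restricts to $f(X)\otimes\C$, which is precisely the assertion.

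The main obstacle is the inductive perturbation producing $\iota$: local embeddability only yields \emph{approximate} copies, and the delicate point is to control the errors so that the limit is an exact complete isometry rather than merely a near-isometry. This is exactly where the homogeneity supplied by the WEP extension property of Proposition \ref{ext6} (available because $A$ is WEP and every subspace of $A$ locally embeds in $\C$) is essential; everything else is the routine reduction described above.
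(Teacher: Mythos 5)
Your second step is sound: given an honest completely isometric $\iota\colon X\to A$, composing with the u.c.p.\ lifting $r$ of $q\colon \C\to A$ and using $Y=r(A)$, $P=rq$ reproduces exactly the argument of (i)$\Rightarrow$(ii) in Theorem \ref{t1}, and $f=r\iota$ then works. The genuine gap is in your first, self-declared ``crux'' step, and it is not a technicality. Proposition \ref{ext6} controls the extension $\Theta\colon A\to A$ of $\phi_n(\psi|_{E_n})^{-1}$ only from \emph{above}: you get $\|\Theta\|_{cb}\le (1+\vp)\|\phi_n(\psi|_{E_n})^{-1}\|_{cb}$, but no lower bound for $\Theta$ anywhere off the subspace $\psi(E_n)$ on which it is prescribed. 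Consequently $\phi_{n+1}=\Theta\psi$ does agree with $\phi_n$ on $E_n$ and is nearly completely contractive, but $\Theta$ may collapse the new directions of $\psi(E_{n+1})$: nothing bounds $\|\phi_{n+1}^{-1}\|_{cb}$, and $\phi_{n+1}$ need not even be injective, so the induction does not close and the limit map need not be injective, let alone completely isometric. This is the classical obstruction to upgrading local embeddability to embeddability (already for Banach spaces: $L_1$ locally embeds in $\ell_1$ but does not embed in it); an extension theorem with only upper cb-estimates cannot overcome it --- one would need some approximate homogeneity of $A$ (extensions preserving near complete isometries), which nothing in the paper provides. A further warning sign: your first step never uses the LP of $A$, only WEP, LLP and local embeddings, so if it were correct it would apply unconditionally to the algebras of \cite{155} and prove outright that every separable o.s.\ locally embedding in $\C$ embeds completely isometrically into such an $A$; the paper is careful to obtain statements of this strength only conditionally.

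The paper's proof never constructs an embedding into $A$, precisely to avoid this. It fixes a c.i.\ embedding $j_X\colon X\to B(H)$ and a surjection $q\colon C^*(\F)\to B(H)$, takes the nets $v_i\colon X\to A$, $w_i\colon A\to B(H)$ of Proposition \ref{p16} (so $w_iv_i\to j_X$ pointwise), and invokes the LP of $A$ --- this is where LP enters --- via Proposition \ref{p17} to lift each $w_i$ to $\hat w_i\colon A\to C^*(\F)$ with $\|\hat w_i\|_{mb}\le 1$, hence $\|\hat w_i\|_{mM}\le 1$ by \eqref{mM11}. The maps $f_i=\hat w_i v_i$ satisfy $qf_i=w_iv_i\to j_X$ and $\|f_i\|_{mM}\le 1$, and Arveson's principle, applied to the admissible class $\{u\colon \|u\|_{mM}\le 1\}$, yields a single map $f$ with $qf=j_X$ and $\|f\|_{mM}\le 1$. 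The idea missing from your argument is that the lower bound on $f$ is then automatic: a complete contraction whose composite with the complete contraction $q$ equals the complete isometry $j_X$ is itself completely isometric, so the injectivity comes from the exact lifting identity rather than from lower bounds on the approximants; moreover the estimate $\|f\|_{mM}\le 1$ (rather than your projection argument) is what gives $\min=\max$ on $f(X)\otimes\C$, after replacing $C^*(\F)$ by a copy of $\C$ via Lemma \ref{nsf}.
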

  
    \begin{proof} Let $j_X: X \to B(H)$ be a c.i. embedding.
    Let $q: C^*(\F) \to B(H)$ be a surjective $*$-homomorphism.
     Our goal is to show that $j_X: X \to B(H)$ admits a lifting $f: X \to C^*(\F)$ such that $\|f\|_{mM}=1$.
    Since $j_X$ is completely isometric, so will be $f$. By  Arveson's principle
    it suffices to show the following claim:  there is a net of maps $f_i: X \to C^*(\F)$
    with $\lim_i \|f_i\|_{mM}\le 1$ such that $qf_i$  tends pointwise to $j_X$.
         Let $v_i,w_i$ be as in Proposition \ref{p16}.
    Note $\|w_i\|_{mb}=\|w_i\|_{cb}$ by \eqref{eglo}. By the LP of $A$ (see Proposition \ref{p17} and \eqref{eglo1})
    the map $w_i$ admits a lifting $\hat w_i: A \to C^*(\F)$ with $\|\hat w_i\|_{mb}\le 1 $. Moreover,  $\|\hat w_i\|_{mM}=\|\hat w_i\|_{mb} \le 1 $ by \eqref{mM11}.
    Let $f_i=  \hat w_i v_i : X \to C^*(\F)$. Note $qf_i =w_iv_i$
       and $\| f_i  \|_{mM} \le \|\hat w_i\|_{mM} \| v_i\|_{cb}\le 1$. 
    Thus the maps $(f_i)$ form a net  
    such that $q f_i \to j_X$ pointwise and $\lim \|f_i\|_{mM} \le 1$.
This proves our claim.
    By Arveson's principle, there is a c.c. map $f: X \to C^*(\F)$  lifting $j_X$.
    Then  $f$ is a c.i. embedding and      
    as before (by Lemma \ref{nsf})
    $\F$ can be replaced by $\F_\infty$.
   \end{proof}
        \begin{cor} If  the equivalent properties in Theorem \ref{t1} hold
        then any separable o.s. that locally embeds in $\C$ actually
        embeds completely isometrically in $\C$.
        \end{cor}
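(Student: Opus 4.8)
The plan is to obtain this statement as an immediate consequence of Proposition \ref{iglo}, which already does all the work. Given a separable operator space $X$ that locally embeds in $\C$, I would simply invoke Proposition \ref{iglo}: its hypothesis is precisely that the equivalent properties of Theorem \ref{t1} hold, and its conclusion provides a completely isometric embedding $f: X \to \C$ (in fact one for which the minimal and maximal norms coincide on $f(X)\otimes \C$). Since the corollary asks only for the existence of a completely isometric embedding $X \to \C$, which is a strict weakening of that conclusion, it suffices to quote the proposition and discard the extra information about the coincidence of tensor norms.

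Consequently I do not expect any genuine obstacle at this step: the difficulty has been entirely absorbed into Proposition \ref{iglo}. For orientation, recall that the embedding there is built by fixing any c.i. embedding $j_X: X \to B(H)$, choosing a surjection $q: C^*(\F) \to B(H)$ from a large enough free group, and lifting $j_X$ through $q$ by Arveson's principle; the required net of approximate lifts $f_i = \hat w_i v_i$ with $\lim_i \|f_i\|_{mM} \le 1$ is furnished by Proposition \ref{p16} together with the LP of the auxiliary algebra $A$, and Lemma \ref{nsf} finally replaces $\F$ by $\F_\infty$ so that $f$ takes values in $\C$. Reading off only the completely isometric character of $f$ yields exactly the corollary. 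If one wanted a self-contained statement one could repeat just the portion of that argument producing the c.i. lift, but this would merely duplicate the proof of Proposition \ref{iglo}, so I would instead keep the one-line deduction.
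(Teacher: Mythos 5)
Your proposal is correct and matches the paper exactly: the corollary is stated immediately after Proposition \ref{iglo} with no separate proof, precisely because it is the weakening you describe (take the c.i. embedding $f: X \to \C$ from Proposition \ref{iglo} and forget the coincidence of the min and max norms on $f(X)\otimes \C$). The one-line deduction is all that is needed.
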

        
         The next statement shows that the equivalent assertions in Theorem \ref{t1} are actually
  equivalent to a ``global" version of (vi).
  
  \begin{cor}\label{kglo} For convenience let $\C_1$ be a copy of $\C$.
 The equivalent properties in Theorem \ref{t1} hold if and only if
  there is a c.i. embedding $f:\C \to \C_1$
  such that the min and max norms (induced by 
  the respective ones on $ \C_1\otimes \C$)
   coincide on $f(\C)\otimes \C \subset \C_1 \otimes \C$.
  \end{cor}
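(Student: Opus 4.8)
The plan is to derive both implications from results already established, placing essentially all the analytic work in Proposition \ref{iglo}. The forward direction is an immediate specialization of that proposition, and the converse is a short restriction argument landing on condition (ii) of Theorem \ref{t1}.

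For the ``only if'' direction, assume the equivalent properties of Theorem \ref{t1} hold. The algebra $\C=C^*(\F_\infty)$ is separable and embeds completely isometrically in itself, hence it locally embeds in $\C$ in the sense of Definition \ref{d10}. I would therefore apply Proposition \ref{iglo} with $X=\C$: this produces a c.i.\ embedding $f:\C\to\C$ on which the min and max norms agree on $f(\C)\otimes\C$. Renaming the target copy of $\C$ as $\C_1$ gives exactly the desired $f:\C\to\C_1$.

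For the converse, suppose $f:\C\to\C_1$ is a c.i.\ embedding for which the min and max norms induced by those on $\C_1\otimes\C$ coincide on $f(\C)\otimes\C$. I would verify condition (ii) of Theorem \ref{t1}. Fix a f.d.\ subspace $E\subset\C$ and put $F=f(E)$, a f.d.\ subspace of $\C_1$, which we identify with a subspace of $\C$. Since $f$ is completely isometric, $d_{cb}(E,F)=1<1+\vp$ for every $\vp>0$. It remains to check that min and max agree on $F\otimes\C$. The algebraic tensor product $F\otimes\C$ sits inside $f(\C)\otimes\C$; by injectivity of the minimal tensor norm its $\min$-norm equals the one induced from $\C_1\otimes_{\min}\C$, while its induced $\max$-norm is by definition the one from $\C_1\otimes_{\max}\C$. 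For $t\in F\otimes\C\subset f(\C)\otimes\C$ we then have $\|t\|_{\min}=\|t\|_{\max}$ because the two norms already coincide on the larger space $f(\C)\otimes\C$. This is precisely condition (ii), so by Theorem \ref{t1} all of its equivalent properties follow.

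The argument is short because the substantial content (Arveson's principle and the liftings) is hidden inside Proposition \ref{iglo}. The only place demanding any care is the last step of the converse: one must confirm that the max norm induced on the subspace $f(E)\otimes\C$ from $\C_1\otimes_{\max}\C$ is the correct max norm to compare against. This is not a genuine obstacle, since it follows automatically from $\|\cdot\|_{\min}\le\|\cdot\|_{\max}$ together with injectivity of the minimal norm. As an alternative to targeting (ii), one could instead land on (vi): the restricted equality $\min=\max$ on $f(E)\otimes\C$ is exactly condition (v) of Theorem \ref{t11} for $E$, whence $E$ is controllable, and since $E\subset\C$ was arbitrary this yields (vi) and hence all the equivalent properties.
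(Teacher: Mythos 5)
Your proposal is correct and takes essentially the same approach as the paper: the forward direction is exactly the paper's (apply Proposition \ref{iglo} with $X=\C$), and the converse is the same one-line restriction argument, except that you land on condition (ii) of Theorem \ref{t1} whereas the paper invokes (v) $\Rightarrow$ (i) of Theorem \ref{t11} to land on (vi) --- the very alternative you mention at the end. Both steps are sound, including your identification of the max norm on $f(E)\otimes\C$ induced from $\C_1\otimes_{\max}\C$ with the max norm in the paper's convention for operator subspaces.
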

        
        \begin{proof}
        Assume that $f(\C)\otimes_{\min } \C =f(\C)\otimes_{\max } \C$.
        By (i) $\Leftrightarrow$ (v) in Theorem \ref{t11},
        any f.d. $E\subset \C$ is controllable, which is (vi) in
        Theorem \ref{t1}. This proves the if part. Conversely, by Proposition \ref{iglo}, the assertion (vi) in
        Theorem \ref{t1} implies the existence of $f$ as in Corollary \ref{kglo}.
        \end{proof}
        
        For convenience, 
        let us say that a $C^*$-algebra $D$ is $j$-nuclear
        if $j$ is $D$-nuclear.
  \begin{thm} 
   Let $A$ be  any separable WEP, LLP
  $C^*$-algebra such that  $\C$ locally embeds in $A$.
  Let $(D_\a)_{\a\in I}$ be an arbitrary family of 
  $j$-nuclear separable $C^*$-algebras. If $A$ has the LP
then $(A, \ell_\infty(\{D_\a\}) )$ is a nuclear pair 
and     $j$ is $ \ell_\infty(\{D_\a\})$-nuclear. Equivalently
$ \ell_\infty(\{D_\a\})$ is $ j$-nuclear.
  \end{thm}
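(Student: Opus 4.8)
The plan is to prove the two conclusions (the nuclear pair and the $j$-nuclearity of the $\ell_\infty$-sum) by first isolating a clean equivalence that does \emph{not} use the LP, and then bringing in the LP only to pass from the individual $D_\a$ to their $\ell_\infty$-sum. Note that $A$ is WEP, LLP and receives $\C$ locally by hypothesis, and that, being WEP and LLP, $A$ itself also locally embeds in $\C$ by \cite[Prop.~3.7]{155}; so $A$ and $\C$ are locally cb-equivalent in both directions. The key claim I would establish is: for an \emph{arbitrary} $C^*$-algebra $D$, the algebra $D$ is $j$-nuclear if and only if $(A,D)$ is a nuclear pair. Granting this, the argument is: each $D_\a$ is $j$-nuclear, hence each $(A,D_\a)$ is a nuclear pair; the LP of $A$ upgrades this to the single nuclear pair $(A,\ell_\infty(\{D_\a\}))$ (the first conclusion); and the claim read backwards converts that nuclear pair into the $j$-nuclearity of $\ell_\infty(\{D_\a\})$ (the second conclusion).

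For the direction ``$j$-nuclear $\Rightarrow$ nuclear pair'', I would fix $t\in E\otimes D$ with $E\subset A$ finite dimensional and $\|t\|_{A\otimes_{\min}D}\le1$, and bound $\|t\|_{A\otimes_{\max}D}$. Since $A$ is WEP, Lance's characterization gives $\|t\|_{A\otimes_{\max}D}=\|t\|_{\B\otimes_{\max}D}$ for the embedding $A\subset\B$. As $A$ locally embeds in $\C$, I pick $G\subset\C$ and a cb-isomorphism $\theta\colon E\to G$ with $\|\theta\|_{cb}\le1$ and $\|\theta^{-1}\|_{cb}\le1+\vp$; functoriality of $\otimes_{\min}$ for cb maps gives $\|(\theta\otimes Id_D)t\|_{\C\otimes_{\min}D}\le1$, and $j$-nuclearity of $D$ then yields $\|(j\otimes Id_D)(\theta\otimes Id_D)t\|_{\B\otimes_{\max}D}\le1$. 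The two elements $t$ and $(j\theta\otimes Id_D)t$ of $\B\otimes D$ differ by the cb map $\theta^{-1}j^{-1}\colon j(G)\to E\subset\B$; I extend this to $\Psi\colon\B\to\B$ by injectivity of $\B$ (same cb-norm), so $\Psi\circ(j\theta)=Id_E$, and use $\|\Psi\|_{mb}=\|\Psi\|_{cb}\le1+\vp$ (WEP of $\B$, \eqref{eglo}) together with $\|Id_D\otimes\Psi\colon D\otimes_{\max}\B\to D\otimes_{\max}\B\|\le\|\Psi\|_{mb}$ (the defining supremum of the $mb$-norm after \eqref{eglo1}) to conclude $\|t\|_{\B\otimes_{\max}D}\le1+\vp$. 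Letting $\vp\to0$ gives $(A,D)$ nuclear. The converse direction is the \emph{same} computation run in the opposite order, now using that $\C$ locally embeds in $A$.

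The LP enters only in assembling the single factors. Writing each separable $D_\a$ as a quotient $\rho_\a\colon\C\to D_\a$, I would, for $t\in E\otimes\ell_\infty(\{D_\a\})$ with $E\subset A$ finite dimensional, lift each coordinate $t_\a$ through the metric surjection $\rho_\a\otimes Id_E\colon(\C\otimes E,\|\cdot\|_{\max})\to(D_\a\otimes E,\|\cdot\|_{\max})$ furnished by Lemma \ref{02}, getting $\hat t_\a\in\C\otimes E$ with $\|\hat t_\a\|_{\C\otimes_{\max}A}\le\sup_\a\|t_\a\|_{A\otimes_{\max}D_\a}+\vp$. Assembling $\hat t=(\hat t_\a)\in\ell_\infty(I,\C)\otimes E$ and invoking the LP criterion \eqref{e15} for $A$ (Theorem \ref{t15}) gives $\|t\|_{A\otimes_{\max}\ell_\infty(\{D_\a\})}\le\|\hat t\|_{\ell_\infty(I,\C)\otimes_{\max}A}\le\sup_\a\|\hat t_\a\|_{\C\otimes_{\max}A}$, i.e. $\otimes_{\max}$ distributes over the $\ell_\infty$-sum; combined with the automatic distribution for $\otimes_{\min}$ and the pairwise identities $A\otimes_{\min}D_\a=A\otimes_{\max}D_\a$ just obtained, this gives $(A,\ell_\infty(\{D_\a\}))$ nuclear. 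The $j$-nuclearity of $\ell_\infty(\{D_\a\})$ then follows from the backward direction of the equivalence applied to $D=\ell_\infty(\{D_\a\})$.

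The main obstacle, and the reason the argument must route through $\B$ rather than stay inside $\C$, is that $\otimes_{\max}$ is not functorial for merely completely bounded maps, so the cb local equivalence between $A$ and $\C$ cannot be tensored against $\|\cdot\|_{\max}$ directly. The device that resolves this is that $\B=B(\ell_2)$ is simultaneously injective (so a cb map off a finite-dimensional subspace extends without increasing its cb-norm) and WEP (so on maps into $\B$ one has $mb=cb$, and $mb$-maps do tensorize for $\otimes_{\max}$); this is exactly what promotes the cb local embeddings to $mb$-contractions acting correctly on the maximal norm. I expect the only delicate bookkeeping to be the uniform boundedness of the assembled lift $\hat t$, needed so that $\hat t\in\ell_\infty(I,\C)\otimes E$ (which holds because $E$ is finite dimensional and the bounds on $\|\hat t_\a\|_{\C\otimes_{\max}A}$ are uniform in $\a$), and the routine check that $(\rho_\a)_\a\otimes Id_E$ returns $\hat t$ to $t$.
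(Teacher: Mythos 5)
Your proof is correct, but it reaches the two substantive conclusions by a genuinely different route than the paper. The paper gets the pairwise nuclearity of the pairs $(A,D_\a)$ by re-invoking the construction of \cite{155}: for each $j$-nuclear separable $D_\a$ it produces an auxiliary separable WEP, LLP algebra $A_\a$ in which $\C$ locally embeds and such that $(A_\a,D_\a)$ is nuclear, and then transfers nuclearity from $A_\a$ to $A$ by Corollary \ref{c16} (i.e.\ by the local mb-factorization of Proposition \ref{pp1}); and it gets the final $j$-nuclearity of $\ell_\infty(\{D_\a\})$ from Proposition \ref{p16} (a net of complete contractions $v_i:\C\to A$, $w_i:A\to \B$ with $w_iv_i\to j$ pointwise, which rests on Kirchberg's extension theorem, Proposition \ref{ext6}) followed by a pointwise limit. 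You replace both of these steps by a single self-contained equivalence, proved one f.d.\ subspace at a time: $D$ is $j$-nuclear if and only if $(A,D)$ is a nuclear pair, for arbitrary $D$. Your proof of that equivalence --- Lance's isometry $A\otimes_{\max}D\subset \B\otimes_{\max}D$, the local embeddings in both directions between $A$ and $\C$, Wittstock injectivity of $\B$, and the identity $\|\cdot\|_{mb}=\|\cdot\|_{cb}$ for maps into the WEP algebra $\B$ from \eqref{eglo}, which is what lets the extended map act on $\otimes_{\max}$ --- is sound, including the tersely stated converse direction, and it is exactly the right remedy for the non-functoriality of $\otimes_{\max}$ under merely cb maps. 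The middle step is the same in both arguments: the paper compresses it into ``by Theorem \ref{t15}'', and your coordinatewise lifting via Lemma \ref{02}, assembly of the lifts into $\ell_\infty(I,\C)\otimes E$, and application of \eqref{e15} is precisely the detail being compressed there. What the paper's route buys is brevity given its machinery, together with the structural point that nuclear partners are common to all algebras of the \cite{155} class; what your route buys is independence from re-running the \cite{155} construction for each $D_\a$ and from Proposition \ref{ext6}, plus a cleaner intermediate statement that isolates exactly where the hypotheses on $A$ enter: only through your equivalence and through \eqref{e15}.
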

         \begin{proof}  Fix $\a\in I$. Assume   that $D_\a$ is separable
         and $j$-nuclear. By the construction in \cite{155} there is a
          WEP and LLP separable $C^*$-algebra $A_\a$ such that  $\C$  locally embeds
          in $A_\a$ and is such that the pair $(A_\a,D_\a)$ is nuclear.
          By Corollary \ref{c16} the pairs $(A,D_\a)$ are nuclear
         for any $\a \in I$.  
         By Theorem \ref{t15} the pair
         $(A, \ell_\infty(\{D_\a\}) )$ is nuclear.
         We now apply Proposition \ref{p16} to $X=\C$ with $j$ in place of $j_X$.
         Recalling that $\|w_i\|_{cb}=\|w_i\|_{mb}$ (see \eqref{eglo}) we observe
         that the map $w_iv_i: \C \to \C$ is $\ell_\infty(\{D_a\})$ nuclear
         (since $Id_A$ is so). Taking the limit it follows that $j$ itself
         is $\ell_\infty(\{D_\a\})$ nuclear.            \end{proof}
  
  \begin{rem} 
  Thus if there is an $A$ with WEP and  LP 
    such that  $\C$ locally embeds in $A$,
  the class of $j$-nuclear $C^*$-algebras
  is stable by arbitrary $\ell_\infty$-sums.
      \end{rem}
      
   \section{A ``local reflexivity" theorem}

In Banach space theory, the local reflexivity (LR) principle from \cite{LR} plays 
an important role. For $C^*$-algebras, following seminal work
by Archbold and Batty, Effros and Haagerup studied in \cite{EH}
the cb-version of that principle. In \cite{157} we proved an
mb-version of the same principle. What follows is a more general  version of   the one in \cite{157}, suggested by our use of the norm $mM$ in the preceding section.

Let $E\subset A$ be a finite dimensional subspace of a  $C^*$-algebra
  $ A$.

 \begin{thm}\label{t3}  
Let $E\subset A$ be any f.d. subspace of a $C^*$-algebra $A$.
Then  for any separable $C^*$-algebra $C$ 
 we have a contractive inclusion
\begin{equation}\label{mblr}
{mM}(E,C^{**})\to  {mM}(E,C)^{**}.\end{equation}
In other words any $u$ in the unit ball of ${mM}(E,C^{**})$ is the weak* limit
of a net $(u_i)$ in the unit ball of ${mM}(E,C)$.
\end{thm}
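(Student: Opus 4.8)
The plan is to prove the equivalent norm inequality. Identifying $mM(E,C)^{**}$ with $L(E,C^{**})$ as a vector space, I will show that $\|u\|_{mM(E,C)^{**}}\le \|u\|_{mM(E,C^{**})}$ for every $u\colon E\to C^{**}$. By the bipolar theorem together with Goldstine's theorem this is exactly the asserted weak* density of the image of $B_{mM(E,C)}$ in the bidual around any element of $B_{mM(E,C^{**})}$, so it suffices to establish the inequality. The vector-space identification $mM(E,C)^{**}=L(E,C^{**})$ is legitimate precisely because $E$ is finite dimensional: writing $mM(E,C)=E^*\otimes C$ algebraically and using that $E^*$ is finite dimensional, one has $(E^*\otimes C)^{**}=E^*\otimes C^{**}=L(E,C^{**})$, with the duality between $mM(E,C)^*$ and $mM(E,C)^{**}$ given by the natural pairing of $C^{**}$ with $C^*$ in the second variable, which is automatically weak* continuous on $C^{**}$.

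First I would read off a convenient norming family for $mM(E,C)^*$ directly from the definition of the $mM$-norm. Writing out $\|v\|_{mM}=\|Id\otimes v\colon \ell_\infty(\C)\otimes_{\min}E\to \ell_\infty(\C)\otimes_{\max}C\|$ as a supremum, each pair $(\xi,\omega)$ with $\xi\in B_{\ell_\infty(\C)\otimes_{\min}E}$ and $\omega\in B_{(\ell_\infty(\C)\otimes_{\max}C)^*}$ yields a functional $\phi_{\xi,\omega}\in B_{mM(E,C)^*}$ defined by $\phi_{\xi,\omega}(v)=\langle (Id\otimes v)(\xi),\omega\rangle$, which is linear in $v$. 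Since $\|v\|_{mM}=\sup_{\xi,\omega}|\phi_{\xi,\omega}(v)|$, the family $\{\phi_{\xi,\omega}\}$ is norming, so by the bipolar theorem its weak* closed absolutely convex hull is all of $B_{mM(E,C)^*}$. Because $u\in mM(E,C)^{**}$ is weak* continuous, it therefore suffices to bound $|\langle u,\phi_{\xi,\omega}\rangle|$ by $\|u\|_{mM(E,C^{**})}$ for each such pair.

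The heart of the argument is to pass $\omega$ to the bidual. Using the GNS representation of (a state-decomposition of) $\omega$, write $\omega(\cdot)=\langle \Pi(\cdot)\zeta,\zeta'\rangle$ where $\Pi$ comes from a pair of commuting representations $\pi\colon \ell_\infty(\C)\to B(K)$ and $\rho\colon C\to B(K)$. The representation $\rho$ extends uniquely to a \emph{normal} representation $\tilde\rho\colon C^{**}\to B(K)$; since $\tilde\rho(C^{**})$ lies in the weak* closure of $\rho(C)$, it still commutes with $\pi(\ell_\infty(\C))$, so $\pi$ and $\tilde\rho$ induce a representation of $\ell_\infty(\C)\otimes_{\max}C^{**}$ and hence a functional $\tilde\omega$ on it with $\|\tilde\omega\|=\|\omega\|$ that restricts to $\omega$ on $C$ and is normal in the $C^{**}$-variable. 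I would then check that the canonical bidual extension of $\phi_{\xi,\omega}$ to $mM(E,C)^{**}=L(E,C^{**})$ coincides with the functional $u\mapsto \langle (Id\otimes u)(\xi),\tilde\omega\rangle$: both are weak* continuous on $L(E,C^{**})$ — the second because $\tilde\rho$ is normal, so that $u\mapsto \tilde\omega(a\otimes u(e))$ is given by an element of $C^*$ — and both agree on $L(E,C)$, which is weak* dense by Goldstine. With this identification, $|\langle u,\phi_{\xi,\omega}\rangle|=|\langle (Id\otimes u)(\xi),\tilde\omega\rangle|\le \|\omega\|\,\|(Id\otimes u)(\xi)\|_{\ell_\infty(\C)\otimes_{\max}C^{**}}\le \|u\|_{mM(E,C^{**})}$, which is exactly the required bound.

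The main obstacle is precisely this extension step: one must produce a norm-preserving extension of an arbitrary functional on $\ell_\infty(\C)\otimes_{\max}C$ to $\ell_\infty(\C)\otimes_{\max}C^{**}$ that is weak* continuous (normal) in the $C^{**}$-variable, and then confirm that this is the extension dictated by the canonical duality between $mM(E,C)^*$ and $mM(E,C)^{**}$ rather than some other one. The normality of $\tilde\rho$ and the finite-dimensionality of $E$ (which makes $\ell_\infty(\C)\otimes_{\min}E$ a genuine finite sum, so that $(Id\otimes u)(\xi)$ is a finite sum $\sum_m a_m\otimes u(e_m)$, and guarantees the vector-space identification of biduals) are what make the matching go through; everything else is bipolar and Goldstine bookkeeping. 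I expect no essential use of the separability of $C$ in this argument, though it is available and harmless.
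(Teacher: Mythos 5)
Your overall skeleton (functionals $\phi_{\xi,\omega}$ read off from the definition of the $mM$-norm, the matching of the canonical pairing with $u\mapsto\langle (Id\otimes u)(\xi),\tilde\omega\rangle$ via weak* continuity plus Goldstine, and the final estimate through the max norm on $D\otimes C^{**}$) is essentially the paper's, and those parts are sound; in particular your normal extension $\tilde\rho$, $\tilde\omega$ is a perfectly good substitute for the paper's use of the contractive inclusion $D\otimes_{\max}C^{**}\to (D\otimes_{\max}C)^{**}$. But there is a genuine gap at the step you dispose of in one sentence: ``Because $u\in mM(E,C)^{**}$ is weak* continuous, it therefore suffices to bound $|\langle u,\phi_{\xi,\omega}\rangle|$.'' An element of a bidual $X^{**}$ is \emph{not} $\sigma(X^*,X)$-continuous on $X^*$ unless it lies in $X$; it is only $\sigma(X^*,X^{**})$-continuous. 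Consequently, knowing that the $\sigma(X^*,X)$-closed absolutely convex hull of your norming family is all of $B_{X^*}$ does \emph{not} allow you to pass from $|\langle u,\phi_{\xi,\omega}\rangle|\le 1$ on the family to $\|u\|_{X^{**}}\le 1$: the polar in $X^{**}$ of a merely norming subset of $B_{X^*}$ can be strictly larger than $B_{X^{**}}$. What one needs is that the norm-closed (equivalently $\sigma(X^*,X^{**})$-closed) convex hull of the family is the dual ball, and norming alone does not give this.

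This missing step is exactly what the paper calls its ``main point'': it proves that the set $K$ of all functionals of the form $u\mapsto \langle w,[Id_D\otimes u](t)\rangle$ is \emph{convex} and \emph{weak*-closed}, so that by the bipolar theorem $K$ itself \emph{is} the unit ball of ${mM}(E,C)^*$, after which bounding $u''$ on $K$ trivially bounds its bidual norm. Both properties force you to let the tensor factor vary: convexity is proved via direct sums $D_1\oplus D_2$, and weak*-closedness by taking a net $\phi_{\xi_i,\omega_i}\to f$, forming the single algebra $D=\ell_\infty(\{D_i\})$, assembling the $\xi_i$ into one $\xi\in B_{D\otimes_{\min}E}$, transferring each $\omega_i$ to a functional on $D\otimes_{\max}C$ through the coordinate projection, and taking a weak* cluster point. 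This is why the theorem is really proved for a class $\cl D$ stable under \emph{arbitrary} $\ell_\infty$-sums, with Proposition \ref{pp2} (i.e.\ \eqref{ee1}) guaranteeing that replacing your fixed $\ell_\infty(\C)$ by $\ell_\infty(I;\C)$, $I$ arbitrary, does not change the norm; your argument, which keeps $D=\ell_\infty(\C)$ fixed throughout, cannot even accommodate the net construction. So the repair is available and compatible with everything else you wrote, but as it stands the proof is incomplete at its pivotal point.
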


More generally, consider a class of   $C^*$-algebras $\cl D$
that is stable by arbitrary $\ell_\infty$-sum, i.e. for any family
$(D_i)_{i\in I}$ in $\cl D$ we have $\ell_\infty(\{D_i\}) \in \cl D$.

Then we define
$$\|u\|_{mM^\cl D}= \sup_{D\in \cl D}\|id_D \otimes u: D \otimes_{\min} E \to D \otimes_{\max} C\|,$$
and we denote by ${mM_\cl D}(E,C)$ the space of all $u:E \to C$
equipped with this norm.

When $\cl D=\{\ell_\infty(I,\C)\mid I \ arbitrary\}$ and $C$ has LP then
$$\|u\|_{mM^\cl D}= \|u\|_{mM}.$$

\begin{thm}\label{t3'}  
Assume $\cl D$ stable by arbitrary $\ell_\infty$-sums.
Let $E\subset A$ be any f.d. subspace of a $C^*$-algebra $A$.
Then for any  $C^*$-algebra $C$
we have a contractive inclusion
\begin{equation}\label{mblr'}
{mM^\cl D}(E,C^{**})\to  {mM^\cl D}(E,C)^{**}.\end{equation}
 
\end{thm}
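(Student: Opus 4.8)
The plan is to deduce \eqref{mblr'} from a duality computation, after first reducing to a single test algebra and then isolating the one genuinely non-formal step. Since $\cl D$ is stable under arbitrary $\ell_\infty$-sums, I would begin by realizing the norm $\|\cdot\|_{mM^{\cl D}}$ through a single $D_\ast\in\cl D$: because $E$ is finite dimensional, every element of $D\otimes E$ involves only countably many coordinates of $D$, so by \eqref{ee1} (Proposition \ref{pp2}) the relevant min- and max-norms are governed by separable subalgebras, and taking $D_\ast=\ell_\infty(\{D_i\})$ over a set of representatives of the separable members of $\cl D$ gives $\|v\|_{mM^{\cl D}(E,C)}=\|id_{D_\ast}\otimes v:D_\ast\otimes_{\min}E\to D_\ast\otimes_{\max}C\|$, and similarly with $C^{\ast\ast}$ in place of $C$. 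Still using that $E$ is finite dimensional, the underlying vector space of $mM^{\cl D}(E,C)^{\ast\ast}$ is $E^\ast\otimes C^{\ast\ast}$, its weak* topology is the coordinatewise $\sigma(C^{\ast\ast},C^\ast)$, and the map \eqref{mblr'} is the identity on this space; thus the entire content is the inequality $\|u\|_{mM^{\cl D}(E,C)^{\ast\ast}}\le\|u\|_{mM^{\cl D}(E,C^{\ast\ast})}$ for $u:E\to C^{\ast\ast}$.

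The engine of the argument is the normal extension of representations. Any norm-$\le 1$ functional $\omega$ on $D_\ast\otimes_{\max}C$ has the form $\omega(\cdot)=\langle\pi_{D_\ast}(\cdot)\pi_C(\cdot)\zeta,\eta\rangle$ for commuting representations with $\|\zeta\|\,\|\eta\|\le1$, and $\pi_C$ extends to a \emph{normal} representation of $C^{\ast\ast}$ whose range still commutes with $\pi_{D_\ast}$. This simultaneously shows that $D_\ast\otimes_{\max}C\hookrightarrow D_\ast\otimes_{\max}C^{\ast\ast}$ isometrically (so that $v\mapsto i_Cv$ is isometric from $mM^{\cl D}(E,C)$ into $mM^{\cl D}(E,C^{\ast\ast})$) and provides an extension $\tilde\omega$ of $\omega$ to $D_\ast\otimes_{\max}C^{\ast\ast}$ of norm $\le1$ that is normal in the $C^{\ast\ast}$-variable. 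Normality is precisely what is needed: it forces the coordinate functionals of $\tilde\omega$ to lie in $C^\ast$, so that pairing $\tilde\omega$ with $(id_{D_\ast}\otimes u)(t)$ reproduces exactly the canonical $\sigma(C^{\ast\ast},C^\ast)$-pairing used by the bidual. Consequently, for each elementary functional $\xi_{t,\omega}(v)=\langle\omega,(id_{D_\ast}\otimes v)(t)\rangle$ in the unit ball of $mM^{\cl D}(E,C)^\ast$ one gets $|\langle u,\xi_{t,\omega}\rangle|=|\langle\tilde\omega,(id_{D_\ast}\otimes u)(t)\rangle|\le\|t\|_{\min}\,\|u\|_{mM^{\cl D}(E,C^{\ast\ast})}$, and this bound survives absolutely convex combinations of elementary functionals.

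The hard part, and the step I expect to be the main obstacle, is that these elementary functionals norm $mM^{\cl D}(E,C)^\ast$ only after a weak* closure, whereas $\langle u,\cdot\rangle$ is not weak* continuous on that dual — precisely the failure of $\sigma(C^\ast,C)$-convergence of coordinates to preserve pairings against $C^{\ast\ast}$. This is the genuine local reflexivity content, and I would clear it exactly as in the proof of Theorem \ref{t3}: by Goldstine it suffices, for a finite set $G\subset C^\ast$ and $\varepsilon>0$, to produce $v:E\to C$ with $\langle g,v(e)\rangle=\langle u(e),g\rangle$ for $e\in E$, $g\in G$, and $\|v\|_{mM^{\cl D}(E,C)}\le1+\varepsilon$, and then to assemble the approximating net by Arveson's principle. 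To obtain the simultaneous norm control I would pass to the von Neumann algebra $\cl M=(D_\ast\otimes_{\max}C)^{\ast\ast}$, use the normal $\ast$-homomorphism $D_\ast\otimes_{\max}C^{\ast\ast}\to\cl M$ furnished above to view $id_{D_\ast}\otimes u$ as a contraction into $\cl M$, and apply Kaplansky density — the norm-controlled strengthening of Goldstine — to approximate it, in the point-$\sigma(\cl M,\cl M_\ast)$ topology, by maps $id_{D_\ast}\otimes v$ with $v:E\to C$ and $\|id_{D_\ast}\otimes v\|\le1+\varepsilon$; normality of the $\ast$-homomorphism is what makes this weak* approximation restrict to the required coordinate convergence against $G$.

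Granting that step, the inequality $\|u\|_{mM^{\cl D}(E,C)^{\ast\ast}}\le\|u\|_{mM^{\cl D}(E,C^{\ast\ast})}$ follows, which is \eqref{mblr'}. I would close by observing that $\ell_\infty$-stability entered only to realize the norm through the single algebra $D_\ast$, the remainder of the argument being insensitive to the particular class $\cl D$; in particular, taking $\cl D=\{\ell_\infty(I,\C)\}$ with $C$ of LP, so that $mM^{\cl D}=mM$, recovers Theorem \ref{t3}.
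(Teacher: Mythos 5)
Your setup is sound as far as it goes: the identification of $mM^{\cl D}(E,C)^{**}$ with $E^*\otimes C^{**}$ as a vector space, the pairing identity for elementary functionals obtained by extending a state-like functional $\omega$ on $D_*\otimes_{\max}C$ to $D_*\otimes_{\max}C^{**}$ via the normal extension of the $C$-representation, and the resulting bound $|\langle u,\xi_{t,\omega}\rangle|\le\|u\|_{mM^{\cl D}(E,C^{**})}$ all parallel the paper's claim \eqref{cb1} (the paper gets the same identity from the contractive map $D\otimes_{\max}C^{**}\to(D\otimes_{\max}C)^{**}$ given by maximality of the max norm). But the step you yourself single out as the hard one is where your proof breaks. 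Having bounded $\langle u,\cdot\rangle$ on the elementary functionals and their absolutely convex combinations, you must bound it on the weak* closed absolutely convex hull of these functionals, which is the whole dual unit ball, and $\langle u,\cdot\rangle$ is not weak* continuous. Your proposed fix --- Kaplansky density in $\cl M=(D_*\otimes_{\max}C)^{**}$ --- does not accomplish this: Kaplansky density approximates a given element of $\cl M$ in the $\sigma$-strong topology, with norm control, by elements of the weak*-dense $C^*$-subalgebra $D_*\otimes_{\max}C$, but it gives no way to force the approximants to have the very special form $(Id_{D_*}\otimes v)(t)$ for a \emph{single} linear map $v:E\to C$, uniformly over all $t\in B_{D_*\otimes_{\min}E}$. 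Producing such a $v$ with $\|v\|_{mM^{\cl D}}\le 1+\vp$ matching $u$ against finitely many functionals is precisely the statement being proved, so this step is circular rather than a proof.

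The paper closes exactly this gap by a different mechanism: it shows that the set $K$ of elementary functionals is convex and \emph{already weak* closed}, so by the bipolar theorem $K$ equals the entire dual unit ball and the elementary bound suffices with no density argument at all. The weak* closedness is where the hypothesis that $\cl D$ is stable under arbitrary $\ell_\infty$-sums does its real work: given a net $f_i\in K$ with data $(D_i,w_i,t_i)$, one forms $D=\ell_\infty(\{D_i\})\in\cl D$, the single tensor $t\in B_{D\otimes_{\min}E}$ with coordinates $t_i$, the pulled-back functionals $v_i=w_i\circ(p_i\otimes Id_C)$, and a weak* limit point $w$ of $(v_i)$, exhibiting the limit functional as elementary again. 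By contrast, in your outline the $\ell_\infty$-stability is consumed only by the preliminary reduction to one algebra $D_*$ --- and that reduction as written is also incorrect, since $\cl D$ need have no separable members at all (e.g.\ $\cl D=\{\ell_\infty(I,B(\ell_2))\mid I\ \text{arbitrary}\}$), and \eqref{ee1}/Proposition \ref{pp2} concern $\ell_\infty(I;D)$ with $D$ separable, not general members of $\cl D$. That reduction can be repaired (take the $\ell_\infty$-sum of witnessing algebras over all maps $E\to C$ and $E\to C^{**}$), but the misplacement of the $\ell_\infty$-hypothesis is a symptom of the missing idea: it is needed to make the set of elementary functionals weak* closed, not merely for bookkeeping.
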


   \begin{proof}
  This will follow from the bipolar theorem.
  We first need to identify the dual of
   ${mM^\cl D}(E,C)$. As a vector space 
   ${mM^\cl D}(E,C) \simeq C  \otimes E^*$ and
   hence ${mM^\cl D}(E,C)^* \simeq C^* \otimes E$
(or say $({C^*})^{\dim(E)}$).
  We equip $C^* \otimes E$ with the norm $\a$ defined as follows.
  We call ``admissible" any $C^*$-algebra $D$ in $\cl D$.
  Let $K\subset {mM^\cl D}(E,C)^*$
   denote the set of those $f \in {mM^\cl D}(E,C)^*$ for which 
  there is an admissible $D$,    a functional  $w$ in the unit ball of
   $ {(D\otimes_{\max } C)^* } $
  and  
  $t \in B_{ D\otimes_{\min } E}$, 
  so that
  $$\forall u\in {mM^\cl D}(E,C)\quad f(u)= \langle w, [Id_D \otimes u](t)\rangle .$$
    By the very definition of $\|u\|_{{mM^\cl D}(E,C)}$ we have
   \begin{equation}\label{e3}\|u\|_{mM^ \cl D}= \sup\{ |f(u) | \mid f\in K\}.\end{equation}
  This implies that the   unit ball of the dual of ${mM^\cl D}(E,C)$ is the bipolar of $K$.
We will show that  actually $K$  is  the unit ball of
${mM^\cl D}(E,C)^*$.\\
First we show that $K$ is convex.
Let $D_1,D_2$ be admissible $C^*$-algebras. Let $D=D_1\oplus D_2$
with the usual $C^*$-norm. Note that   $D$ is admissible.
Using the easily checked identities
 (here the direct sum is in the $\ell_\infty$-sense) $$D \otimes_{\min} E= (D_1 \otimes_{\min} E) \oplus (D_2 \otimes_{\min} E), \text{  and  } {D\otimes_{\max } C  }=(D_1\otimes_{\max } C ) \oplus
(D_2\otimes_{\max } C) ,$$
and hence $ {(D\otimes_{\max } C)^* }=(D_1\otimes_{\max } C)^* \oplus_1
(D_2\otimes_{\max } C)^*$
(direct sum in the $\ell_1$-sense), it is easy to check that 
$K$ is convex and hence that $K$
is the unit ball of some norm $\a$ on   ${mM^\cl D}(E,C)^*$.\\
  Our main point  is the  claim that $K$ is weak* closed.
  To prove this, let $(f_i)$ be a net in $K$ converging
  weak* to some $f \in {mM^\cl D}(E,C)^*$.
  Let $D_i$ be admissible $C^*$-algebras, $w_i \in B_{(D_i\otimes_{\max} C)^*}$
  and $t_i \in B_{D_i \otimes_{\min} E}$ such that    we have 
  $$\forall u\in {mM^\cl D}(E,C)\quad f_i(u)= \langle w_i, [Id_{D_i} \otimes u](t_i)\rangle .$$
    Let $D= \ell_\infty(\{D_i\})$ and 
  let $t\in D\otimes E$ be associated to $(t_i)$.
  Clearly  $\|t\|_{\min}\le 1$.
  Let $p_i: D \to D_i$ denote the canonical coordinate projection,
  and let $ v_i\in (D\otimes_{\max} C)^*$ be the functional
  defined by $v_i(x)= w_i(  [p_i\otimes Id_C](x) )$.
  Clearly $v_i \in B_{(D\otimes_{\max} C)^*}$  and
 $  f_i(u)= \langle v_i, [Id_D \otimes u](t)\rangle .$
 Let $w$ be a weak* limit point of $(v_i)$ (or the limit point if we pass to an
 ultrafilter refining our net).
  By   weak* compactness,  $w\in B_{(D\otimes_{\max} C)^*}$.
  Then
  $f(u)=\lim f_i(u)= \langle w, [Id_D \otimes u](t)\rangle .$
  Thus we conclude $f\in K$, which proves our claim.
  
  By  \eqref{e3} the   unit ball of the dual of ${mM^\cl D}(E,C)$ is the bipolar of $K$,
  which is equal to its weak* closure. By what precedes,
  the latter coincides with $K$. Thus the gauge of $K$ is the 
  announced dual norm $\a=\|\  \|^*_{{mM^\cl D}}$.
  
  Let $u''\in {{mM^\cl D}(E,C^{**})}$ with $\|u''\|_{mM^\cl D}\le 1$. By the bipolar theorem,
  to complete the proof  it suffices to show that
  $u''$ belongs to the polar of $K$, or equivalently that
  $ |f(u'')|\le 1  $ for any $f\in K$.
  To show this consider  $f\in K$ taking 
  $  u\in mM^\cl D (E,C)$ to $ f(u)= \langle w, [Id_D \otimes u](t)\rangle $
  with $D$ admissible, $w \in B_{(D\otimes_{\max} C)^*}$
  and $t \in B_{D \otimes_{\min} E}$.
  \\
  Observe that  
  $[Id_D \otimes u''] (t) \in D \otimes C^{**} \subset (D \otimes_{\max} C)^{**}$.
  Recall that $mM^\cl D(E,C^{**})\simeq mM^\cl D(E,C)^{**} \simeq (C^{**})^{\dim(E)}$
  as vector spaces. Thus we may view  $f\in mM^\cl D(E,C)^{*}$ 
  as a weak* continuous functional on $mM^\cl D(E,C^{**})$ to define 
$f(u'')$.
   We claim that 
  \begin{equation}\label{cb1}  f(u'')= \langle w, [Id_D \otimes u''](t)\rangle ,\end{equation}
  where the duality is relative  to
  the pair $\langle (D \otimes_{\max} C)^{*},(D \otimes_{\max} C)^{**} \rangle$.
  From this claim the conclusion is immediate. Indeed,
  we have 
 $\|[Id_D \otimes u''](t) \|_{D \otimes_{\max} C^{**}}\le \|u''\|_{mM^\cl D}\le 1$, and
by the maximality of the max-norm on $D  \otimes  C^{**} $
we have a fortiori 
$$\|[Id_D \otimes u''](t) \|_{(D \otimes_{\max} C)^{**} }\le \|[Id_D \otimes u''](t) \|_{D \otimes_{\max} C^{**}} \le 1.$$
Therefore  
$|f(u'')|=|\langle w, [Id_D \otimes u''](t)\rangle|  \le \|w\|_{(D \otimes_{\max} C)^{*}} \le 1$, which completes the proof modulo our claim \eqref{cb1}.\\
To verify the claim, note that 
the identity \eqref{cb1}  holds for any $u''\in mM^\cl D(E,C)$.
Thus it suffices to prove that the right-hand side of \eqref{cb1} 
is a  weak* continuous function of $u''$ (which is obvious for the left-hand side). 
To check this
one way is to note that $t\in D \otimes E$ can be written
as a finite sum $t=\sum d_k \otimes e_k$ ($d_k\in D,e_k\in E$) and 
if we denote by $\dot w: D \to C^*$ the linear map associated to $w$
we have
$$\langle w, [Id_D \otimes u''](t)\rangle
=\sum\nl_k \langle w, [d_k \otimes u''(e_k)]\rangle
=\sum\nl_k  \langle  \dot w(d_k) , u''(e_k) \rangle, $$
and since  $\dot w(d_k) \in C^*$ the weak* continuity as a function of $u''$ is obvious,
completing the proof.
  \end{proof}

\begin{rem}\label{fg} Let $E\subset A$ be f.d.
and
 let $\cl D$ be   as before stable by arbitrary $\ell_\infty$-sums. For the sake of the comparison we wish to make below, we define for any $u: E \to C$, 
\def\g{\gamma}
$$\g_{1}(u)= \sup_{D\in \cl D}\|Id_{D} \otimes u: {D} \otimes_{\min} E
\to {D} \otimes_{\max} C\|.$$
For instance, if
 $\cl D$ is formed of WEP $C^*$-algebras and if $ \{M_n\mid n\ge 1\}$ locally embeds in $\cl D$, then
 $$\g_{1}(u) =\|Id_\B \otimes u: \B \otimes_{\min} E \to \B\otimes_{\max} C\| .$$ In that case $\g_{1}$ is related to the LLP.
We   also wish to consider
$$\g_{2}(u)= \sup_{D\in \cl D}\|Id_{D} \otimes u: {D} \otimes_{\max} E
\to {D} \otimes_{\max} C\|.$$

The preceding proof (resp. the similar one in \cite{157})   shows that, without any additional assumption on $A,D,C$, for $\gamma=\gamma_1$ (resp. for $\gamma=\gamma_2$)  
 we have a contractive inclusion
  \begin{equation}\label{gty}{\Gamma}(E,C^{**})\to {\Gamma}(E,C)^{**},\end{equation}
where $ {\Gamma}(E,C)$ denotes the   space of 
all maps $u: E \to C$ equipped with the norm $\gamma$.

Assume $A$ has the LP or satisfies \eqref{e15}. Then when either $\cl D=\{\C\}$ or
 $\cl D=\{\ell_\infty(I,\C)\mid I \ arbitrary\}$ 
(or   when $\cl D$ is the class of all $C^*$-algebras) we have
$\gamma_2(u)=\|u\|_{mb}$. In that case \eqref{gty} is nothing but 
  the LR principle from \cite[Th. 5.2]{157}, which is equivalent to the LP of $A$.

 If $\cl D$ is the collection formed of all $\ell_\infty(I)$'s (i.e. we replace $\C$ by $\CC$ !)
then $\g_{1}(u)=\g_{2}(u)=\|u\|$ and we recover the classical  LR principle for Banach spaces (see \cite{LR}).

If $C$ (resp. $A$) is nuclear and if $\{M_n\}$ locally embeds in $\cl D$, then 
$\g_{1}(u)  =\|u\|_{cb}$  (resp. $\g_{2}(u) =\g_{1}(u)  $ for all $\cl D$)
 which is consistent with the fact that
nuclearity implies   LR (resp. LP).
\end{rem}

\begin{cor}\label{L21}  Let $\cl D$ be as in Theorem \ref{t3'}.
Let $C/\cl I$ be a quotient $C^*$-algebra with quotient map $q: C \to C/\cl I$.
Let $E \subset A$  a   f.d. subspace.  
Then 
any $u: E \to C/\cl I$
admits a lifting $\hat u: E \to C$ such that
$$\|\hat u \|_{mM^{\cl D}} = \|  u \|_{mM^{\cl D}} .$$
\end{cor}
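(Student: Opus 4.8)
The plan is to derive this from the local reflexivity principle of Theorem~\ref{t3'} together with Arveson's principle, following the pattern by which lifting theorems are extracted from local reflexivity (compare Proposition~\ref{p17}). Normalizing by homogeneity, I assume $\|u\|_{mM^{\cl D}}\le 1$ and must produce a lifting $\hat u\colon E\to C$ with $\|\hat u\|_{mM^{\cl D}}\le 1$; the reverse inequality $\|\hat u\|_{mM^{\cl D}}\ge\|u\|_{mM^{\cl D}}$ is automatic, since post-composing with the $*$-homomorphism $q$ cannot increase the $mM^{\cl D}$-norm (the map $Id_D\otimes q\colon D\otimes_{\max}C\to D\otimes_{\max}C/\cl I$ is contractive).

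First I would lift at the level of the bidual. The weak*-closure of $\cl I$ in $C^{**}$ is a weak*-closed two-sided ideal, hence of the form $zC^{**}$ for a central projection $z$, and $q^{**}\colon C^{**}\to(C/\cl I)^{**}$ restricts to a normal $*$-isomorphism of $(1-z)C^{**}$ onto $(C/\cl I)^{**}$. Its inverse, viewed as a normal $*$-homomorphism $s\colon(C/\cl I)^{**}\to C^{**}$, satisfies $q^{**}s=Id$. Setting $u''=s\circ u\colon E\to C^{**}$ (with $C/\cl I$ sitting canonically inside its bidual) yields a map with $q^{**}u''=u$; since $s$, composed with the canonical embedding $C/\cl I\to(C/\cl I)^{**}$, is a $*$-homomorphism and hence max-max contractive, I get $\|u''\|_{mM^{\cl D}(E,C^{**})}\le\|u\|_{mM^{\cl D}}\le 1$.

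Next I would bring $u''$ down to $C$ using Theorem~\ref{t3'}. By the contractive inclusion $mM^{\cl D}(E,C^{**})\to mM^{\cl D}(E,C)^{**}$, the map $u''$ lies in the unit ball of $mM^{\cl D}(E,C)^{**}$, so by Goldstine's theorem it is a weak* limit of a net $(\hat u_i)$ in the unit ball of $mM^{\cl D}(E,C)$; concretely $\hat u_i(x)\to u''(x)$ weak* in $C^{**}$ for each $x\in E$. Applying the weak*-continuous map $q^{**}$ and using $q^{**}|_C=q$ together with $q^{**}u''=u$, I obtain $q\hat u_i(x)\to u(x)$ weakly in $C/\cl I$ for every $x\in E$. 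Thus $u$ lies in the closure, for the topology of pointwise weak convergence, of the convex set $\{qv\colon \|v\|_{mM^{\cl D}}\le 1\}$. By Mazur's lemma this convex set has the same pointwise-norm closure, so $u$ is a pointwise-norm limit of maps $w_m=qV_m$, where each $V_m$ is a convex combination of the $\hat u_i$ and hence still satisfies $\|V_m\|_{mM^{\cl D}}\le 1$.

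Finally I would invoke Arveson's principle. The unit ball $\cl F(E,C)=\{v\colon\|v\|_{mM^{\cl D}}\le 1\}$ is admissible: it is bounded in norm, it is pointwise closed (the max-norm is lower semicontinuous on each $D\otimes E$, a finite sum of elementary tensors), and it obeys the required non-commutative convexity, because for $\sigma\in C_+$ with $\|\sigma\|\le 1$ the elements $1\otimes\sigma^{1/2}$ and $1\otimes(1-\sigma)^{1/2}$ form a contractive row and a contractive column whose squares sum to $1\otimes 1$, so the mixture $x\mapsto\sigma^{1/2}f(x)\sigma^{1/2}+(1-\sigma)^{1/2}g(x)(1-\sigma)^{1/2}$ stays in $\cl F(E,C)$ (this is checked inside $D\otimes_{\max}C$). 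Each $w_m$ is $q$-liftable, lifted by $V_m\in\cl F(E,C)$, and $w_m\to u$ pointwise in norm, so Arveson's principle produces a lifting $\hat u\colon E\to C$ of $u$ with $\hat u\in\cl F(E,C)$, i.e. $\|\hat u\|_{mM^{\cl D}}\le 1$; with the trivial reverse inequality this gives the asserted equality. The step I expect to be the main obstacle is the middle one: correctly transporting the weak* convergence through $q^{**}$ and then upgrading pointwise weak to pointwise norm convergence via convexity, together with the verification that the $mM^{\cl D}$-ball really is admissible so that Arveson's principle can be applied to convert the approximate liftings into an exact norm-preserving one.
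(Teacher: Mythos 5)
Your proof is correct and follows essentially the same route as the paper's: lift $u$ to $C^{**}$ via the canonical normal section of $q^{**}$, apply Theorem~\ref{t3'} to approximate by a net in the unit ball of $mM^{\cl D}(E,C)$, upgrade to pointwise norm convergence by Mazur, and conclude with Arveson's principle applied to the admissible class $\{v:\|v\|_{mM^{\cl D}}\le 1\}$. The only difference is that you spell out details the paper leaves implicit (the central-projection description of the bidual section and the verification of admissibility), which are both correct.
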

\begin{proof} 
 Replacing $C/\cl I$ by $(C/\cl I)^{**}$ 
 and using the morphism lifting $(C/\cl I)^{**}$ up to $C^{**}$
 we can lift $u$
 to a map $v: E \to C^{**}$ with $\|v\|_{mM^{\cl D}}= \|  u \|_{mM^{\cl D}}$.
 Then by   Theorem \ref{t3'}
 there is a net $v_i: E \to C$
 with $\|v_i\|_{mM^{\cl D}} \le 1$ tending pointwise weak* to $v$.
 By the usual Mazur argument passing to suitable convex hulls
 we get a net such that $qv_i$ tends pointwise (in norm) to $u$.
 For the final point we use Arveson's principle, observing that
  the classes $$\cl F(E,C)=\{u:E \to C\mid \|u \|_{mM^{\cl D}}\le 1\}$$
 are admissible.
 \end{proof}

 \n\textbf{Acknowledgement.} I am grateful to Jean Roydor
    for stimulating conversations.

\end{document}